\newtheorem{thm}{Theorem}[section]
\newtheorem{cor}[thm]{Corollary}
\newtheorem{lem}[thm]{Lemma}
\newtheorem{prop}[thm]{Proposition}
\newtheorem{conj}[thm]{Conjecture}
\theoremstyle{definition}
\newtheorem{defn}[thm]{Definition}
\newtheorem{rmk}[thm]{Remark}
 \DeclareMathOperator{\Spec}{Spec}
\DeclareMathOperator{\End}{End}
\newcommand{\C}{\ensuremath\mathds{C}}
\newcommand{\Z}{\ensuremath\mathds{Z}}
\newcommand{\Q}{\ensuremath\mathds{Q}}
\newcommand{\PP}{\ensuremath\mathds{P}}
\newcommand{\HH}{\ensuremath\mathrm{H}}
\newcommand{\CH}{\ensuremath\mathrm{CH}}
\newcommand{\Set}[2]{\left\{#1 :#2\right\}}
\begin{document}

\title{Birational Chow--K\"unneth decompositions}
\author{Mingmin Shen}

\thanks{2010 {\em Mathematics Subject Classification.} 14C25, 14H40, 14E08}

\thanks{{\em Key words and phrases.} birational motive, Jacobian variety, hyperK\"ahler variety, cubic threefold, cubic fourfold}

\address{
KdV Institute for Mathematics, University of Amsterdam, P.O.Box 94248, 1090 GE Amsterdam, Netherlands}
\email{M.Shen@uva.nl}

\date{\today}

\begin{abstract} 
We study the notion of a birational Chow--K\"unneth decomposition, which is essentially a decomposition of the integral birational motive of a variety. The existence of a birational Chow--K\"unneth decomposition is stably birationally invariant and this notion refines the Chow theoretical decomposition of the diagonal. We show that a birational Chow--K\"unneth decompostion exists for the following varieties: (a) Jacobian variety; (b) Hilbert scheme of points on a $K3$ surface and (c) The variety of lines on a stably rational cubic threefold or a stably rational cubic fourfold.
\end{abstract}

\maketitle

\section{Introduction}
The concept of a Chow--K\"unneth decomposition was introduced by Murre \cite{murre}. It is very important to the understanding of the motive of a smooth projective variety. Corresponding to a birational motive as was introduced by Kanh and Sujatha \cite{kahn}, there is the notion of a birational Chow--K\"unneth decomposition. In this article, we study the birational Chow--K\"unneth decomposition with integral coefficients from a very down-to-earth point of view. We insist on integral coefficients because we hope that this notion will give a criteria to distinguish special varieties (\textit{e.g.} Jacobian varieties, Hilbert schemes of a $K3$ surface, ...) from their general deformations. We show that this notion has an interesting connection with the rationality problem of cubic fourfolds.

Let $X$ be a smooth projective variety of dimension $d$ over the field $\C$ of complex numbers. The classical K\"unneth formula gives 
\[
[\Delta_X] = \pi^0_{X,\mathrm{hom}} + \pi^1_{X,\mathrm{hom}}+\cdots + \pi^{2d}_{X,\mathrm{hom}}
\]
where $\pi^p_{X,\mathrm{hom}} \in \HH^{2d-p}(X,\Z)\otimes\HH^p(X,\Z)$. An \textit{integral Chow--K\"unneth decomposition} is
\[
\Delta_X = \pi_X^0 +\pi_X^1 +\cdots +\pi_X^{2d},\quad \text{in }\CH^d(X\times X),
\]
such that
\begin{enumerate}
\item The element $\pi^i_X\in \CH^d(X\times X)$, $0\leq i\leq 2d$, lifts the corresponding cohomological projector, namely $[\pi_X^i] = \pi^i_{X,\mathrm{hom}}$ in $\HH^{2d}(X\times X,\Z)$.
\item The collection $\{\pi^i_X\}$ satisfies $\pi^i_X\circ\pi^j_X=0$ for all $0\leq i\neq j\leq 2d$ and $\pi^i_X\circ\pi^i_X = \pi^i_X$ for all $0\leq i\leq 2d$.
\end{enumerate}
If we allow $\Q$-coefficients, then we get the usual \textit{Chow--K\"unneth decomposition}.
\begin{conj}[Murre \cite{murre}] \label{conj murre}
A Chow--K\"unneth decomposition (with $\Q$-coefficients) exists for each $X$.
\end{conj}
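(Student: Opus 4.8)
The plan is to build the projectors from the outside in, attaching each pair $(\pi^i_X,\pi^{2d-i}_X)$ to a geometrically meaningful piece of the cohomology and only then confronting the middle degrees. First I would fix a $0$-cycle $o$ of degree one on $X$ and set
\[
\pi^0_X = [o\times X],\qquad \pi^{2d}_X=[X\times o].
\]
With the K\"unneth convention of the excerpt (so $\pi^{2d-p}_{X,\mathrm{hom}}\in\HH^{p}(X)\otimes\HH^{2d-p}(X)$) one checks directly from the composition law that these are idempotents lifting $\pi^0_{X,\mathrm{hom}}$ and $\pi^{2d}_{X,\mathrm{hom}}$, and that $\pi^0_X\circ\pi^{2d}_X=\pi^{2d}_X\circ\pi^0_X=0$ using $d>0$. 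This disposes of the easy extremes.

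Next I would produce $\pi^1_X$ and $\pi^{2d-1}_X$ out of the Albanese and Picard varieties, following Murre's surface construction in arbitrary dimension. Choosing a smooth complete-intersection curve $C\hookrightarrow X$ whose Jacobian surjects onto $\Alb(X)$, one transports the canonical symmetric idempotent on $\mathrm{Jac}(C)$ back to $X\times X$; after correcting by the already-constructed $\pi^0_X,\pi^{2d}_X$ one obtains mutually orthogonal idempotents $\pi^1_X,\pi^{2d-1}_X$ whose classes are the degree-$1$ and degree-$(2d-1)$ K\"unneth components. At this stage the four outer projectors are orthogonal idempotents and $\pi^2_X+\cdots+\pi^{2d-2}_X:=\Delta_X-\sum_{i\in\{0,1,2d-1,2d\}}\pi^i_X$ is a well-defined idempotent cutting out the ``middle'' motive.

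The hard part is to split this middle idempotent into the individual $\pi^i_X$ for $2\le i\le 2d-2$. Here the natural strategy is Kleiman's symmetrisation: if the hard Lefschetz isomorphism $L^{d-i}\colon\HH^i\xrightarrow{\sim}\HH^{2d-i}$ admitted an algebraic inverse --- the \emph{Lefschetz standard conjecture} --- one could write down algebraic cycles realising the cohomological projectors $\pi^i_{X,\mathrm{hom}}$ symmetrically and then pair them off across the middle degree. This is precisely the input I do not have in general: already the \emph{K\"unneth standard conjecture} is open for arbitrary $X$, so I cannot even guarantee that $\pi^i_{X,\mathrm{hom}}$ is the class of an algebraic correspondence.

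Even granting the standard conjectures, a second obstacle remains: an algebraic correspondence $\gamma$ whose class is idempotent in cohomology need not be idempotent in $\CH^d(X\times X)_\Q$. To upgrade it one wants the kernel of the realisation $\End(\mathfrak h(X))\to\End(\HH^*(X))$ to be a nil-ideal, so that a cohomological idempotent lifts by the usual Peirce/Newton procedure; controlling this kernel is exactly the content of the conjectural Bloch--Beilinson filtration, and this is where I expect the real difficulty to concentrate. Thus my proposal only \emph{reduces} Murre's conjecture to the standard conjectures together with the nilpotence of this kernel, and I would not expect to remove either assumption by elementary means --- consistent with the conjecture being open in general and accessible only for the special classes (curves, surfaces, abelian varieties, complete intersections, and the varieties treated in this paper) where both inputs are available.
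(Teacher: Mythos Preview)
The statement you were asked to prove is not a theorem but a \emph{conjecture}: Murre's conjecture on the existence of a Chow--K\"unneth decomposition with $\Q$-coefficients is open in general, and the paper does not attempt to prove it. It is stated only as motivation, and later the paper remarks that Conjecture~\ref{conj murre} would imply that every smooth projective variety admits a birational Chow--K\"unneth decomposition with rational coefficients. There is therefore no ``paper's own proof'' against which to compare your proposal.

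To your credit, your write-up is honest about this: you correctly construct $\pi^0_X,\pi^{2d}_X,\pi^1_X,\pi^{2d-1}_X$ unconditionally (this much is indeed known, due to Murre and others), and you correctly identify that splitting the remaining middle idempotent requires the K\"unneth and Lefschetz standard conjectures together with a nilpotence statement for the kernel of the cycle class map --- none of which is available in general. So what you have written is not a proof of the conjecture but an accurate summary of the known reduction of Murre's conjecture to other deep open conjectures. That is the right thing to say about this statement, but it should be framed as such rather than as a proof proposal.
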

There are two other formulations of an integral Chow--K\"unneth decomposition. The first one says that the integral Chow motive $\mathfrak{h}(X)$ decomposes as a direct sum of $\mathfrak{h}^i(X) = (X,\pi_X^i)$, $0\leq i\leq 2d$. The second one is a decomposition of $\CH_d(X\times X)$ into a direct sum of sub-algebras. 

To define the notion of a birational Chow--K\"unneth decomposition, we first replace $\CH_d(X\times X)$ by $\CH_0(X_K)$, where $K=\C(X)$ is the function field of $X$. It turns out that the composition operation on $\CH_d(X\times X)$ restricts to a composition on $\CH_0(X_K)$. The diagonal $\Delta_X$ restricts to a unit $\delta_X\in\CH_0(X_K)$. Kahn and Sujatha \cite{kahn} associate a birational motive $\mathfrak{h}^{o}(X)$ to each smooth projective variety $X$. Then the algebra $\langle \CH_0(X_K), \delta_X, \circ\rangle$ is the endormorphism algebra of the birational motive of $X$. The usual cycle class map restricts to a \textit{birational cycle class map} 
\[
[-]: \CH_0(X_K)\longrightarrow \bigoplus_{i=0}^d \End(\HH^{i,0}(X)).
\]
Let $\varpi^i_{X,\mathrm{hom}}$ be the projector onto the factor $\HH^{i,0}(X)$. Then a birational Chow--K\"unneth decomposition is an equation
\[
\delta_X = \varpi_X^0 +\varpi_X^1 + \cdots + \varpi_X^d,\quad \text{in }\CH_0(X_K)
\]
with $\varpi_X^i\in \CH_0(X_K)$, $0\leq i\leq d$, being projectors lifting the corresponding cohomological ones; see Definition \ref{defn birational CK}. In terms of birtional motives, a birational Chow--K\"unneth decomposition is equivalent to a decomposition 
\[
 \mathfrak{h}^o(X) = \mathfrak{h}^{o,0}(X)\oplus \mathfrak{h}^{o,1}(X) \oplus \cdots \oplus \mathfrak{h}^{o,d}(X)
\]
where $\mathfrak{h}^{o,i}(X)=(\mathfrak{h}(X),\varpi_X^i)$. We emphasize that a birational Chow--K\"unneth decomposition has integral coefficients. Occasionally, we may also consider birational Chow--K\"unneth decompositions with other coefficients (\textit{e.g.} $\Q$, $\Z[\frac{1}{2}]$). Conjecture \ref{conj murre} predicts that a birational Chow--K\"unneth decomposition with $\Q$-coefficients exists for each $X$. 

The main results can be summarized as follows.
\begin{enumerate}
\item The existence of a birational Chow--K\"unneth decomposition is stably birationally invariant. (Proposition \ref{prop birational invariance})
\item A Jacobian variety admits a birational Chow--K\"unneth decomposition. (Theorem \ref{thm jac})
\item The Hilbert scheme $S^{[n]}$ of a $K3$-surface $S$ admits a birational Chow--K\"unneth decomposition. (Theorem \ref{thm Hilb})
\item The variety of lines on a smooth cubic threefold or a smooth cubic fourfold admits a birational Chow--K\"unneth decomposition with $\Z[\frac{1}{2}]$-coefficients. (Theorem \ref{thm generic cubic})
\item The variety of lines on a stably rational cubic threefold or a stably rational cubic fourfould admits a birational Chow--K\"unneth decomposition. (Theorem \ref{thm rational cubic})
\end{enumerate}

These results naturally raise the following questions. (a) Does a generic principally polarized abelian variety of dimension at least 4 have a birational Chow--K\"unneth decomposition? (b) Does a generic hyperK\"ahler variety deformation equivalent to $\mathrm{Hilb}^n(K3)$, $n\geq 2$, have a birational Chow--K\"unneth decomposition?

\textit{Achknowledgement}. I would like to thank Charles Vial for helpful discussions.

\section{Composition and product of zero cycles over the function field}

Let $X$ be a smooth projective variety of dimension $d$ over a field $k$. This section is devoted to the study of the algebra $\CH_0(X_K)$ of 0-cycles on $X$ over its function field $K=k(X)$. This algebra is the endomorphism algebra of the birational motive $\mathfrak{h}^o(X)$ of $X$. Most results of this section can essentially be found in \cite{kahn}. However, our approach does not use the notion of a birational motive. We prove, among others, the stably birational invariance of this algebra. 

\begin{lem}\label{lem support lemma}
Let $Y$ and $Z$ be smooth projective varieties. Let $\Gamma_1 \in \CH^r(X \times Y)$ and $\Gamma_2 \in \CH^s(Y\times Z)$ be two correspondences. Then the following statements hold.
\begin{enumerate}[(i)]
\item Assume that $\Gamma_1$ factors through a subset $Y'\subset Y$ of codimension $n$. Then $\Gamma_2\circ \Gamma_1\in \CH^{r+s-d_Y} (X\times Z)$ factors through a subset $Z'\subset Z$ of codimension at least $n+s-d_Y$.
\item Assume that $\Gamma_2$ is supported on $Y'\times Z$ for some closed subset $Y'\subset Y$ of codimension $n$. Then $\Gamma_2\circ\Gamma_1$ is supported on $X'\times Z$ where $X'\subset X$ is a closed subset of codimension $n+r-d_Y$.
\item Assume that $\Gamma_1$ is supported on $X'\times Y$ for some closed subset $X'\subset X$, then $\Gamma_2\circ \Gamma_1$ is supported on $X'\times Z$.
\end{enumerate}
\end{lem}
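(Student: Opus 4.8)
The plan is to reduce all three statements to one geometric inclusion for the support of a composition, and then, where a codimension estimate is needed, to invoke the Chow moving lemma on the relevant smooth projective product. Write $d_Z=\dim Z$, set $W=X\times Y\times Z$, and let $p_{XY},p_{YZ},p_{XZ}$ be the three projections from $W$, so that $\Gamma_2\circ\Gamma_1=p_{XZ*}(p_{XY}^*\Gamma_1\cdot p_{YZ}^*\Gamma_2)$. The first thing I would record is the general inclusion
\[
\Supp(\Gamma_2\circ\Gamma_1)\ \subseteq\ p_{XZ}\big(p_{XY}^{-1}(\Supp\Gamma_1)\cap p_{YZ}^{-1}(\Supp\Gamma_2)\big).
\]
It follows from three standard facts: flat pullback sends a cycle supported on $S$ to one supported on its preimage; the refined product $\alpha\cdot\beta$ on a smooth variety is supported on $\Supp\alpha\cap\Supp\beta$ (being $\delta_W^{!}$ of a class supported on the product of the supports, cf. Fulton, Ch.\ 6 and 8); and proper pushforward sends support into its image. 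Statement (iii) is then immediate and needs no dimension count: if $\Gamma_1$ is supported on $X'\times Y$ then $p_{XY}^{-1}(\Supp\Gamma_1)\subseteq X'\times Y\times Z$, the product is supported there, and $p_{XZ}(X'\times Y\times Z)=X'\times Z$.

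For (ii), set $T=\Supp\Gamma_1\subseteq X\times Y$. The displayed inclusion shows that $\Gamma_2\circ\Gamma_1$ is supported on $\operatorname{pr}_X\!\big(T\cap(X\times Y')\big)\times Z$, where $\operatorname{pr}_X\colon X\times Y\to X$. Since composition of correspondences depends only on the rational equivalence classes, I am free to replace $\Gamma_1$ by a rationally equivalent cycle; by the Chow moving lemma on the smooth projective variety $X\times Y$ I may assume that $T$ meets the fixed closed subset $X\times Y'$, of codimension $n$, properly. Then $\dim\big(T\cap(X\times Y')\big)\le (d+d_Y-r)-n$, so $X':=\overline{\operatorname{pr}_X(T\cap(X\times Y'))}$ has dimension at most $d+d_Y-r-n$, i.e.\ codimension at least $n+r-d_Y$ in $X$, as asserted.

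Statement (i) I would deduce as the transpose of (ii). Using $(\Gamma_2\circ\Gamma_1)^t=\Gamma_1^t\circ\Gamma_2^t$, the hypothesis that $\Gamma_1$ factors through (is supported on) $X\times Y'$ becomes the hypothesis that $\Gamma_1^t$ is supported on $Y'\times X$; applying (ii) to $\Gamma_1^t\circ\Gamma_2^t$ (with middle factor still $Y$, and with $\Gamma_2^t$ in the role of the codimension-$s$ term) yields support on $Z'\times X$ with $\operatorname{codim}_Z Z'\ge n+s-d_Y$, and transposing back gives (i). Alternatively one reruns the argument of (ii) directly, now moving $\Gamma_2$ so that $\Supp\Gamma_2$ meets $Y'\times Z$ properly in $Y\times Z$ and projecting to $Z$.

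The step I expect to be the crux is the codimension estimate in (ii), and hence in (i): the support inclusion by itself yields only a set-theoretic containment, whereas the sharp bound $n+r-d_Y$ requires the intersection $T\cap(X\times Y')$ to have the expected dimension. This is exactly what the moving lemma provides. The one point to verify with care is that replacing the representative of $\Gamma_1$ (respectively $\Gamma_2$) leaves the class $\Gamma_2\circ\Gamma_1$ unchanged, so that the cycle produced after moving genuinely witnesses the claimed support.
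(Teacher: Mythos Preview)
Your proof is correct and follows essentially the same approach as the paper: both arguments rest on the support inclusion for a composed correspondence together with the Chow moving lemma to force a proper intersection, and both note that (i) and (ii) are transposes of one another while (iii) is trivial. The only cosmetic difference is that the paper proves (i) directly (moving $\Gamma_2$ to meet $Y'\times Z$ properly and projecting to $Z$) and deduces (ii) by transpose, whereas you prove (ii) directly (moving $\Gamma_1$) and deduce (i) by transpose; your ``alternatively'' clause for (i) is in fact exactly the paper's argument.
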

\begin{proof}
Note that statement (ii) follows from statement (i) by taking the transpose and that statement (iii) is trivial. Hence we only need to show (i). By definition, the composition $\Gamma_2\circ \Gamma_1$ factors through
\[
 p_Z\big( |\Gamma_2|\cap (Y'\times Z) \big)
\]
where $p_Z:Y\times Z\rightarrow Z$ is the projection and $|\Gamma_2|$ denotes the support of the cycle $\Gamma_2$. By moving the cycle $\Gamma_2$, we may assume that $\Gamma_2$ intersects $Y'\times Z$ properly. Then the above set has codimension at least $n+s-d_Y$.
\end{proof}

The above lemma allows us to compose 0-cycles over function fields.

\begin{lem}\label{lem composing 0-cycles}
Let $X$, $Y$ and $Z$ be smooth projective varieties with function fields $K$, $L$ and $M$ respectively. Then there is a composition
\[
 \CH_0(X_L) \times \CH_0(Y_M) \longrightarrow \CH_0(X_M),\qquad (\gamma_1,\gamma_2)\mapsto (\Gamma_2\circ\Gamma_1)|_{X\times \Spec M},
\]
where $\Gamma_1 \in \CH^{d_X}(X\times Y)$ and $\Gamma_2\in \CH^{d_Y}(Y\times Z)$ are spreadings of $\gamma_1$ and $\gamma_2$ respectively.
\end{lem}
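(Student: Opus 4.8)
The plan is to realise $\CH_0(X_L)$ as a direct limit of Chow groups on $X\times Y$ and then transport the composition of correspondences through this identification, the only genuine issue being independence of the chosen spreadings.

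First I would recall the localization description
\[
\CH_0(X_L) \;=\; \varinjlim_{\emptyset\neq U\subseteq Y} \CH_{d_Y}(X\times U), \qquad L=k(Y),
\]
where $U$ ranges over the nonempty open subsets of $Y$ and the transition maps are the surjective flat restriction maps; here $\CH_{d_Y}(X\times U)=\CH^{d_X}(X\times U)$ since $\dim(X\times Y)=d_X+d_Y$. In particular the term $U=Y$ surjects onto the limit, so every $\gamma_1\in\CH_0(X_L)$ admits a spreading $\Gamma_1\in\CH^{d_X}(X\times Y)$, and likewise $\gamma_2\in\CH_0(Y_M)$ a spreading $\Gamma_2\in\CH^{d_Y}(Y\times Z)$. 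The composition of correspondences then lands in $\CH^{d_X+d_Y-d_Y}(X\times Z)=\CH^{d_X}(X\times Z)=\CH_{d_Z}(X\times Z)$, whose restriction to the generic fibre $X\times\Spec M=X_M$ is a zero-cycle. Thus for each choice of spreadings the formula produces an element $(\Gamma_2\circ\Gamma_1)|_{X\times\Spec M}\in\CH_0(X_M)$; it remains to check this element is independent of the spreadings.

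Since the difference of two spreadings of a fixed $\gamma_i$ is a spreading of $0$, bilinearity of $\circ$ reduces this to showing that the output vanishes whenever $\gamma_1=0$ or $\gamma_2=0$. Suppose first $\gamma_1=0$. Then $\Gamma_1$ restricts to $0$ over the generic point of $Y$, so by the localization sequence $\Gamma_1$ is rationally equivalent to a cycle supported on $X\times Y''$ for some proper closed $Y''\subsetneq Y$, say of codimension $n\geq 1$; that is, $\Gamma_1$ factors through $Y''$. Applying Lemma \ref{lem support lemma}(i) with $r=d_X$ and $s=d_Y$, the composite $\Gamma_2\circ\Gamma_1$ factors through a closed $Z'\subset Z$ of codimension at least $n+s-d_Y=n\geq 1$, hence is supported on $X\times Z'$ with $Z'$ proper; its restriction to $X\times\Spec M$ therefore vanishes. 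Suppose instead $\gamma_2=0$. Then $\Gamma_2$ is rationally equivalent to a cycle supported on $Y\times Z''$ for some proper closed $Z''\subsetneq Z$. Transposing this, the hypothesis of Lemma \ref{lem support lemma}(iii) applies and yields that $\Gamma_2\circ\Gamma_1$ is supported on $X\times Z''$; again its restriction to the generic point of $Z$ is zero.

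The step demanding the most care is the translation ``$\Gamma$ restricts to $0$ over the generic point of the base'' $\Leftrightarrow$ ``$\Gamma$ is rationally equivalent to a cycle supported on $X\times(\text{proper closed})$'', which is exactly the exactness of the localization sequence read off compatibly with the direct limit above. Once this is in place, the support and codimension bounds of Lemma \ref{lem support lemma} together with their transposes do all the work, and the induced pairing on the limits is the asserted composition. I expect no genuine obstacle beyond this bookkeeping: the essential content has already been isolated in Lemma \ref{lem support lemma}, and the present statement merely repackages it as a composition of zero-cycles over function fields.
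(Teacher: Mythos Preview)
Your proof is correct and follows essentially the same approach as the paper's. The paper writes the independence check as a direct expansion $\Gamma'_2\circ\Gamma'_1=\Gamma_2\circ\Gamma_1+\Delta_2\circ\Gamma_1+\Gamma'_2\circ\Delta_1$ and then invokes Lemma~\ref{lem support lemma} to see that the two error terms factor through a divisor of $Z$; your bilinearity reduction to the cases $\gamma_1=0$ and $\gamma_2=0$ is the same argument phrased slightly differently, and your explicit appeal to parts (i) and (iii) of Lemma~\ref{lem support lemma} (the latter via transpose) is exactly what underlies the paper's more terse citation.
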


\begin{proof}
We only need to show that the composition defined above is independent of the choice of the spreadings. Let $\Gamma'_1\in \CH^{d_X}(X\times Y)$ and $\Gamma'_2\in \CH^{d_Y}(Y\times Z)$ be two other spreadings. Then we can write
\[
 \Gamma'_1 = \Gamma_1 + \Delta_1 \qquad\text{and} \qquad \Gamma'_2 = \Gamma_2 + \Delta_2
\]
where $\Delta_1\in \CH^{d_X}(X\times Y)$ is supported on $X\times D_1$ for some divisor $D_1\subset Y$ and $\Delta_2\in \CH^{d_Y}(Y\times Z)$ is supported on $Y\times D_2$ for some divisor $D_2\subset Z$. Then we have
\[
\Gamma'_2\circ \Gamma'_1= \Gamma_2\circ \Gamma_1 + \Delta_2\circ \Gamma_1 + \Gamma'_2\circ \Delta_1.
\]
By Lemma \ref{lem support lemma}, we know that both $\Delta_2\circ\Gamma_1$ and $\Gamma'_2\circ \Delta_1$ factors through a divisor of $Z$. Hence
\[
 (\Gamma'_2\circ\Gamma'_1 )|_{X\times \Spec M} = (\Gamma_2\circ\Gamma_1)|_{X\times \Spec M}, \quad\text{in }\CH_0(X_M).
\]
This proves the lemma.
\end{proof}

\begin{defn}
Let $\gamma_1,\gamma_2\in \CH_0(X_K)$ be two 0-cycles on $X_K$. By Lemma \ref{lem composing 0-cycles} we can define $\gamma = \gamma_1\circ \gamma_2\in\CH_0(X_K)$. The diagonal $\Delta_X$ restricts to an element $\delta_X\in \CH_0(X_K)$ which satisfies $\gamma\circ\delta_X = \delta_X\circ \gamma = \gamma$ for all $\gamma\in \CH_0(X_K)$. This gives a possibly non-commutative $\Z$-algebra structure on $\CH_0(X_K)$. This algebra will be denoted $\langle \CH_0(X_K), \delta_X, \circ \rangle$ or simply $\CH_0(X_K)$ if there is no confusion.
\end{defn} 

Let $X'$ be a smooth projective variety of dimension $d'$ and let $K'=k(X')$ be the corresponding function field. The following proposition is useful to relate $\CH_0(X_K)$ to $\CH_0(X'_{K'})$.

\begin{prop}\label{prop transfer 0 cycles}
Let $\gamma_1\in \CH_0(X_{K'})$ and $\gamma_2\in \CH_0(X'_K)$. There is a well-defined group homomorphism
\[
 \gamma_2\circ(-)\circ \gamma_1: \CH_0(X'_{K'}) \longrightarrow \CH_0(X_K)
\]
given by
\[
\gamma' \mapsto \Gamma_2\circ \Gamma'\circ \Gamma_1|_{X_K},\qquad \forall \gamma'\in\CH_0(X'_{K'}),
\]
where the correspondence $\Gamma'\in \CH_{d'}(X'\times X')$ is a spreading of $\gamma'$, the cycle $\Gamma_1\in \CH_{d'}(X\times X')$ is a spreading of $\gamma_1$ and the cycle $\Gamma_2\in \CH_d(X'\times X)$ is a spreading of $\gamma_2$.
\end{prop}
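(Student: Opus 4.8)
The plan is to follow the pattern of Lemma~\ref{lem composing 0-cycles}. The map is additive in $\gamma'$ once the three spreadings are fixed, because composition of correspondences is bilinear and restriction to the generic fibre is linear; so the entire content of the statement is \emph{well-definedness}, i.e.\ independence of the chosen spreadings $\Gamma_1$, $\Gamma'$, $\Gamma_2$. The guiding principle is that restriction to the generic fibre kills vertical cycles: the surjection $\CH_d(X\times X)\to \CH_0(X_K)$ has kernel generated by the classes supported on $X\times D$ with $D\subset X$ a divisor in the second (target) factor, and dually any two spreadings of one fixed $0$-cycle differ by such a vertical class in the appropriate product. Concretely, writing $\Gamma_1'=\Gamma_1+\Delta_1$, $\Gamma''=\Gamma'+\Delta'$, $\Gamma_2'=\Gamma_2+\Delta_2$ for second choices, I record that $\Delta_1$ is supported on $X\times D_1$ with $D_1\subset X'$ a divisor, $\Delta'$ on $X'\times E$ with $E\subset X'$ a divisor, and $\Delta_2$ on $X'\times D_2$ with $D_2\subset X$ a divisor, the divisor always lying in the target factor.

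First I would telescope the difference of the two triple compositions, changing one spreading at a time, so that it suffices to prove that each of the error terms
\[
\Delta_2\circ\Gamma''\circ\Gamma_1',\qquad \Gamma_2\circ\Delta'\circ\Gamma_1',\qquad \Gamma_2\circ\Gamma'\circ\Delta_1
\]
restricts to $0$ in $\CH_0(X_K)$. I will show the stronger statement that each is supported on $X\times D$ for some divisor $D\subset X$ of the target factor, which forces the restriction to vanish. The term with $\Delta_2$ is immediate: $\Delta_2\in\CH_d(X'\times X)$ has its target component inside $D_2\subset X$, and having target in a fixed closed subset is preserved under precomposition (the transpose of Lemma~\ref{lem support lemma}(iii)), so $\Delta_2\circ\Gamma''\circ\Gamma_1'$ is supported on $X\times D_2$.

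The remaining two terms require pushing a codimension-one support condition through a composition, where I invoke Lemma~\ref{lem support lemma}(i). For $\Gamma_2\circ\Gamma'\circ\Delta_1$: the factor $\Delta_1$ factors through the codimension-one subset $D_1\subset X'$ of its target, so, since $\Gamma'\in\CH^{d'}(X'\times X')$ has $s=d'=d_{X'}$, part~(i) shows $\Gamma'\circ\Delta_1$ factors through a subset of $X'$ of codimension $\ge 1+d'-d'=1$; applying part~(i) again against $\Gamma_2\in\CH^{d'}(X'\times X)$ makes $\Gamma_2\circ\Gamma'\circ\Delta_1$ factor through a codimension-$\ge 1$ subset of the target $X$, hence supported on $X\times D$. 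For $\Gamma_2\circ\Delta'\circ\Gamma_1'$: since $\Delta'$ has target in $E\subset X'$, the composite $\Delta'\circ\Gamma_1'$ is supported on $X\times E$ (again the transpose of~(iii)), so it factors through the divisor $E$, and one final application of part~(i) against $\Gamma_2$ pushes the support into a divisor of the target $X$. Thus all three error terms die after restriction, the map is well defined, and the homomorphism property follows formally.

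The delicate point is the bookkeeping in the last paragraph: one must keep straight which factor of each product carries the vertical divisor and check that the estimate $n+s-d_Y$ of Lemma~\ref{lem support lemma}(i) remains $\ge 1$ throughout the iterated composition. This is exactly where it matters that each $\Gamma_i$ spreads a $0$-cycle, which forces $s=d_Y$ at every step so that no codimension is lost; this is the only genuine obstacle, the rest being formal.
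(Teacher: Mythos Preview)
Your argument is correct and follows the same support-tracking idea as the paper, but you have unpacked what the paper does in one line: the paper simply observes that the proposition is a direct consequence of Lemma~\ref{lem composing 0-cycles}, applied twice (first to compose $\gamma_1$ with $\gamma'$, then to compose the result with $\gamma_2$), so well-definedness of each binary step gives well-definedness of the triple. Your telescoping and repeated use of Lemma~\ref{lem support lemma} is exactly the content of that lemma's proof carried out for the triple composition directly.
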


\begin{proof}
This is a direct consequence of Lemma \ref{lem composing 0-cycles}.
\end{proof}

\begin{cor}\label{cor generic finite}
Let $f:X'\dashrightarrow X$ be a generically finite rational map between smooth projective varieties. 
\begin{enumerate}[(i)]
\item there is a homomorphism $f^\#: \CH_0(X_K)\longrightarrow \CH_0(X'_{K'})$ defined by
\[
 f^\# \gamma = f^*\circ \gamma \circ f_* 
\]
where $f_*$ is the closure $\bar{\Gamma}_f$ of the graph $\Gamma_f$ restricted to $X'\times \Spec K$ and $f^*$ is the restriction of the transpose ${}^t\bar{\Gamma}_f$ to $X\times \Spec K'$. Moreover,
\[
 f^\#\gamma_1 \circ f^\#\gamma_2 = \deg(f)\, f^\#(\gamma_1\circ\gamma_2)
\]
for all $\gamma_1,\gamma_2\in\CH_0(X_K)$.

\item There is a homomorphism $f_{\#}: \CH_0(X'_{K'})\longrightarrow \CH_0(X_K)$ defined by 
\[
 f_{\#}\gamma' = f_* \circ \gamma' \circ f^*.
\]
If $f$ is of degree one (i.e. birational), then
\[
 f_{\#}\gamma'_1 \circ f_{\#}\gamma'_2 = f_{\#}(\gamma'_1\circ\gamma'_2)
\]
for all $\gamma'_1,\gamma'_2\in \CH_0(X'_{K'})$. 

\item If $f$ is birational, then $f^\#$ and $f_\#$ are inverse to each other and both are isomorphisms of $\CH_0(X_K)$ and $\CH_0(X'_{K'})$ as algebras.
\end{enumerate}
\end{cor}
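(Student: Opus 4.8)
The plan is to reduce all three parts to two composition identities for the graph correspondence, after noting that the maps themselves require no separate construction. Write $d=\dim X=\dim X'$ (equal since $f$ is generically finite), let $\bar\Gamma_f\in\CH_d(X'\times X)$ be the closure of the graph, and let ${}^t\bar\Gamma_f\in\CH_d(X\times X')$ be its transpose; by definition $f_*\in\CH_0(X'_K)$ and $f^*\in\CH_0(X_{K'})$ are their restrictions to the relevant generic fibres. Then $f^\#=f^*\circ(-)\circ f_*$ is exactly the transfer of Proposition \ref{prop transfer 0 cycles} attached to the pair $(\gamma_1,\gamma_2)=(f_*,f^*)$, and $f_\#=f_*\circ(-)\circ f^*$ is the transfer attached to $(f^*,f_*)$. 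Hence both are well-defined group homomorphisms with no further work. I would also record that $\circ$ is associative across the several varieties involved, which follows from associativity of correspondence composition together with Lemma \ref{lem support lemma}, exactly as in the proof of Lemma \ref{lem composing 0-cycles}.

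The crux is the pair of identities
\[
 f_*\circ f^* = \deg(f)\,\delta_X, \qquad f^*\circ f_* = \delta_{X'},
\]
the first holding in $\CH_0(X_K)$ for every generically finite $f$, the second holding in $\CH_0(X'_{K'})$ when $f$ is birational. To prove them I would compute the two composites of the spreadings and keep only their generic fibres. The composite $\bar\Gamma_f\circ{}^t\bar\Gamma_f\in\CH_d(X\times X)$, over the generic point $\eta\in X$, is supported on the $\deg(f)$ preimages of $\eta$ sent back to $\eta$, so it equals $\deg(f)\,\Delta_X$ there; any remaining components lie over a proper closed subset of the second factor $X$ and thus vanish after restriction to $\Spec K$, giving the first identity for all $f$. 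The composite ${}^t\bar\Gamma_f\circ\bar\Gamma_f\in\CH_d(X'\times X')$ is represented by the fibre product $X'\times_X X'$, whose generic fibre over the second factor consists of all points of $X'$ sharing an image with a given generic point; for birational $f$ this is a single diagonal point, yielding $\delta_{X'}$, whereas for $\deg(f)>1$ the off-diagonal points survive over the generic point and do not vanish --- which is precisely why birationality is required.

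Granting these identities, the three parts are formal. For (i), associativity and the first identity give
\[
 f^\#\gamma_1\circ f^\#\gamma_2 = f^*\circ\gamma_1\circ(f_*\circ f^*)\circ\gamma_2\circ f_* = \deg(f)\,f^*\circ(\gamma_1\circ\gamma_2)\circ f_* = \deg(f)\,f^\#(\gamma_1\circ\gamma_2).
\]
For (ii), with $f$ birational the middle factor is $f^*\circ f_*=\delta_{X'}$ and the same computation yields $f_\#\gamma'_1\circ f_\#\gamma'_2=f_\#(\gamma'_1\circ\gamma'_2)$. For (iii), using both identities with $\deg(f)=1$,
\[
 f_\#(f^\#\gamma) = (f_*\circ f^*)\circ\gamma\circ(f_*\circ f^*) = \gamma, \qquad f^\#(f_\#\gamma') = (f^*\circ f_*)\circ\gamma'\circ(f^*\circ f_*) = \gamma',
\]
so $f^\#$ and $f_\#$ are mutually inverse, and by (i) and (ii) they are algebra homomorphisms; hence they are inverse algebra isomorphisms.

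The main obstacle is the second identity, or rather the clean control of which excess components of the two composites are supported over a divisor of the relevant factor: restriction to the generic point annihilates exactly those, and the content is that this removes every non-diagonal contribution to $f_*\circ f^*$ independently of $\deg(f)$, but does so for $f^*\circ f_*$ only in the birational case. Making this precise is a proper-intersection argument for the graph closure, entirely in the spirit of Lemma \ref{lem support lemma}.
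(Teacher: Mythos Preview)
Your proposal is correct and follows essentially the same approach as the paper: both proofs reduce everything to the two identities $f_*\circ f^*=\deg(f)\,\delta_X$ and, in the birational case, $f^*\circ f_*=\delta_{X'}$, then derive (i)--(iii) by associativity. The paper's justification for the second identity is the equation ${}^t\bar\Gamma_f\circ\bar\Gamma_f=\Delta_{X'}+\Gamma'$ with $\Gamma'$ supported on $D\times D$ for a divisor $D\subset X'$, which is exactly the ``excess components live over a divisor'' claim you describe; your explanation of why this fails for $\deg(f)>1$ (the off-diagonal part of the fibre product then dominates the second factor) is a helpful addition that the paper leaves implicit.
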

\begin{proof}
For \textit{(i)}, we only need to verify the formula for composition, which follows easily from the fact that $f_*\circ f^* =\deg(f)$. For \textit{(ii)}, we oberserve that, when $f$ is birational, we have
\[
 {}^t\bar{\Gamma}_f\circ \bar{\Gamma}_f = \Delta_{\tilde{X}} + \Gamma',
\]
for some $\Gamma'$ supported on $D\times D$ with $D\subset X'$ being a divisor. By restricting to $X'_{K'}$, we obtain $f^*\circ f_* = \delta_{X'}$ in $\CH_0(X'_{K'})$. The last statement follows from the extra relation $f^\#\delta_X = \delta_{X'}$ and $f_\# \delta_{X'} = \delta_X$.
\end{proof}

Let $X$ and $Y$ be smooth projective varieties with function fields $K$ and $L$ respectively. Let $\gamma\in \CH_0(X_K)$ and $\gamma'\in \CH_0(Y_L)$. We can spread these cycles and get $\Gamma\in \CH_{d_X}(X\times X)$ and $\Gamma'\in\CH_{d_Y}(Y\times Y)$. Then we can define
\[
 \Gamma\otimes\Gamma' := p_{13}^*\Gamma \cdot p_{24}^*\Gamma' \in \CH_{d_X + d_Y}(X\times Y\times X\times Y).
\]
We define
\[
 \gamma\otimes \gamma' :=(\Gamma\otimes\Gamma')|_{X\times Y \times\eta_{X\times Y}}\in \CH_0\big( (X\times Y)_{K\otimes L} \big)
\]
as the \textit{product} of $\gamma$ and $\gamma'$. It is clear that this definition is independent of the choice of the spreadings. Thus we have a bilinear map
\begin{equation}\label{eq tensor product}
\otimes : \CH_0(X_K)\times \CH_0(Y_L) \longrightarrow \CH_0 \big( (X\times Y)_{K\otimes L} \big).
\end{equation}

\begin{prop}\label{prop product commutes with composition}
Let $X$ and $Y$ be smooth projective varieties with function fields $K$ and $L$ respectively. Then
\[
(\gamma_1\otimes\gamma'_1) \circ (\gamma_2\otimes\gamma'_2) = (\gamma_1\circ\gamma_2) \otimes (\gamma'_1\circ\gamma'_2),\quad \text{in }\CH_0 \big( (X\times Y)_{K\otimes L} \big),
\]
for all $\gamma_1,\gamma_2\in \CH_0(X_K)$ and $\gamma'_1,\gamma'_2\in\CH_0(Y_L)$.
\end{prop}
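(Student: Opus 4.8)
The plan is to lift the identity to the level of honest correspondences on the products, where it becomes the standard fact that the external product of correspondences is compatible with composition, and only then to restrict to the generic point. First I would choose spreadings $\Gamma_1,\Gamma_2\in\CH^{d_X}(X\times X)$ of $\gamma_1,\gamma_2$ and $\Gamma'_1,\Gamma'_2\in\CH^{d_Y}(Y\times Y)$ of $\gamma'_1,\gamma'_2$. By the very definition of the product map \eqref{eq tensor product}, the external products $\Gamma_1\otimes\Gamma'_1$ and $\Gamma_2\otimes\Gamma'_2$, viewed as self-correspondences of $X\times Y$, are spreadings of $\gamma_1\otimes\gamma'_1$ and $\gamma_2\otimes\gamma'_2$. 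Likewise $\Gamma_1\circ\Gamma_2$ and $\Gamma'_1\circ\Gamma'_2$ are spreadings of $\gamma_1\circ\gamma_2$ and $\gamma'_1\circ\gamma'_2$, so that $(\Gamma_1\circ\Gamma_2)\otimes(\Gamma'_1\circ\Gamma'_2)$ is a spreading of $(\gamma_1\circ\gamma_2)\otimes(\gamma'_1\circ\gamma'_2)$. Both sides of the claimed identity are therefore obtained by restricting an explicit correspondence on $(X\times Y)\times(X\times Y)$ to the generic point $\eta_{X\times Y}$ of the target factor; since this restriction is well defined independently of the spreadings (Lemma \ref{lem composing 0-cycles} and the remark following \eqref{eq tensor product}), it suffices to prove the identity
\[
(\Gamma_1\otimes\Gamma'_1)\circ(\Gamma_2\otimes\Gamma'_2) = (\Gamma_1\circ\Gamma_2)\otimes(\Gamma'_1\circ\Gamma'_2) \quad\text{in } \CH_{d_X+d_Y}\big((X\times Y)\times(X\times Y)\big).
\]

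The core of the argument is this correspondence-level identity, which asserts that the external tensor product endows correspondences with a monoidal structure for which composition is functorial. To prove it I would write out both compositions using $\beta\circ\alpha = p_{AC*}(p_{AB}^*\alpha\cdot p_{BC}^*\beta)$ on the triple product $(X\times Y)^{\times 3}$, and compare with the corresponding expressions on $X^{\times 3}$ and $Y^{\times 3}$. Ordering the six factors of $(X\times Y)^{\times 3}$ as three copies of $X$ interleaved with three copies of $Y$, the pullbacks $p_{ij}^*$ of the external products unfold into pullbacks of the $\Gamma$'s along the $X$-projections and of the $\Gamma'$'s along the $Y$-projections; by flat base change and the projection formula the intersection product on the sextuple product factors into the product of the intersection on $X^{\times 3}$ with that on $Y^{\times 3}$; and the final pushforward likewise factors as the product of the two pushforwards defining $\Gamma_1\circ\Gamma_2$ and $\Gamma'_1\circ\Gamma'_2$. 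Assembling these yields exactly $(\Gamma_1\circ\Gamma_2)\otimes(\Gamma'_1\circ\Gamma'_2)$.

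The main obstacle is purely bookkeeping: one must check that the projections from $(X\times Y)^{\times 3}$ genuinely decompose compatibly into projections from $X^{\times 3}$ and from $Y^{\times 3}$, and that the relevant squares are Cartesian so that flat base change lets $p^*$ of a product equal the product of the $p^*$'s. Once the projections are organized correctly, the identity follows from the standard compatibilities of proper pushforward, flat pullback, and intersection product, with no moving of cycles required. Restricting the resulting equality to $\eta_{X\times Y}$ then gives the proposition.
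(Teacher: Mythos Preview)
Your proposal is correct and follows exactly the same approach as the paper: lift to spreadings, verify the identity $(\Gamma_1\otimes\Gamma'_1)\circ(\Gamma_2\otimes\Gamma'_2)=(\Gamma_1\circ\Gamma_2)\otimes(\Gamma'_1\circ\Gamma'_2)$ for correspondences, and then restrict to the generic point of $X\times Y$. The paper simply asserts that the correspondence-level identity is ``easily checked,'' whereas you have written out the bookkeeping with flat base change and the projection formula; your version is just a more detailed rendering of the same argument.
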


\begin{proof}
We easily check that the same equation holds when all cycles are replaced by their spreadings. Then we restrict the equation to the generic point of $X\times Y$.
\end{proof}

The special case of $Y=\PP^1_k$ is of particular interest. Note that $\CH_0(\PP^1_{k(\PP^1)})$ is free of rank one with generator $x\times \eta_{\PP^1}$ where $x\in\PP^1$ is a closed point. In this case we have the following proposition.

\begin{prop}\label{prop stable birational CH0}
Let $X$ be a smooth projective variety of dimension $d$ with function field $K$.
\begin{enumerate}[(i)]
\item The homomorhism
\[
\varphi_X : \CH_0(X_K)\longrightarrow \CH_0\big( (X\times \PP^1)_{K\otimes k(\PP^1)} \big), \qquad
\gamma \mapsto \gamma\otimes (x\times {\eta_{\PP^1}}),
\]
is an isomorphism which is compatible with the algebra structures. Namely,
\begin{align*}
\varphi_X(\delta_X) & = \delta_{X\times\PP^1}\quad\text{and}\\
 \varphi_X(\gamma_1\circ\gamma_2) & = \varphi_X(\gamma_1)\circ\varphi_X(\gamma_2), \quad \text{in } \CH_0\big( (X\times\PP^1)_{K\otimes k(\PP^1)} \big),
 \end{align*}
for all $\gamma_1,\gamma_2\in \CH_0(X_K)$.

\item The algebra $\langle \CH_0(X_K), \delta_X, \circ \rangle$ is a stably birational invariant.
\end{enumerate}
\end{prop}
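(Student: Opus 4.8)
The plan is to treat the two parts separately, with part (ii) following formally from part (i) together with Corollary \ref{cor generic finite}.

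For the algebra compatibility in (i), I would first observe that the generator $x\times\eta_{\PP^1}$ of $\CH_0(\PP^1_{k(\PP^1)})$ is nothing but the unit $\delta_{\PP^1}$: both are represented by a rational point of $\PP^1$ over $k(\PP^1)$, and on $\PP^1$ over any field all rational points are rationally equivalent, so $\CH_0(\PP^1_{k(\PP^1)})=\Z\cdot\delta_{\PP^1}$ and in particular $\delta_{\PP^1}\circ\delta_{\PP^1}=\delta_{\PP^1}$. Granting this, the identity $\varphi_X(\delta_X)=\delta_X\otimes\delta_{\PP^1}=\delta_{X\times\PP^1}$ is the statement that the diagonal of a product is the product of the diagonals, which one checks on spreadings where $\Delta_{X\times\PP^1}=p_{13}^*\Delta_X\cdot p_{24}^*\Delta_{\PP^1}$. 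The multiplicativity $\varphi_X(\gamma_1\circ\gamma_2)=\varphi_X(\gamma_1)\circ\varphi_X(\gamma_2)$ is then immediate from Proposition \ref{prop product commutes with composition}: writing $\delta_{\PP^1}=\delta_{\PP^1}\circ\delta_{\PP^1}$ gives $(\gamma_1\circ\gamma_2)\otimes(\delta_{\PP^1}\circ\delta_{\PP^1})=(\gamma_1\otimes\delta_{\PP^1})\circ(\gamma_2\otimes\delta_{\PP^1})$.

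The substantive point in (i) is that $\varphi_X$ is bijective. Here I would identify the target with $\CH_0(X_{K(t)})$, where $F:=k(X\times\PP^1)=K(t)$ is the relevant function field and $(X\times\PP^1)_F=X_F\times_F\PP^1_F$ with $X_F=X_{K(t)}$. The projective bundle formula over $F$ gives $\CH_0(X_F\times_F\PP^1_F)\cong\CH_0(X_F)$, the isomorphism being $\alpha\mapsto\xi\cdot q^*\alpha$, i.e. pushforward along a fibre $X_F\times\{\mathrm{pt}\}$; under this identification $\varphi_X$ becomes precisely the flat base-change map $\CH_0(X_K)\to\CH_0(X_{K(t)})$, since $\gamma\otimes(x\times\eta_{\PP^1})$ is supported over the point $x$ of the $\PP^1$-factor. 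Thus $\varphi_X$ is an isomorphism if and only if base change along the purely transcendental extension $K\hookrightarrow K(t)$ is an isomorphism on $\CH_0$. To prove this I would invoke the projective bundle formula once more, now for one-cycles on $\bar X:=X\times_K\PP^1_K$, giving $\CH_1(\bar X)\cong q^*\CH_0(X_K)\oplus\xi\cdot q^*\CH_1(X_K)$, together with the localization sequence identifying $\CH_0(X_{K(t)})=\varinjlim_U\CH_1(X\times_K U)$ with $\CH_1(\bar X)$ modulo the classes of cycles supported on fibres over closed points of $\PP^1_K$. Restriction to the generic fibre kills the $\xi\cdot q^*\CH_1(X_K)$ summand and should send $q^*\CH_0(X_K)$ isomorphically onto $\CH_0(X_{K(t)})$; concretely, surjectivity follows by spreading a zero-cycle over an open $U\subset\PP^1_K$ and restricting to a $K$-rational point, while injectivity follows by spreading a rational equivalence over $\PP^1_K$ and specializing. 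I expect this injectivity — that a zero-cycle which becomes rationally trivial over $K(t)$ is already rationally trivial over $K$ — to be the main obstacle, since it is exactly the point where one must verify that the fibre contributions in the localization sequence meet $q^*\CH_0(X_K)$ trivially.

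Finally, for (ii), recall that $X$ and $X'$ are stably birational when $X\times\PP^m$ is birational to $X'\times\PP^n$ for some $m,n$. Iterating (i) yields an algebra isomorphism between $\CH_0(X_K)$ and the corresponding algebra attached to $X\times(\PP^1)^m$; since $(\PP^1)^m$ is birational to $\PP^m$, Corollary \ref{cor generic finite}(iii) identifies the latter with the algebra attached to $X\times\PP^m$, and similarly for $X'$. A birational map $X\times\PP^m\dashrightarrow X'\times\PP^n$ then induces an algebra isomorphism between the algebras of $X\times\PP^m$ and $X'\times\PP^n$ by Corollary \ref{cor generic finite}(iii); chaining these isomorphisms gives $\CH_0(X_K)\cong\CH_0(X'_{K'})$ as algebras, which is the asserted invariance.
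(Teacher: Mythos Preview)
Your treatment of the algebra compatibility and of part (ii) matches the paper's exactly: both identify $x\times\eta_{\PP^1}$ with $\delta_{\PP^1}$ and invoke Proposition~\ref{prop product commutes with composition}, and both deduce (ii) by combining (i) with Corollary~\ref{cor generic finite}(iii).

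For bijectivity of $\varphi_X$ your route diverges from the paper's, though both rest on the projective bundle formula. You first identify the target with $\CH_0(X_{K(t)})$ and then reduce to the statement that $\CH_0$ is invariant under the purely transcendental extension $K\hookrightarrow K(t)$, handling injectivity by specialization at a $K$-point of $\PP^1_K$; your anticipated ``main obstacle'' dissolves once one notes that $\iota^*\xi=0$ for any fibre inclusion $\iota$, so the fibre contributions in the localization sequence land entirely in the $\xi\cdot q^*\CH_1(X_K)$ summand and miss $q^*\CH_0(X_K)$. The paper instead applies the projective bundle decomposition directly to $\CH_{d+1}(X\times\PP^1\times X\times\PP^1)$, splitting it into four summands of which only $\CH_d(X\times X)\otimes(x\times\PP^1)$ survives restriction to the generic point; this gives surjectivity at once. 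For injectivity the paper argues concretely with spreadings: if $\varphi_X(\gamma)=0$ then $p_{13}^*\Gamma\cdot p_2^*x$ is supported on $X\times\PP^1\times D$ for some divisor $D\subset X\times\PP^1$, and intersecting with $p_4^*y$ for general $y\in\PP^1$ and pushing down to $X\times X$ forces $\Gamma$ itself to be zero or supported over a divisor in $X$. Your reduction is cleaner conceptually and isolates a reusable lemma about $\CH_0$ under transcendental base change; the paper's argument is more self-contained and avoids any discussion of localization or specialization maps.
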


\begin{proof}
We first note that a 0-cycle on $(X\times \PP^1)_{K\otimes k(\PP^1)}$ is obtained as the restriction of a $(d+1)$-cycle on $X\times\PP^1\times X\times \PP^1$. One computes
\begin{align*}
 \CH_{d+1}(X\times\PP^1 \times X\times \PP^1) = & \CH_{d+1}(X\times X) \otimes (x,x) \\
 &\oplus \CH_d(X\times X) \otimes (x\times \PP^1) \\
 & \oplus \CH_d(X\times X)\otimes (\PP^1\times x)\\
 &\oplus \CH_{d-1}(X\times X) \otimes (\PP^1\times \PP^1)
\end{align*}
In the above equation, only the summand $\CH_d(X\times X)\otimes (x\times \PP^1)$ may have nonzero restriction to $(X\times \PP^1)_{K\otimes k(\PP^1)}$ and the restriction factors through $\CH_0(X_K)\otimes (x\times \eta_{\PP^1})$. This shows that $\varphi_X$ is surjective. Let $\gamma\in \CH_0(X_K)$ be a cycle such that $\varphi_X(\gamma)=0$. Take a spreading $\Gamma\in \CH_d(X\times X)$ of $\gamma$.
Then $p_{13}^*\Gamma \cdot p_2^*x \in \CH_{d+1}(X\times\PP^1\times X\times\PP^1)$ is supported on $X\times \PP^1\times D$ for some divisor $D\subset X\times \PP^1$. If $D\rightarrow \PP^1$ does not hit a general point $y\in\PP^1$, then 
\[
\Gamma = p_{13,*}(p_{13}^*\Gamma \cdot p_2^*x\cdot p_4^*y) = 0,\quad\text{in }\CH_d(X\times X).
\]
If $D\rightarrow \PP^1$ is dominant, then
\[
\Gamma = p_{13,*}(p_{13}^*\Gamma \cdot p_2^*x\cdot p_4^*y)
\]
is supported on $X\times D_y$, where $D_y\subset X$ is a divisor. In either case we have $\gamma = 0$ in $\CH_0(X_K)$ and hence $\varphi_X$ is also injective. It follows that $\varphi_X$ is an isomorphism. To show the compatibility with composition, we note that $x\times \eta_{\PP^1} = \delta_{\PP^1}$ in $\CH_0(\PP^1_{k(\PP^1)})$. For arbitrary $\gamma_1,\gamma_2\in \CH_0(X_K)$, we have
\begin{align*}
\varphi_X(\gamma_1)\circ \varphi_X(\gamma_2) & = (\gamma_1\otimes \delta_{\PP^1})\circ (\gamma_2\otimes\delta_{\PP^1} )\\
 & = (\gamma_1\circ\gamma_2)\otimes (\delta_{\PP^1}\circ \delta_{\PP^1})\\
 & = (\gamma_1\circ\gamma_2)\otimes (\delta_{\PP^1})\\
  & = \varphi_X(\gamma_1\circ \gamma_2).
\end{align*}
We also easily check $\varphi_X(\delta_X) = \delta_X\otimes\delta_{\PP^1} = \delta_{X\times \PP^1}$.

It is clear from (iii) of Corollary \ref{cor generic finite} that the algebra $\CH_0(X_K)$ is a birational invariant. Then (i) implies that it is also a stably birational invariant.
\end{proof}

\section{Birational Chow--K\"unneth decomposition}
In this section we define the notion of a birational Chow--K\"unneth decomposition and study some of its basic properties: stable birational invariance and the induced decomposition of $\CH_0$.

Assume that $k=\C$. Let $\gamma\in \CH_0(X_K)$. Take $\Gamma\in \CH_d(X\times X)$ to be a spreading of $\gamma$, \textit{i.e.} $\Gamma|_{X \times \eta} = \gamma$. Then the homomorphism
\[
[\Gamma]_*: \HH^{i,0}(X) \rightarrow \HH^{i,0}(X),\quad \alpha \mapsto [\Gamma]_*\alpha=p_{2,*}([\Gamma]\cup p_1^*\alpha)
\]
is independent of the choice of the spreading, where $i=0,\ldots,d$. Indeed, if $\Gamma'$ is a different spreading, then the difference $\Gamma'-\Gamma$ is supported on $X \times D$ for some divisor $D\subset X$. Thus the action of this difference is trivial on $\HH^{i,0}(X)$. In this way we get a well-defined \textit{birational cycle class map}
\begin{equation}\label{eq birational cycle class}
[-]: \CH^d(X_K)\rightarrow \bigoplus_{i=0}^d \End(\HH^{i,0}(X)),
\end{equation}
which respects the algebra structures, namely
\[
[\delta_X]=\mathrm{id}\quad \text{and}\quad [\gamma_1\circ \gamma_2] = [\gamma_1]\circ [\gamma_2],\quad \text{for all } \gamma_1,\gamma_2\in \CH_0(X_K).
\]
Let 
$$
\varpi^p_{X,\mathrm{hom}}\in \bigoplus_{i=0}^d \End(\HH^{i,0}(X))
$$ 
be the projector onto $\HH^{p,0}(X)$. Then it is clear that
\[
 [\delta_X] =\mathrm{id} = \sum_{i=0}^d \varpi_{X,\mathrm{hom}}^i.
\]

\begin{defn}\label{defn birational CK}
Let $X$ be a smooth projective variety of dimension $d$ over the field $\C$ of the complex numbers. Let $K=\C(X)$ be function field of $X$. A \textit{birational Chow--K\"unneth decomposition} of $X$ is a decomposition
\begin{equation}\label{eq birational CK}
 \delta_X = \sum_{i=0}^d\varpi_X^i,\quad \text{in }\CH_0(X_{K})
\end{equation}
such that the $\varpi_X^i\in \CH_0(X_K)$ are projectors lifting the cohomological ones in the sense that
\begin{enumerate}[(i)]
\item $\varpi_X^i\circ \varpi_X^j = 0$, for all $i\neq j$;
\item $\varpi_X^i\circ\varpi_X^i = \varpi_X^i$, for all $i=0,1,\ldots,d$;
\item $[\varpi_X^i] = \varpi_{X,\mathrm{hom}}^i$, for all $i=0,1,\ldots,d$.
\end{enumerate}
\end{defn}

\begin{rmk}\label{rmk algebra decomposition}
It is fairly straightforward to see that a birational Chow--K\"unneth decomposition is equivalent to a  decomposition
\[
\CH_0(X_K) = \bigoplus_{i=0}^d A_i
\]
such that (1) each $A_i$ is an algebra with a right unit $\varpi_X^i$ and $\delta_X = \sum \varpi_X^i$ is a decomposition of $\delta_X$ into a sum of projectors; (2) the birational cycle class map $[-]$ restricts to an algebra homomorphism $[-]: A_i\rightarrow \End(\HH^{i,0}(X))$ that respects the units on the two sides. The decomposition of an element is simply given by $\gamma = \sum \gamma\circ\varpi_X^i$; see Proposition \ref{prop algebra decomposition}. We will also call such a decomposition a birational Chow--K\"unneth decomposition. 
\end{rmk}

A similar definition can be made when the base field $k$ is arbitrary. In that case we replace $\HH^{i,0}(X)$ by $\HH^i(X)/\mathrm{N}^1H^i(X)$ where $\HH^i(-)$ is an appropriate Weil cohomology and $\mathrm{N}^1\HH^i(X)$ consists of cohomology classes that are supported on divisors. This is the first quotient associated to the coniveau filtration on cohomology. The Generalized Hodge Conjecture \cite{gro hdg} predicts that the two definitions agree when $k=\C$.

Note that the decomposition in the above definition is with integral coefficients. Hence we do not expect that all varieties $X$ have a birational Chow--K\"unneth decomposition. Note that Conjecture \ref{conj murre} implies that every smooth projective variety admits a birational Chow--K\"unneth decomposition with rational coefficients.

The variety $X$ has a \textit{Chow-theoretical decomposition of the diagonal} if
\[
 \Delta_X = x\times X + \Gamma, \quad\text{in }\CH_d(X\times X)
\]
where $x\in X$ is a closed point and $\Gamma$ is supported on $X\times D$ for some divisor $D$; see \cite{voisin invent}. The birational Chow--K\"unneth decomposition refines this notion.

\begin{prop}
The variety $X$ has a Chow-theoretical decomposition of the diagonal if and only if it has a birational Chow--K\"unneth decomposition of the form $\delta_X = x_K$, for some closed point $x\in X$.\hfill $\square$
\end{prop}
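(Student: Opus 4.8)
The plan is to translate both conditions into equivalent statements about $d$-cycles on $X\times X$ and to read off that they coincide. Recall that $\delta_X$ and $x_K$ are the restrictions to the generic fibre $X\times\eta$ of the diagonal $\Delta_X$ and of the cycle $x\times X$ respectively, so these two cycles are canonical spreadings of $\delta_X$ and $x_K$. By the localization sequence for Chow groups, $\CH_0(X_K)=\varinjlim_U \CH_d(X\times U)$ over open subsets $U$ of the second factor, so two $d$-cycles restrict to the same class over $\eta$ exactly when their difference is supported on $X\times D$ for some divisor $D\subset X$ in the second factor. Consequently the single equality $\delta_X=x_K$ in $\CH_0(X_K)$ is equivalent to an identity $\Delta_X = x\times X + \Gamma$ with $\Gamma$ supported on $X\times D$, which is precisely the Chow-theoretical decomposition of the diagonal.

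This dictionary immediately gives one implication: a birational Chow--K\"unneth decomposition of the stated form asserts in particular that $\delta_X=x_K$, and spreading this equality produces the required $\Gamma$. For the reverse implication, starting from a Chow decomposition $\Delta_X = x\times X + \Gamma$ I would set $\varpi_X^0:=x_K$ and $\varpi_X^i:=0$ for $i\geq 1$; restricting the identity to $X\times\eta$ gives $\delta_X=x_K=\sum_i\varpi_X^i$. Conditions (i) and (ii) of Definition~\ref{defn birational CK} are then automatic, since at most $\varpi_X^0$ is nonzero and $\varpi_X^0=x_K=\delta_X$ is the unit of $\CH_0(X_K)$, hence idempotent.

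The only step with genuine content is condition (iii), which is where the classical consequence $\HH^{i,0}(X)=0$ for $i\geq 1$ must appear. A direct computation using $[x\times X]=p_1^*[\mathrm{pt}]$ shows that $[x\times X]_*$ is the identity on $\HH^{0,0}(X)$ and vanishes on $\HH^{i,0}(X)$ for $i\geq 1$, that is, $[x_K]=\varpi^0_{X,\mathrm{hom}}$. Applying the action $[-]_*$ on $\bigoplus_i\HH^{i,0}(X)$ to the equation $\Delta_X=x\times X+\Gamma$, and using that any cycle supported on $X\times D$ acts trivially there (the fact already used to make the birational cycle class map well-defined), I obtain $\mathrm{id}=\varpi^0_{X,\mathrm{hom}}$, whence $\HH^{i,0}(X)=0$ for all $i\geq 1$. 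Then $[\varpi_X^0]=\varpi^0_{X,\mathrm{hom}}$ while $[\varpi_X^i]=0=\varpi^i_{X,\mathrm{hom}}$ for $i\geq 1$, so (iii) holds and $\{\varpi_X^i\}$ is a birational Chow--K\"unneth decomposition of the stated form.

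I expect no serious obstacle: the argument is a dictionary between the two notions. The points needing care are the localization bookkeeping that converts the single $0$-cycle equality into the diagonal identity, in particular keeping track that $D$ lives in the second factor, and the verification that the trivial choice $\varpi_X^i=0$ for $i\geq 1$ is forced to be cohomologically compatible, which is exactly where the vanishing $\HH^{i,0}(X)=0$ for $i\geq 1$ enters.
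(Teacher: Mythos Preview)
Your proof is correct and is precisely the straightforward dictionary argument one expects; the paper itself omits the proof entirely (the statement is marked with $\square$), so there is nothing to compare against beyond noting that your write-up supplies the details the author deemed obvious.
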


To justify the term ``birational", we have the following.
\begin{prop}\label{prop birational invariance}
Let $X$ and $X'$ be smooth projective varieties over $\C$ that are stably birational to each other. Then $X$ has a birational Chow--K\"unneth decomposition if and only if $X'$ has one.
\end{prop}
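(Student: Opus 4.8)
The plan is to produce, from the stable birational equivalence, an algebra isomorphism $\Phi\colon \CH_0(X_K)\xrightarrow{\sim}\CH_0(X'_{K'})$ sending $\delta_X$ to $\delta_{X'}$, and then to check that $\Phi$ is compatible with the birational cycle class map \eqref{eq birational cycle class}. Conditions (i) and (ii) of Definition \ref{defn birational CK} and the relation $\delta_X=\sum_i\varpi_X^i$ are statements purely about the algebra $\langle\CH_0(X_K),\delta_X,\circ\rangle$, so any unit-preserving algebra isomorphism automatically carries a system of orthogonal idempotents summing to the unit to another such system. Hence the only delicate point is the cohomological normalization (iii), and the whole argument reduces to understanding how $\Phi$ acts on $\bigoplus_i\End(\HH^{i,0})$.

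To construct $\Phi$, recall that stable birationality means there are integers $m,n\geq 0$ and a birational map $g\colon X\times\PP^m\dashrightarrow X'\times\PP^n$. Since $X\times\PP^m$ is birational to $X\times(\PP^1)^m$, iterating Proposition \ref{prop stable birational CH0}(i) gives a unit-preserving algebra isomorphism $\varphi\colon\CH_0(X_K)\xrightarrow{\sim}\CH_0((X\times\PP^m)_\bullet)$, and similarly $\varphi'\colon\CH_0(X'_{K'})\xrightarrow{\sim}\CH_0((X'\times\PP^n)_\bullet)$; Corollary \ref{cor generic finite}(iii) gives a unit-preserving algebra isomorphism $g_\#$ between the two middle terms. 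Then $\Phi=(\varphi')^{-1}\circ g_\#\circ\varphi$ is the desired isomorphism and $\Phi(\delta_X)=\delta_{X'}$ (this is exactly the composite used to prove Proposition \ref{prop stable birational CH0}(ii)). By the first paragraph, $\{\Phi(\varpi_X^i)\}$ already satisfies (i), (ii) and sums to $\delta_{X'}$.

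The crux is to verify that $\Phi$ respects (iii), and I would treat the two moves separately. For the $\PP^1$-move, the K\"unneth formula and the vanishing $\HH^{q,0}(\PP^1)=0$ for $q>0$ identify $\HH^{i,0}(X\times\PP^1)$ with $\HH^{i,0}(X)$ for every $i$; since the spreading of $\varphi_X(\gamma)=\gamma\otimes\delta_{\PP^1}$ acts on cohomology as $[\Gamma]_*\otimes[\Delta_{\PP^1}]_*$ with $[\Delta_{\PP^1}]_*$ the identity on $\HH^{0,0}(\PP^1)$, the map $\varphi_X$ is compatible with $[-]$ under this grading-preserving identification. For the birational move, holomorphic forms are birational invariants, so $g$ induces for each $i$ an isomorphism $\HH^{i,0}(X\times\PP^m)\xrightarrow{\sim}\HH^{i,0}(X'\times\PP^n)$; assembling these over $i$ gives a grading-preserving isomorphism $\phi$, and functoriality of the correspondence action—using once more that the divisor-supported error term in ${}^t\bar\Gamma_g\circ\bar\Gamma_g$ is invisible to $[-]$—yields $[g_\#\gamma]=\phi\circ[\gamma]\circ\phi^{-1}$. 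Because a grading-preserving isomorphism sends the projector onto $\HH^{i,0}$ to the projector onto $\HH^{i,0}$, we conclude $[\Phi(\varpi_X^i)]=\varpi^i_{X',\mathrm{hom}}$, which is (iii); by symmetry the construction is reversible, giving the stated equivalence.

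I expect the main obstacle to be exactly this cohomological bookkeeping: one must confirm that each graph-closure correspondence and the $\PP^1$-product operator induces a grading-preserving map on $\bigoplus_i\HH^{i,0}$, and that the various divisor-supported correction terms appearing in Corollary \ref{cor generic finite} and Proposition \ref{prop stable birational CH0} act trivially on the spaces $\HH^{i,0}$. Everything formal—idempotency, orthogonality, and completeness of the decomposition—follows immediately once $\Phi$ is known to be a unit-preserving algebra isomorphism.
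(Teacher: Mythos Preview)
Your proposal is correct and follows essentially the same route as the paper: build a unit-preserving algebra isomorphism out of Corollary~\ref{cor generic finite}(iii) for the birational step and Proposition~\ref{prop stable birational CH0}(i) for the $\PP^1$-step, then verify that each piece intertwines the birational cycle class maps via the grading-preserving identifications $\HH^{i,0}(X)\cong\HH^{i,0}(X')$. The only cosmetic difference is that the paper phrases the birational step as invariance under a single smooth blow-up/blow-down (implicitly invoking weak factorization), whereas you apply Corollary~\ref{cor generic finite}(iii) directly to an arbitrary birational map; your route is slightly cleaner since the cohomological compatibility $[\rho_\#\gamma]=\rho_*\circ[\gamma]\circ\rho^*$ holds for any birational $\rho$ between smooth projective varieties and no factorization is needed.
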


\begin{proof}
We first establish the birational invariance. For this, we only need to show that the existence of a birational Chow--K\"unneth decomposition is invariant under a smooth blow up and blow down. Let $X$ be a smooth projective variety and $Y\hookrightarrow X$ be a smooth closed subvariety and let $\rho:\tilde{X}\rightarrow X$ be the blow up of $X$ at center $Y$. We first observe that $\rho^*: \HH^{i,0}(X)\rightarrow \HH^{i,0}(\tilde{X})$ is an isomorphism with inverse given by $\rho_*:  \HH^{i,0}(\tilde{X})\rightarrow \HH^{i,0}({X})$. These induce algebra isomorphisms
\[
[\rho_\#]: \End(\HH^{i,0}(X'))\longrightarrow \End(\HH^{i,0}(X))\quad \text{and}\quad 
[\rho^\#]: \End(\HH^{i,0}(X))\longrightarrow \End(\HH^{i,0}(X')),
\]
which are the cohomological versions of the corresponding maps of Corollary \ref{cor generic finite}. Thus we get a commutative diagram of algebra homomorphisms
\[
\xymatrix{
\CH_0(\tilde{X}_K)\ar[rr]^{[-]}\ar[d]_{\rho_\#} &&\bigoplus_i\End(\HH^{i,0}(\tilde{X}))\ar[d]^{[\rho_\#]} \\
\CH_0({X}_K)\ar[rr]^{[-]} &&\bigoplus_i\End(\HH^{i,0}(\tilde{X}))
}
\]
where the vertical arrows are isomorphisms. It follws that $\CH_0(\tilde{X}_K)$ has a birational Chow--K\"unneth decomposition if and only if $\CH_0(X_K)$ has one.

We move to the invariance under taking product with $\PP^1$. We first note that
\[
 \HH^{i,0}(X\times \PP^1) = \HH^{i,0}(X)\otimes \HH^{0,0}(\PP^1) = \HH^{i,0}(X).
\]
This identification gives rise to a canonical isomorphism of algebras
\[
[\varphi_X]: \End(\HH^{i,0}(X)) \longrightarrow \End(\HH^{i,0}(X\times \PP^1)), 
\]
which is the cohomological version of the  $\varphi_X$ in Proposition \ref{prop stable birational CH0}. Thus we have a commutative diagram
\[
\xymatrix{
\CH_0(X_K)\ar[rr]^{[-]}\ar[d]_{\varphi_X} &&\bigoplus_i\End(\HH^{i,0}(X))\ar[d]^{[\varphi_X]} \\
\CH_0\big( (X\times \PP^1)_{K\otimes \C(\PP^1)} \big)\ar[rr]^{[-]} &&\bigoplus_i\End(\HH^{i,0}(X\times\PP^1))
}
\]
where the vertical arrows are again isomorphisms. Thus $\CH_0(X_K)$ has a Chow--K\"unneth decomposition if and only if $\CH_0\big( (X\times\PP^1)_{K\otimes\C(\PP^1)} \big)$ has one.
\end{proof}

\begin{rmk}\label{rmk explicit CK}
The relation of the projectors can be explicitly written. In the case of blow up, we have $\varpi_{\tilde{X}}^i = \rho^{\#}\varpi_X^i$ and $\varpi_X^i = \rho_\# \varpi_{\tilde{X}}^i$. In the case of taking product with $\PP^1$ we have $\varpi_{X\times\PP^1}^i = \varpi_X^i\otimes \delta_{\PP^1}$.
\end{rmk}

\begin{prop}\label{prop CK of prod}
Let both $X$ and $Y$ be smooth projective varieties with a birational Chow--K\"unneth decomposition. Then $X\times Y$ also admits a birational Chow--K\"unneth decomposition.
\end{prop}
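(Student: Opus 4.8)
The plan is to build the projectors on $X\times Y$ out of those on the two factors by means of the product map \eqref{eq tensor product}. Write the given decompositions as $\delta_X = \sum_{i=0}^{d_X}\varpi_X^i$ and $\delta_Y = \sum_{j=0}^{d_Y}\varpi_Y^j$, set $d=d_X+d_Y$, and recall that the product $\otimes$ of \eqref{eq tensor product} takes values in $\CH_0\big((X\times Y)_M\big)$, where $M=\C(X\times Y)$ is the function field of $X\times Y$ (this being what the subscript $K\otimes L$ abbreviates, the restriction in the definition of $\gamma\otimes\gamma'$ being to the generic point $\eta_{X\times Y}$). For each $0\le n\le d$ I would define
\[
\varpi_{X\times Y}^n := \sum_{i+j=n}\varpi_X^i\otimes\varpi_Y^j \in \CH_0\big((X\times Y)_M\big),
\]
and claim that $\delta_{X\times Y}=\sum_{n=0}^d\varpi_{X\times Y}^n$ is a birational Chow--K\"unneth decomposition of $X\times Y$.

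First I would record the compatibility with units. Since $\Delta_{X\times Y}=p_{13}^*\Delta_X\cdot p_{24}^*\Delta_Y$ as cycles on $(X\times Y)\times(X\times Y)$, restricting to the generic point gives $\delta_{X\times Y}=\delta_X\otimes\delta_Y$ (this identity appears, for $Y=\PP^1$, in the proof of Proposition \ref{prop stable birational CH0}). Bilinearity of $\otimes$ then yields
\[
\sum_{n=0}^d\varpi_{X\times Y}^n = \Big(\sum_{i=0}^{d_X}\varpi_X^i\Big)\otimes\Big(\sum_{j=0}^{d_Y}\varpi_Y^j\Big) = \delta_X\otimes\delta_Y = \delta_{X\times Y},
\]
so the candidate projectors do sum to the identity.

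The orthogonality and idempotency relations are a formal consequence of Proposition \ref{prop product commutes with composition}. Indeed,
\[
\varpi_{X\times Y}^m\circ\varpi_{X\times Y}^n = \sum_{\substack{i+j=m\\ k+l=n}}(\varpi_X^i\circ\varpi_X^k)\otimes(\varpi_Y^j\circ\varpi_Y^l),
\]
and since $\varpi_X^i\circ\varpi_X^k$ vanishes unless $i=k$, in which case it equals $\varpi_X^i$, and likewise for $Y$, only the terms with $i=k$ and $j=l$ survive; these force $m=n$. Hence the right-hand side is $0$ when $m\neq n$, and when $m=n$ it equals $\sum_{i+j=n}\varpi_X^i\otimes\varpi_Y^j=\varpi_{X\times Y}^n$. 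This gives properties (i) and (ii) of Definition \ref{defn birational CK}.

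The one step requiring genuine input is condition (iii), that $[\varpi_{X\times Y}^n]$ is the cohomological projector onto $\HH^{n,0}(X\times Y)$. Here I would invoke the K\"unneth decomposition for holomorphic forms, $\HH^{n,0}(X\times Y)=\bigoplus_{i+j=n}\HH^{i,0}(X)\otimes\HH^{j,0}(Y)$, together with the fact that a spreading $\Gamma\otimes\Gamma'=p_{13}^*\Gamma\cdot p_{24}^*\Gamma'$ acts on cohomology as the external product, so that $[\gamma\otimes\gamma']_*$ restricts on $\HH^{i,0}(X)\otimes\HH^{j,0}(Y)$ to $[\gamma]_*\otimes[\gamma']_*$. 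Since $[\varpi_X^i]=\varpi^i_{X,\mathrm{hom}}$ acts as the identity on $\HH^{i,0}(X)$ and by zero on $\HH^{i',0}(X)$ for $i'\neq i$, and similarly for $Y$, the class $[\varpi_X^i\otimes\varpi_Y^j]$ is exactly the projector onto the summand $\HH^{i,0}(X)\otimes\HH^{j,0}(Y)$; summing over $i+j=n$ identifies $[\varpi_{X\times Y}^n]$ with the projector onto $\HH^{n,0}(X\times Y)$, which is $\varpi^n_{X\times Y,\mathrm{hom}}$. The main obstacle is thus purely this compatibility of the birational cycle class map with the product $\otimes$; once the external-product formula for the action of $\Gamma\otimes\Gamma'$ on cohomology is in hand, everything else is the formal bookkeeping above.
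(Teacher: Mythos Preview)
Your proposal is correct and follows essentially the same approach as the paper: define $\varpi_{X\times Y}^n=\sum_{i+j=n}\varpi_X^i\otimes\varpi_Y^j$, use the K\"unneth formula for $\HH^{n,0}(X\times Y)$, and invoke Proposition \ref{prop product commutes with composition} for the projector relations. The paper's proof is in fact considerably terser than yours---it simply writes down the K\"unneth decomposition and the formula for $\varpi_{X\times Y}^i$ and asserts the result---so your version supplies the details (summation to $\delta_{X\times Y}$, orthogonality, and the cohomological compatibility of $\otimes$) that the paper leaves implicit.
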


\begin{proof}
The K\"unneth formula implies 
\[
 \HH^{i,0}(X\times Y) = \bigoplus_{p+q=i}\HH^{p,0}(X)\otimes \HH^{q,0}(Y).
\]
Given birational Chow--K\"unneth decompositions $\varpi_X^p$, $0\leq p \leq d_X$, and $\varpi_Y^q$, $0\leq q \leq d_Y$, of $X$ and $Y$ respectively. Then
\[
 \varpi_{X\times Y}^i : = \sum_{p+q=i}\varpi_X^p\otimes\varpi_Y^q,\quad 0\leq i\leq d_X+d_Y,
\]
form a birational Chow--K\"unneth decomposition of $X\times Y$.
\end{proof}

Note that any element $\gamma\in \CH_0(X_K)$ defines a homomorphism
\[
\gamma^*: \CH_0(X) \rightarrow \CH_0(X), \qquad \alpha\mapsto \gamma^*\alpha := p_{1,*}(\Gamma\cdot p_2^*\alpha),
\]
where $\Gamma\in \CH_d(X\times X)$ is a spreading of $\gamma$. As a consequence, associated to a birational Chow--K\"unneth decomposition \eqref{eq birational CK}, we have a decomposition
\[
 \CH_0(X) = \bigoplus_{i=0}^d \CH_0(X)_i,\qquad \text{where }\CH_0(X)_i := (\varpi_X^i)^*\CH_0(X). 
\]
Moreover, this construction can be carried out universally in the following sense. Let $L$ be a field extension of $\C$. The diagonal $\Delta_{X_L}\subset X_L\times_L X_L$ gives rise to the element $\delta_{X_L}\in \CH_0(X_{K\otimes_{\C} L})$. By base change the equation \eqref{eq birational CK}, we have
\[
 \delta_{X_L} = \sum_{i=0}^d \varpi_{X_L}^i,\qquad \text{where }\varpi_{X_L}^i = \varpi_X^i \otimes_{\C}L. 
\]
Then we have the decomposition
\begin{equation}\label{eq universal decomp}
\CH_0(X_L) = \bigoplus_{i=0}^d \CH_0(X_L)_i, \quad\text{where } \CH_0(X_L)_i:=(\varpi_{X_L}^i)^*\CH_0(X_L).
\end{equation}

\begin{prop}\label{prop algebra decomposition}
Under the decomposition as above, we have
\[
 \varpi_X^i \in \CH_0(X_K)_i
\]
for all $i=0,\ldots,d$. Each group $\CH_0(X_K)_i$
is an algebra with a right unit and the birational cycle class map restricts to a homomorphism of algebras $[-]: \CH_0(X_K)_i\rightarrow \End(\HH^{i,0}(X))$.
\end{prop}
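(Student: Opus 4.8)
The plan is to reduce everything to a single geometric identity — that the self-action $(-)^*$ of $\CH_0(X_K)$ on itself agrees with $\circ$-multiplication — and then to run pure idempotent calculus using properties (i)--(iii) of Definition \ref{defn birational CK}. So the first and only substantial step I would carry out is the key identity: for all $\mu,\gamma\in\CH_0(X_K)$,
\[
(\mu_{X_K})^*\gamma=\gamma\circ\mu\quad\text{in }\CH_0(X_K),
\]
where $(\mu_{X_K})^*$ denotes the base change to $L=K$ of the endomorphism $\mu^*$ of $\CH_0(X)$. To prove it I would choose spreadings $M,\Gamma\in\CH_d(X\times X)$ of $\mu$ and $\gamma$, pass to the triple product $X\times X\times X$, and observe that by construction $(\mu_{X_K})^*\gamma$ is the restriction to the generic point of the third factor of a cycle of the shape $p_{13,*}(p_{12}^*M\cdot p_{23}^*\Gamma)$ (up to transposing which factor carries the ``value'' and which the ``parameter''). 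This is precisely the formula defining $\circ$ on $0$-cycles in Lemma \ref{lem composing 0-cycles}, so the restriction equals $\gamma\circ\mu$; the independence of the chosen spreadings is handled exactly as there, the difference terms being supported over a divisor and dying after restriction. This recovers the description already announced in Remark \ref{rmk algebra decomposition}. I expect the bookkeeping of the three factors (and the correct left/right placement) to be the only delicate point; everything after it is formal.

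Granting the key identity, the three claims follow by idempotent calculus. Taking $\mu=\varpi_X^i$ identifies $\CH_0(X_K)_i=(\varpi_{X_K}^i)^*\CH_0(X_K)$ with the right ideal $\CH_0(X_K)\circ\varpi_X^i$. Since $\varpi_X^i\circ\varpi_X^i=\varpi_X^i$, we get $\varpi_X^i=(\varpi_{X_K}^i)^*\varpi_X^i\in\CH_0(X_K)_i$, proving the first assertion. This right ideal is closed under $\circ$, because $(\alpha\circ\varpi_X^i)\circ(\beta\circ\varpi_X^i)=(\alpha\circ\varpi_X^i\circ\beta)\circ\varpi_X^i$ again lies in $\CH_0(X_K)\circ\varpi_X^i$, and $\varpi_X^i$ is a right unit since $(\alpha\circ\varpi_X^i)\circ\varpi_X^i=\alpha\circ\varpi_X^i$; this gives the algebra-with-right-unit claim. (The ambient direct sum \eqref{eq universal decomp} itself comes from $\sum_i(\varpi_{X_K}^i)^*=(\delta_X)^*=\mathrm{id}$ together with the orthogonality $(\varpi_{X_K}^i)^*(\varpi_{X_K}^j)^*=0$ for $i\neq j$, both immediate from the key identity and properties (i)--(ii).)

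For the last assertion I would use that the birational cycle class map $[-]$ is already an algebra homomorphism on all of $\CH_0(X_K)$, hence restricts to one on the subalgebra $\CH_0(X_K)_i$. For $\gamma\circ\varpi_X^i$ one computes $[\gamma\circ\varpi_X^i]=[\gamma]\circ[\varpi_X^i]=[\gamma]\circ\varpi_{X,\mathrm{hom}}^i$; since every element of the image of $[-]$ is block-diagonal for the grading $\bigoplus_i\HH^{i,0}(X)$ and $\varpi_{X,\mathrm{hom}}^i$ is the projector onto $\HH^{i,0}(X)$, this endomorphism is supported on the $i$-th factor, so the restricted map indeed takes values in $\End(\HH^{i,0}(X))$. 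Finally the right unit $\varpi_X^i$ maps to $[\varpi_X^i]=\varpi_{X,\mathrm{hom}}^i=\mathrm{id}_{\HH^{i,0}(X)}$, so the two units match, completing the proof. The main obstacle throughout is thus isolated in the key identity; the rest is bookkeeping with orthogonal idempotents.
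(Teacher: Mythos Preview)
Your proposal is correct and follows essentially the same route as the paper: both arguments hinge on the key identity $(\varpi_{X_K}^j)^*\gamma=\gamma\circ\varpi_X^j$, obtained by spreading to $\CH_d(X\times X)$ and computing on the triple product, after which the rest is the same orthogonal-idempotent bookkeeping. If anything, your treatment of the final claim about the restricted cycle class map landing in $\End(\HH^{i,0}(X))$ and matching units is more explicit than the paper's proof, which leaves that part to the reader.
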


\begin{proof}
Let $\Pi_X^i\subset X\times X$ be a spreading of $\varpi_X^i$. The $\Pi_X^i\otimes K\subset X\times X\times \Spec K$ is a spreading of $\varpi^i_{X_K}$. Then we have
\begin{align*}
( \varpi^j_{X_K})^*\varpi^i_{X} & = p_{13,*}(p_{12}^*\Pi_X^j\cdot p_{23}^*\Pi^i_X)|_{X\times \eta_X}\\
 & = (\varpi_X^i\circ\varpi_X^j)\\
 & =
 \begin{cases}
 0, &\text{if }i\neq j\\
 \varpi_{X}^i, & \text{if }i=j.
 \end{cases}
\end{align*}
This exactly means that $\varpi_X^i \in \CH_0(X_K)_i$, $0\leq i\leq d$. We need to show that $\CH_0(X_K)_i$ inherits the structure of an algebra with $\varpi_X^i$ being a right unit. Actually, the above computation also shows that $(\varpi^j_{X_K})^*\gamma = \gamma\circ \varpi_X^j$ for all $\gamma\in \CH_0(X_K)$. Let $\gamma\circ\varpi_X^i$ and $\gamma'\circ\varpi_X^i$ be two elements in $\CH_0(X_K)_i$, then
\[
(\gamma\circ\varpi_X^i)\circ(\gamma'\circ\varpi_X^i) = (\gamma\circ\varpi_X^i\circ\gamma')\circ\varpi_X^i \in \CH_0(X_K)_i.
\]
This shows that $\CH_0(X_K)_i$ is a sub-algebra with right unit $\varpi_X^i$.
\end{proof}

\section{Jacobian varieties}

In this section we show that each Jacobian variety admits a birational Chow-K\"unneth decomposition. This is done by first proving that for the symmetric products of a curve and then using the stably birational invariance of the existence of a birational Chow--K\"unneth decomposition.

Let $C$ be a smooth projective curve of genus $g$. Let $n$ be a positive integer. We use $C^{[n]}$ to denote the Hilbert scheme of length-$n$ subschemes of $C$, which is the same as the $n$-fold symmetric product of $C$. For each $\xi\in C^{[n]}$, we use $Z_{\xi}\subset C$ to denote the corresponding close subscheme. For any positive integer $r$, let 
\[
C^{[n-r,n]} =\Set{(\xi_1,\xi_2)\in C^{[n-r]} \times C^{[n]}}{Z_{\xi_1}\subset Z_{\xi_2}},
 \] 
 viewed as a correspondence from $C^{[n-r]}$ to $C^{[n]}$. Its transpose will be denoted $C^{[n,n-r]}$. Using the notation of \cite{rennemo}, we write
 \[
  \mu_{+}(C,n) = C^{[n,n+1]}\in \CH_{n+1}(C^{[n]} \times C^{[n+1]})\quad \text{and}\quad \mu_{-}(C,n) = C^{[n,n-1]}\in \CH_n(C^{[n]} \times C^{[n-1]}).
 \]
 After fixing a point $x_0\in C$, we can define a morphism
 \[
  \iota_n: C^{[n]}\longrightarrow C^{[n+1]},\qquad \xi \mapsto \xi+x_0.
 \]
 Such a morphism induces two other correspondences
 \[
  \mu_+(x_0, n) = (\iota_n)_* \quad \text{and} \quad \mu_{-}(x_0,n) = (\iota_{n-1})^*.
 \]
In practice, we simply write $\mu_{\pm}(C)$ and $\mu_{\pm}(x_0)$ with the correct dimension being understood. 
\begin{prop}[Rennemo \cite{rennemo}]
The correspondences $\mu_{\pm}(x_0)$ and $\mu_{\pm}(C)$ satisfy the commutation relations
\begin{align*}
[\mu_{-}(x_0),\mu_{+}(C)] & = \Delta_{C^{[n]}},\\
[\mu_{-}(C), \mu_{+}(x_0)] & = \Delta_{C^{[n]}}
\end{align*}
and all other possible commutators vanish.\hfill $\square$
\end{prop}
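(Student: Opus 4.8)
All four operators are incidence correspondences, and the plan is to compute each composite by the push–pull formula $\Gamma_2\circ\Gamma_1=p_{13,*}(p_{12}^*\Gamma_1\cdot p_{23}^*\Gamma_2)$ on the relevant triple product of symmetric powers. The two operators $\mu_{\pm}(x_0)$ are (transposes of) graphs of the closed immersions $\iota_m$, so composing a correspondence with $\mu_{+}(x_0)$ or $\mu_{-}(x_0)$ amounts to a refined Gysin pullback along $\iota_m\times\mathrm{id}$ or $\mathrm{id}\times\iota_m$; since $C^{[m]}$ is smooth this pullback is computed by honest scheme-theoretic intersection. Thus I would realize each composite as the class of an explicit incidence scheme inside $C^{[n]}\times C^{[n]}$, decompose its support into irreducible components, and read off the generic multiplicity along each.

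For the first relation, substituting the middle variable $\xi'=\iota_n(\xi_2)=\xi_2+x_0$ into $C^{[n,n+1]}$ shows that $\mu_-(x_0)\circ\mu_+(C)$ is supported on
\[
J=\Set{(\xi_1,\xi_2)\in C^{[n]}\times C^{[n]}}{Z_{\xi_1}\subset Z_{\iota_n(\xi_2)}}.
\]
As effective divisors the relation $\xi_1\le\xi_2+x_0$ is realized in exactly two ways: either the residual point is $x_0$, forcing $\xi_1=\xi_2$ (the diagonal $\Delta$), or it is a point of $\xi_2$, forcing $x_0\in Z_{\xi_1}$ and $\xi_2=(\xi_1-x_0)+y$ for a free $y\in C$ (a component $W$). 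Both components have dimension $n$, so $J=\Delta\cup W$. On the other hand $\mu_+(C)\circ\mu_-(x_0)$ first removes $x_0$ and then adds a free point, hence is supported precisely on $W$. Granting multiplicity one along each component, I obtain $\mu_-(x_0)\circ\mu_+(C)=\Delta+W$ and $\mu_+(C)\circ\mu_-(x_0)=W$, so $[\mu_-(x_0),\mu_+(C)]=\Delta_{C^{[n]}}$. The second relation follows by the transposed computation, with $J$ replaced by $\Set{(\xi_1,\xi_2)}{Z_{\xi_2}\subset Z_{\iota_n(\xi_1)}}=\Delta\cup W^{t}$.

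The remaining commutators vanish, for three different reasons. Adding a free point and adding $x_0$ are commuting operations on effective divisors, so $\mu_+(x_0)\circ\mu_+(C)$ and $\mu_+(C)\circ\mu_+(x_0)$ are the same reduced incidence cycle $\Set{(\xi_1,\xi_3)}{\xi_1+x_0\le\xi_3}$; hence $[\mu_+(C),\mu_+(x_0)]=0$, and dually $[\mu_-(C),\mu_-(x_0)]=0$. For $[\mu_+(C),\mu_-(C)]$ both orderings are supported on the locus $V$ of pairs sharing a common subscheme of length $n-1$, which is irreducible of dimension $n+1$; the diagonal, where the two orderings might a priori differ, has dimension $n$ and so is too small to contribute a top-dimensional component, whence both composites equal $[V]$ and the commutator vanishes. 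The subtle case is $[\mu_+(x_0),\mu_-(x_0)]$, where the two composites have excess intersection along the diagonal: as operators they are cup product with $c_1(N_{\iota_n})=\iota_n^*[E_{n+1}]$ and with $[E_n]$ respectively, where $E_m=\iota_{m-1}(C^{[m-1]})=\Set{\xi}{x_0\in Z_\xi}$. These agree because pulling back $[E_{n+1}]$ along the addition map $s\colon C\times C^{[n]}\to C^{[n+1]}$ gives $[\{x_0\}\times C^{[n]}]+[C\times E_n]$, and restricting to $\{x_0\}\times C^{[n]}$ kills the first term (a fiber of $\mathrm{pr}_1$ has trivial self-intersection) and returns $[E_n]$ from the second; thus $\iota_n^*[E_{n+1}]=[E_n]$ and the commutator vanishes.

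The main obstacle is the multiplicity-one assertion underlying the first two relations (and the reducedness of $[V]$), i.e. that $C^{[n,n+1]}$ meets the image of $\mathrm{id}\times\iota_n$ generically transversally along each of $\Delta$ and $W$. I would verify this by a tangent-space computation at a generic point of each component, where $\xi_1,\xi_2$ consist of distinct points (with $x_0$ occurring as a single point of $\xi_1$ along $W$); there all the symmetric powers and incidence varieties are smooth and the transversality is a direct check. The non-generic strata — where points of $\xi$ collide or meet $x_0$ — form proper closed subsets of each component and therefore do not affect the generic multiplicities, which are all that enter the cycle classes. Once transversality is established, the three displayed identities follow by subtraction.
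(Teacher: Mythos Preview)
The paper does not prove this proposition: it is quoted from Rennemo and marked with a $\square$, so there is no argument to compare against. Your proposal, by contrast, supplies a direct proof, and the strategy is sound. Realizing each composite as an explicit incidence subscheme of $C^{[n]}\times C^{[n]}$, decomposing its support into irreducible components, and reading off the generic multiplicities is precisely how such identities are established; your decomposition $J=\Delta\cup W$ for $\mu_-(x_0)\circ\mu_+(C)$ and the identification of $\mu_+(C)\circ\mu_-(x_0)$ with $W$ are correct. Your handling of the delicate case $[\mu_+(x_0),\mu_-(x_0)]=0$ via the self-intersection formula and the identity $\iota_n^*[E_{n+1}]=[E_n]$ obtained by pulling back along the addition map $s\colon C\times C^{[n]}\to C^{[n+1]}$ is clean and correct. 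The only substantive verification you defer --- generic transversality along $\Delta$, $W$, and $V$ --- is, as you say, a local tangent-space check at a reduced configuration and presents no difficulty. In short, your proposal is correct and more detailed than the paper, which simply cites the reference.
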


Let $\iota_r: C^{[n-r]}\rightarrow C^{[n]}$ be the morphism defined by $\iota_r(\xi)=\xi + r x_0$. Then we define
\[
\Gamma_r:=(\iota_r\times \mathrm{id})_* C^{[n-r,n]} \in \CH_n(C^{[n]} \times C^{[n]})
\]
and
\[
\gamma_r : = \Gamma_r|_{C^{[n]}\times \eta_{C^{[n]}}} \in \CH_0\big(C^{[n]}\times \eta_{C^{[n]}}\big).
\]
Let $\Gamma_0 := \Delta_{C^{[n]}}$ and $\Gamma_r=0$ for all $r>n$. We formally write
\[
 \mathcal{A}_n := \bigoplus_{r=0}^n \Z\, \Gamma_r 
\]
and 
\[
\mathcal{A}'_n : =\bigoplus_{r=0}^n \Z\, \gamma_r,
\]
where $\gamma_0=\delta_{C^{[n]}}$.

\begin{lem}\label{lem Cn}
Assume that $1\leq n\leq g$. Then the following statements are true.
\begin{enumerate}[(i)]
\item The natural homomorphism $\mathcal{A}_n\rightarrow \CH_n(C^{[n]} \times C^{[n]})$ is injective. Similarly, the homomorphism $\mathcal{A}'_n\rightarrow \CH_0(C^{[n]}\times \eta_{C^{[n]}})$ is also injective.

\item For any non-negative integer $r$, we have $\mu_{+}(C)^r\circ \mu_{-}(x_0)^r = r!\, \Gamma_r$ in $\CH_n(C^{[n]} \times C^{[n]})$.

\item Let $l$ and $m$ be two non-negative integers, then
\[
\mu_{-}(x_0)^l \circ \mu_{+}(C)^m = \sum_{i=0}^l {l \choose i} m(m-1)\cdots(m-i+1) \mu_{+}(C)^{m-i}\circ \mu_{-}(x_0)^{l-i},
\]
where the coefficient is understood to be 1 for the term $i=0$ in the summation.

\item For non-negative integers $r$ and $r'$, we have
\[
 \Gamma_r\circ\Gamma_{r'} = \sum_{i=0}^{\min\{r,r'\}} \frac{(r+r'-i)!}{i! (r-i)!(r'-i)!} \Gamma_{r+r'-i}
\]
in $\CH_n(C^{[n]}\times C^{[n]})$.

\item $\mathcal{A}_n$ is a commutative sub-algebra of $\CH_n(C^{[n]} \times C^{[n]})$. Similarly, $\mathcal{A}'_n$ a commutative sub-algebra of $\CH_0(C^{[n]}\times \eta_{C^{[n]}})$.
\end{enumerate}
\end{lem}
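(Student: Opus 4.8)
The plan is to exploit the fact that Rennemo's commutation relations turn $a:=\mu_-(x_0)$ and $b:=\mu_+(C)$ into a Weyl (Heisenberg) pair: writing correspondence composition as a product, $ab-ba=\Delta_{C^{[n]}}$, while all other generators commute. I would therefore \emph{not} prove the five assertions in the stated order, but rather as $\mathrm{(iii)}\Rightarrow\mathrm{(ii)}\Rightarrow\mathrm{(iv)}\Rightarrow\mathrm{(v)}$, and treat $\mathrm{(i)}$ separately by a cohomological computation which is the real use of the hypothesis $n\le g$. Assertion $\mathrm{(iii)}$ is precisely the normal-ordering identity $a^l b^m=\sum_i\binom{l}{i}m(m-1)\cdots(m-i+1)\,b^{m-i}a^{l-i}$ in the algebra generated by $a,b$ subject to $ab=ba+\Delta$; it is formal and follows by induction on $l$ using only the single relation (equivalently, it is the computation of $\partial^l x^m$ in $\C[x,\partial]$).

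The geometric core is $\mathrm{(ii)}$, and this is the step I expect to be the main obstacle. First I would identify $\mu_-(x_0)^r$ with the transposed graph ${}^t\Gamma_{\iota_r}$ of $\iota_r\colon C^{[n-r]}\to C^{[n]},\ \xi\mapsto\xi+rx_0$ (graphs of morphisms compose with no excess, multiplicity one). The delicate point is to show $\mu_+(C)^r=r!\,C^{[n-r,n]}$: the composite of the $r$ incidence correspondences $C^{[m,m+1]}$ is supported on the containment locus $\{Z_{\xi_1}\subset Z_{\xi_2}\}$, and over a general pair its fibre is the set of complete flags $Z_{\xi_1}=Z_0\subset Z_1\subset\cdots\subset Z_r=Z_{\xi_2}$, i.e.\ the orderings of the $r$ added points, giving multiplicity $r!$. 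Combining, $\mu_+(C)^r\circ\mu_-(x_0)^r=r!\,\bigl(C^{[n-r,n]}\circ{}^t\Gamma_{\iota_r}\bigr)=r!\,\Gamma_r$. The technical heart here is verifying that this composite carries multiplicity exactly $r!$ and acquires no lower-dimensional excess components, i.e.\ the generic transversality of the iterated incidence composition; this is where genuine geometry (rather than the formal Weyl calculus) is needed.

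Granting $\mathrm{(ii)}$ and $\mathrm{(iii)}$, assertion $\mathrm{(iv)}$ is a routine manipulation: write $\Gamma_r\circ\Gamma_{r'}=\tfrac{1}{r!\,r'!}\,b^r a^r b^{r'}a^{r'}$, normal-order the inner factor $a^r b^{r'}$ by $\mathrm{(iii)}$, recollect the terms $b^{r+r'-i}a^{r+r'-i}$, and convert back by $\mathrm{(ii)}$; the coefficient simplifies to $\tfrac{(r+r'-i)!}{i!\,(r-i)!\,(r'-i)!}$. Then $\mathrm{(v)}$ is immediate: this coefficient is the multinomial number $\binom{r+r'-i}{i,\,r-i,\,r'-i}\in\Z_{\ge 0}$, so $\mathcal{A}_n$ is closed under $\circ$ and contains the unit $\Gamma_0=\Delta_{C^{[n]}}$, while its symmetry in $r\leftrightarrow r'$ gives commutativity. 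Restriction to the generic fibre is an algebra homomorphism (by the composition of $0$-cycles developed in Section 2) sending $\Gamma_r\mapsto\gamma_r$, so the analogous statement for $\mathcal{A}'_n$ follows.

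Finally, for the independence $\mathrm{(i)}$ it suffices to show $\gamma_0,\dots,\gamma_n$ are linearly independent in $\CH_0(C^{[n]}_K)$, since a relation among the $\Gamma_r$ restricts to one among the $\gamma_r$. I would detect this through the birational cycle class map \eqref{eq birational cycle class}. Using the identification $\HH^{p,0}(C^{[n]})\cong\wedge^p\HH^{1,0}(C)$, which is nonzero for all $0\le p\le n$ \emph{precisely because} $n\le g$, the Weyl structure pins down the action on cohomology: $a$ acts as the identity on $\wedge^p\HH^{1,0}(C)$ until it annihilates the top degree, and the relation $ab-ba=\mathrm{id}$ forces $b$ to act on $\HH^{p,0}(C^{[m]})$ by the scalar $m-p+1$. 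By $\mathrm{(ii)}$, the operator $[\Gamma_r]$ then acts on $\HH^{p,0}(C^{[n]})$ by the scalar $\binom{n-p}{r}$. Hence a relation $\sum_r c_r[\gamma_r]=0$ yields $\sum_r c_r\binom{q}{r}=0$ for every $q=n-p\in\{0,1,\dots,n\}$; the matrix $\bigl(\binom{q}{r}\bigr)$ is unitriangular, so all $c_r=0$. This is exactly the place where $n\le g$ is indispensable: for $n>g$ the groups $\HH^{p,0}(C^{[n]})$ vanish once $p>g$, the corresponding equations disappear, the triangular system degenerates, and the $\gamma_r$ genuinely become dependent.
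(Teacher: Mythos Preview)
Your proposal is correct and follows essentially the same route as the paper: the paper also proves $\mathrm{(iii)}$ and $\mathrm{(iv)}$ by modelling $\mu_-(x_0)$ and $\mu_+(C)$ as $\tfrac{\mathrm d}{\mathrm dx}$ and $x$ on $\Z[x]$, proves $\mathrm{(ii)}$ by the same flag-counting argument (phrased dually, showing $\mu_-(C)^r=r!\,C^{[n,n-r]}$ and transposing), and obtains $\mathrm{(v)}$ from $\mathrm{(iv)}$. Two small remarks: first, in your $\mathrm{(iv)}$ you should make explicit that cancelling the factor $r!\,r'!$ from the identity $r!\,r'!\,\Gamma_r\circ\Gamma_{r'}=r!\,r'!\sum(\cdots)\Gamma_{r+r'-i}$ in $\CH_n(C^{[n]}\times C^{[n]})$ requires the torsion-freeness of $\mathcal A_n$, i.e.\ $\mathrm{(i)}$ --- the paper invokes this step explicitly; second, your argument for $\mathrm{(i)}$ via the scalars $\binom{n-p}{r}$ on $\HH^{p,0}(C^{[n]})$ is a cleaner variant of the paper's terse ``pull back to $C^n\times C^n$'' and makes the role of the hypothesis $n\le g$ transparent.
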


\begin{proof}
For (i), we first note that the cycles $\Gamma_r$, $-n\leq r \leq n$, are $\Z$-linearly independent. Indeed, they are independent in cohomology, which can be easily checked by pulling back to $C^n\times C^n$. The same argument works for $\mathcal{A}'_n$.

We show that $\mu_{-}(C)^r = r!\, C^{[n,n-r]}$ in $\CH_n(C^{[n]} \times C^{[n-r]})$. Indeed, the computation at the level of cycles shows that $\mu_{-}(C)^r$ is a multiple of $C^{[n,n-r]}$. Over a generic point of $C^{[n]}$, the action of $\mu_{-}(C)$ is ``removing one point", by which we mean
\[
\xi = y_1 + y_2 + \cdots +y_n \mapsto \sum_{i=1}^n [\xi - y_i]
\]
where $[\xi - y_i]$ is viewed as a point on $C^{[n-1]}$. Thus the action of $\mu_{-}(C)^r$ is ``removing $r$ points one after another". The correspondence $C^{[n,n-r]}$ acts generically as ``removing $r$ points" without remembering the order. Hence we get that the coefficient in front of $C^{[n,n-r]}$ is $r!$. By taking the composition with $\mu_+(x_0)^r$, we have
\[
 \mu_+(x_0)^r\circ \mu_{-}(C)^r =r!\, (\mathrm{id}_{C^{[n]}} \times \iota_r)_*C^{[n,n-r]}=r!\,{}^t\Gamma_r.
\]
Statement (ii) follows by taking the transpose of the above equation.

To prove (iii), we take the model $\mu_{-}(x_0) = \frac{\mathrm{d}}{\mathrm{d}x}$ and $\mu_+(C) = x$ as operators on $\Z[x]$. The formula can be deduced easily by induction.

Use the notation of operators on $\Z[x]$, we have
\begin{align*}
 r!r'! \Gamma_r\circ \Gamma_{r'} & = x^r\left(\frac{\mathrm{d}}{\mathrm{d}x}\right)^r x^{r'} \left(\frac{\mathrm{d}}{\mathrm{d}x}\right)^{r'}\\
 & = x^r\sum_{i=0}^r{r \choose i} r'(r'-1)\cdots(r'-i+1) x^{r'-i} \left(\frac{\mathrm{d}}{\mathrm{d}x}\right)^{r-i} \left(\frac{\mathrm{d}}{\mathrm{d}x}\right)^{r'} \\
 & = \sum_{i=0}^{\min\{r,r'\}}\frac{r!}{i!(r-i)!} \frac{r'!}{(r'-i)!} x^{r+r'-i}\left(\frac{\mathrm{d}}{\mathrm{d}x}\right)^{r+r'-i}\\
 & = r!r'! \sum_{i=0}^{\min\{r,r'\}}\frac{(r+r'-i)!}{i!(r-i)!(r'-i)!}\Gamma_{r+r'-i}.
\end{align*}
By (i), we can cancel the factor $r!r'!$ and get (iv)

Statement (v) follows from (i) and (iv).
\end{proof}

\begin{lem}\label{lem CK for Cn}
For every integer $1\leq n\leq g$, the symmetric product $C^{[n]}$ admits a birational Chow--K\"unneth decomposition.
\end{lem}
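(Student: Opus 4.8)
The plan is to produce the projectors $\varpi^i_{C^{[n]}}$ inside the commutative algebra $\mathcal{A}'_n\subset \CH_0\big(C^{[n]}\times\eta_{C^{[n]}}\big)$ whose structure was pinned down in Lemma \ref{lem Cn}, by identifying $\mathcal{A}'_n$ with a product of copies of $\Z$ indexed by the cohomological degrees. First I record the cohomology: for $1\le n\le g$ one has $\HH^{i,0}(C^{[n]})\cong \wedge^i V$, with $V=\HH^{1,0}(C)$, for all $0\le i\le n$, so the cohomological projector $\varpi^i_{C^{[n]},\mathrm{hom}}$ is just the identity on the $i$-th summand and zero on the others. It therefore suffices to find a $\Z$-linear combination of the $\gamma_r$ realizing this projector; its idempotence and orthogonality will then come for free from the multiplication law of Lemma \ref{lem Cn}(iv)--(v).

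The key computation is the action of $\gamma_r$ (equivalently of a spreading $\Gamma_r$) on $\HH^{i,0}(C^{[n]})$, which I would extract from the factorization $\Gamma_r=\frac1{r!}\,\mu_+(C)^r\circ \mu_-(x_0)^r$ of Lemma \ref{lem Cn}(ii). The operator $\mu_-(x_0)=(\iota_{n-1})^*$ is pullback along the translation $\xi\mapsto\xi+x_0$; since holomorphic forms on $C^{[m]}$ come from the Jacobian via Abel--Jacobi and translations act trivially on $\HH^{i,0}$ of an abelian variety, $\mu_-(x_0)$ is the identity on each $\HH^{i,0}$. The operator $\mu_+(C)=C^{[l,l+1]}$ acts on $\HH^{i,0}(C^{[l]})\to\HH^{i,0}(C^{[l+1]})$ by the scalar $l+1-i$ (for $i=0$ this is the generic degree $l+1$ of the incidence correspondence over $C^{[l+1]}$, and the general case is analogous). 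Composing $r$ steps yields
\[
[\Gamma_r]_*=\binom{n-i}{r}\cdot\mathrm{id}\qquad\text{on }\HH^{i,0}(C^{[n]}).
\]
This value $\binom{n-i}{r}$ is precisely the eigenvalue of $\frac1{r!}x^r(\tfrac{d}{dx})^r$ on $x^{n-i}$, matching the $\Z[x]$-model of Lemma \ref{lem Cn}(iii)--(iv); the cases $r=0$ (identity) and $r=n$ (then $\Gamma_n$ kills all positive-degree forms, and $\binom{n-i}{n}=0$ for $i\ge1$) are convenient checks.

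With this in hand, the birational cycle class map gives a ring homomorphism $\mathcal{A}'_n\to\prod_{i=0}^{n}\Z$ sending $\gamma_r$ to $\big(\binom{n-i}{r}\big)_{i=0}^n$. By Lemma \ref{lem Cn}(i) the $\gamma_r$ are $\Z$-independent, so $\mathcal{A}'_n$ is free of rank $n+1$; the matrix $\big(\binom{n-i}{r}\big)_{i,r}$ becomes the Pascal matrix $\binom{m}{r}$ after the substitution $m=n-i$, hence is unitriangular and unimodular over $\Z$. The homomorphism is thus an isomorphism of $\Z$-algebras $\mathcal{A}'_n\xrightarrow{\ \sim\ }\prod_{i=0}^n\Z$. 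Transporting the standard orthogonal idempotents $e_i$ back yields elements $\varpi^i_{C^{[n]}}\in\CH_0(C^{[n]}_K)$ with $\sum_i\varpi^i_{C^{[n]}}=\gamma_0=\delta_{C^{[n]}}$, with $\varpi^i\circ\varpi^j=\delta_{ij}\varpi^i$ (as $e_ie_j=\delta_{ij}e_i$ and the isomorphism respects $\circ$), and with $[\varpi^i]=\varpi^i_{\mathrm{hom}}$ by construction; this is exactly a birational Chow--K\"unneth decomposition.

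The main obstacle is the \emph{integrality} in the last step: diagonalizing over $\Q$ is trivial, but an integral decomposition hinges on the unimodularity of the binomial matrix, i.e. on the explicit eigenvalues $\binom{n-i}{r}$ and not merely on their distinctness. Establishing those eigenvalues is the only genuinely geometric step, resting on the translation-invariance of $\mu_-(x_0)$ on holomorphic forms and on the scalar by which $\mu_+(C)$ acts, both justified through Abel--Jacobi together with the bookkeeping already available from Lemma \ref{lem Cn}. The hypothesis $n\le g$ enters exactly where needed: to guarantee $\HH^{i,0}(C^{[n]})=\wedge^iV$ for all $i\le n$ and the $\Z$-independence of the $\gamma_r$, without which $\mathcal{A}'_n$ would have rank $<n+1$ and the matrix argument would collapse.
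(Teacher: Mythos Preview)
Your argument is correct and takes a genuinely different route from the paper. The paper produces the projectors by pushing forward the product Chow--K\"unneth decomposition of $C^n$ along the quotient $f\colon C^n\to C^{[n]}$: since $(f\times f)_*\Delta_{C^n}=n!\,\Delta_{C^{[n]}}$, and the pushed projectors (restricted to the generic point) are claimed to lie in $n!\,\mathcal A'_n$, the factor $n!$ is cancelled using the injectivity of $\mathcal A'_n$ into cohomology from Lemma~\ref{lem Cn}(i). You instead diagonalise $\mathcal A'_n$ directly: computing that $\gamma_r$ acts on $\HH^{i,0}(C^{[n]})$ by the integer scalar $\binom{n-i}{r}$ and observing that the resulting Pascal matrix is unimodular yields a ring isomorphism $\mathcal A'_n\cong\prod_{i=0}^n\Z$, from which the orthogonal idempotents are read off immediately. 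Your approach makes the integrality mechanism completely explicit and avoids checking that the pushed projectors sit in $\mathcal A'_n$ with the right divisibility (a step the paper asserts without detail); the paper's approach, by contrast, ties the decomposition to that of $C$ and carries over verbatim to the $K3$ case in Theorem~\ref{thm Hilb}. One small point: your justification that $[\mu_+(C)]_*$ acts by $l+1-i$ (``the general case is analogous'') is thin. The clean way to pin this down is via the commutator $[\mu_-(x_0),\mu_+(C)]=\Delta$ together with your Abel--Jacobi observation that $[\mu_-(x_0)]_*$ is the identity on $\wedge^iV$ for $l>i$ and zero at $l=i$: writing $M_l$ for $[\mu_+(C)]_*$ on $\HH^{i,0}(C^{[l]})$, the relation forces $M_l-M_{l-1}=\mathrm{id}$ and $M_i=\mathrm{id}$, hence $M_l=(l+1-i)\,\mathrm{id}$.
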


\begin{proof}
Let $f: C^n \rightarrow C^{[n]}$ be the quotient map and let
\[
f\times f: C^n\times C^n \longrightarrow C^{[n]} \times C^{[n]}
\]
be the two-fold self product of $f$. Then we have
\[
 (f\times f)_*\Delta_{C^n} = n!\, \Delta_{C^{[n]}}. 
\]
After fixing a closed point $x_0\in C$, we have a Chow--K\"unneth decomposition of $C$ as follows
\[
\pi_C^0 = x_0\times C, \qquad \pi_C^2 = C\times x_0,\qquad \pi_C^1 = \Delta_C - \pi_C^0 -\pi_C^2.
\]
This gives rise to a Chow--K\"unneth decomposition of $C^n$ given by
\[
\pi_{C^n}^ i =\sum_{i_1+\cdots + i_n =i} \pi_C^{i_1}\otimes\cdots \otimes \pi_{C}^{i_n}, \qquad 0\leq i \leq 2n.
\]
If $i_r=2$, for some $0\leq r\leq n$, then the restriction of $(f\times f)_*(\pi_C^{i_1}\otimes \cdots \otimes \pi_C^{i_n})$ to $C^{[n]}\times \eta_{C^{[n]}}$ is trivial. When $i>n$, the condition $i_1+\cdots +i_n=i$ will force $i_r=2$ for some $r$. It follows that 
\[
(f\times f)_*\pi_{C^n}^i|_{C^{[n]}\times \eta_{C^{[n]}}} = 0, \quad\text{in }\CH_0(C^{[n]}\times \eta_{C^{[n]}}),
\]
for all $n+1\leq i\leq 2n$. By restricting the cycles to $C^{[n]}\times \eta_{C^{[n]}}$, we see that
\[
(f\times f)_* \pi^i_{C^n} = n!\, \Pi_i, \quad\text{in }\CH_0(C^{[n]}\times \eta_{C^{[n]}}),\quad 0\leq i\leq n,
\]
for some $\Pi_i\in \mathcal{A}_n$. Apply $(f\times f)_*$ to the Chow--K\"unneth decomposition of $C^n$ and we get
\[
 n! \delta_{C^{[n]}} = n! \sum_{i=0}^n \varpi^i_{C^{[n]}},\quad \text{in }\CH_0(C^{[n]}\times \eta_{C^{[n]}}),
\]
where $\varpi_{C^{[n]}}^i = \Pi_i|_{C^{[n]}\times \eta_{C^{[n]}}}\in \mathcal{A}'_n$. By (i) of Lemma \ref{lem Cn}, we can cancel the factor $n!$ and get the equality
\[
 \delta_{C^{[n]}} = \sum_{i=0}^n \varpi^i_{C^{[n]}},\quad \text{in }\CH_0(C^{[n]}\times \eta_{C^{[n]}}).
\]
We check that the action of $\varpi_{C^{[n]}}^i$ on cohomology is exactly the required one for a birational Chow--K\"unneth decomposition. This is done by checking its action on the $S_n$-invariant part of the cohomology of $C^{n}$.
\end{proof}

\begin{thm}\label{thm jac}
Every Jacobian variety admits a birational Chow--K\"unneth decomposition.
\end{thm}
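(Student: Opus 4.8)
The plan is to deduce the statement for a Jacobian variety $J = J(C)$ from the already-established Lemma \ref{lem CK for Cn}, by exploiting the stably birational invariance provided by Proposition \ref{prop birational invariance}. The key observation is that the symmetric product $C^{[g]}$, where $g$ is the genus of $C$, is birational to the Jacobian $J(C)$ via the Abel--Jacobi map: after fixing a base point, the map $C^{[g]} \to J(C)$ sending $\xi$ to its linear equivalence class is birational, since a generic effective divisor of degree $g$ has a unique effective representative in its linear equivalence class (the generic fibre is a single point). Thus $J(C)$ is birational to $C^{[g]}$, and Lemma \ref{lem CK for Cn} applies precisely in the range $1 \leq n \leq g$, so the case $n = g$ gives us a birational Chow--K\"unneth decomposition of $C^{[g]}$.

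First I would recall the Abel--Jacobi setup: fix $x_0 \in C$ and consider the morphism $a\colon C^{[g]} \to J(C)$ defined on points by $\xi \mapsto \mathcal{O}_C(\xi - g x_0)$. By the Riemann--Roch theorem, a generic line bundle of degree $g$ has exactly one section up to scalar, hence exactly one effective divisor in its linear system, so $a$ is birational. Second, I would invoke Proposition \ref{prop birational invariance}, which asserts that the existence of a birational Chow--K\"unneth decomposition is a stably birational (in particular birational) invariant. Since $C^{[g]}$ admits such a decomposition by Lemma \ref{lem CK for Cn} with $n = g$, and $J(C)$ is birational to $C^{[g]}$, it follows immediately that $J(C)$ admits a birational Chow--K\"unneth decomposition as well.

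I do not anticipate a genuine obstacle here, since the substantive work has already been carried out in Lemma \ref{lem CK for Cn}; the main point of the theorem is simply to transport that result across the birational equivalence. The only detail worth spelling out carefully is the birationality of the Abel--Jacobi map $a\colon C^{[g]} \to J(C)$, which rests on the standard fact that $\dim |D| = 0$ for a generic effective divisor $D$ of degree $g$ (equivalently, that the generic fibre of $a$ is a single reduced point). One should also note that every Jacobian arises as $J(C)$ for some smooth projective curve $C$, so the argument covers all Jacobian varieties. Thus the proof reduces to citing Lemma \ref{lem CK for Cn} in the case $n = g$ together with Proposition \ref{prop birational invariance}, once the birationality of $a$ is recorded.
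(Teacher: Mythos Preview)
Your proposal is correct and matches the paper's own proof essentially verbatim: the paper likewise observes that $\mathrm{Jac}(C)$ is birational to $C^{[g]}$ and then invokes Lemma~\ref{lem CK for Cn} together with Proposition~\ref{prop birational invariance}. The only additional content you provide is a justification of the birationality of the Abel--Jacobi map, which the paper simply asserts.
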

\begin{proof}
Let $C$ be a smooth projective curve of genus $g$. Then its Jacobian $\mathrm{Jac}(C)$ is birational to $C^{[g]}$. The theorem follows from Lemma \ref{lem CK for Cn} together with the birational invariance of the existence of a birational Chow--K\"unneth decomposition (Proposition \ref{prop birational invariance}). 
\end{proof}

\begin{cor}
For all positive integer $n$, the symmetric product $C^{[n]}$ has a birational Chow--K\"unneth decomposition.
\end{cor}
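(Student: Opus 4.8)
The plan is to reduce the general case to the two situations already settled: the range $1\le n\le g$ treated in Lemma \ref{lem CK for Cn}, and the Jacobian treated in Theorem \ref{thm jac}. Since every positive integer $n$ satisfies either $n\le g$ or $n\ge g$, it suffices to handle $n\ge g$, and for this I would show that $C^{[n]}$ is stably birational to $\mathrm{Jac}(C)$ and then invoke Proposition \ref{prop birational invariance}. The first step is to set up the Abel--Jacobi map: after fixing a base point, sending an effective divisor $D$ of degree $n$ to the class $\calO_C(D)$ defines a morphism $C^{[n]}\to \Pic^n(C)\cong \mathrm{Jac}(C)$ whose fibre over a class $[L]$ is the complete linear system $|L|=\PP(\HH^0(C,L))$.

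Next I would compute these fibres for generic $L$. By Serre duality $h^1(L)=h^0(K_C-L)$, and for $n\ge g$ one has $\deg(K_C-L)=2g-2-n\le g-2$; since a generic line bundle of degree at most $g-1$ carries no nonzero section, $h^1(L)=0$ for generic $L$ of degree $n$. Riemann--Roch then gives $h^0(L)=n-g+1$. Hence over a dense open $U\subset\mathrm{Jac}(C)$ the Abel--Jacobi map is the projectivization of the rank-$(n-g+1)$ vector bundle obtained by pushing forward a Poincar\'e bundle along $C\times U\to U$, that is, a Zariski-locally trivial $\PP^{n-g}$-bundle. Such a bundle is birational to $U\times\PP^{n-g}$, so $C^{[n]}$ is birational to $\mathrm{Jac}(C)\times\PP^{n-g}$ and therefore stably birational to $\mathrm{Jac}(C)$.

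Finally, Theorem \ref{thm jac} provides a birational Chow--K\"unneth decomposition for $\mathrm{Jac}(C)$, and Proposition \ref{prop birational invariance} transports it to $C^{[n]}$ for every $n\ge g$. Combined with Lemma \ref{lem CK for Cn} for $1\le n\le g$ (the two ranges overlapping harmlessly at $n=g$), this proves the statement for all $n\ge 1$; the degenerate cases $g\le 1$ are covered as well, since then $\mathrm{Jac}(C)$ is a point or an elliptic curve and the same birational description applies.

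The only genuinely geometric input is the middle step --- that $C^{[n]}\dashrightarrow\mathrm{Jac}(C)$ is generically a projective bundle, equivalently that $C^{[n]}$ is stably birational to $\mathrm{Jac}(C)$ for $n\ge g$ --- which I expect to be the main point and the sole place where the Riemann--Roch computation enters. Everything else is a purely formal application of the stable birational invariance established earlier, so no further calculation is required once this classical fact is in hand.
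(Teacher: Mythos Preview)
Your proposal is correct and follows essentially the same route as the paper: for $n\le g$ invoke Lemma \ref{lem CK for Cn}, and for $n>g$ use that the Abel--Jacobi map $C^{[n]}\to\mathrm{Jac}(C)$ is generically a projective bundle, hence $C^{[n]}$ is stably birational to $\mathrm{Jac}(C)$, and conclude via Theorem \ref{thm jac} and Proposition \ref{prop birational invariance}. The paper simply asserts the projective-bundle fact as known, whereas you spell out the Riemann--Roch verification; otherwise the arguments coincide.
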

\begin{proof}
Lemma \ref{lem CK for Cn} takes care of $n\leq g$. When $n>g$, we know that $C^{[n]}\rightarrow \mathrm{Jac}(C)$ is generically a projective bundle. Hence $C^{[n]}$ and $\mathrm{Jac}(C)$ are stably birational to each other.
\end{proof}

\section{the Hilbert scheme of a $K3$ surface}

Let $S$ be an algebraic complex $K3$ surface. In this section we show that the Hilbert scheme of points on $S$ admits a birational Chow--K\"unneth decomposition.

\begin{thm}[Beauville--Voisin \cite{bv}]\label{thm bv}
There is a canonical class $\mathfrak{o} = \mathfrak{o}_S$ in $\CH_0(S)$, which is represented by any point on a rational curve of $S$ and satisfies the following conditions.
\begin{enumerate}[(i)]
\item Let $D_1$ and $D_2$ be two divisors on $S$, then $D_1\cdot D_2 = \deg(D_1\cdot D_2)\mathfrak{o}_S$ in $\CH_0(S)$.
\item The second Chern class of $S$ is given by $c_2(S)=24 \mathfrak{o}_S$ in $\CH_0(S)$.
\end{enumerate}
\end{thm}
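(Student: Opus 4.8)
The plan is to follow the classical strategy underlying such statements: first isolate a single canonical class coming from rational curves, then propagate it to divisor intersections and to $c_2$. I would begin by establishing the well-definedness of $\mathfrak{o}_S$, that is, that all points lying on rational curves represent one and the same class in $\CH_0(S)$. On a fixed (possibly singular) rational curve $R$ any two points are rationally equivalent, since their preimages under the normalization $\PP^1 \to R$ are rationally equivalent and pushforward preserves rational equivalence; hence each $R$ determines a single class $[R]$. To see that $[R]$ does not depend on $R$, I would fix an ample line bundle $L$ and use the existence of rational members in $|L|$ (after possibly replacing $L$ by a multiple): if $R, R' \in |L|$ are two rational members their intersection number $L^2$ is positive, so they meet at a point $p$, whence $[R] = [p] = [R']$; and an arbitrary rational curve $R''$ meets every member of the ample system, so $[R''] = \mathfrak{o}_S$ as well. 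The external input here, that an ample (multiple of a) linear system on a K3 surface contains a member which is rational, or at worst a connected union of rational curves, is the Bogomolov--Mumford theorem, and I regard securing a usable form of it, with enough connectedness to keep the point class well defined even for reducible rational members, as the \emph{main obstacle}; the rest is essentially formal once it is available.

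For part (i), I would first prove $c_1(L)^2 = (L^2)\,\mathfrak{o}_S$ for an ample $L$ admitting a rational member $R \in |L|$. Choosing a second member $R' \in |L|$ meeting $R$ transversally, the $0$-cycle $R \cdot R' = \sum_{p \in R \cap R'}[p]$ represents $c_1(L)^2$ in $\CH_0(S)$, and since every such $p$ lies on the rational curve $R$ we obtain $R \cdot R' = (L^2)\,\mathfrak{o}_S$. The general case then follows from the bilinearity of the intersection product together with the polarization identity $2\,D_1 \cdot D_2 = (D_1 + D_2)^2 - D_1^2 - D_2^2$, using that ample classes admitting rational members span $\Pic(S) \otimes \Q$. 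The integral statement is recovered because each $D_i \cdot D_j$ is itself represented by an honest $0$-cycle whose image in $\CH_0(S)$ is a multiple of $\mathfrak{o}_S$, the integer being pinned down by the degree $\deg(D_1 \cdot D_2)$.

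For part (ii), I would reduce to an elliptic K3 surface $\pi : S \to \PP^1$ with a section. There $c_2(S)$ localizes on the singular fibres: away from them the relative tangent bundle is a line bundle, so the smooth fibres (tori, of vanishing topological Euler number) contribute nothing and $c_2(S)$ is represented by a $0$-cycle supported on the singular fibres, each of which is a configuration of rational curves whose points equal $\mathfrak{o}_S$ by the construction above. Since the total multiplicity is $\deg c_2(S) = \chi_{\mathrm{top}}(S) = 24$, this yields $c_2(S) = 24\,\mathfrak{o}_S$ on elliptic K3s. The remaining subtlety, and the second place where care is needed, is the passage to an arbitrary K3 surface: here I would propagate the identity in a family whose generic fibre is the given $S$ and which specializes to an elliptic K3, checking that both $\mathfrak{o}$ and the class $c_2$ behave well under specialization, so that the relation $c_2 - 24\,\mathfrak{o} = 0$ spreads out from the dense locus of elliptic members to all of moduli.
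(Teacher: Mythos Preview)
The paper does not give a proof of this theorem at all: it is quoted, with attribution to Beauville and Voisin, as a known input to the later arguments. So there is no ``paper's own proof'' to compare against; your proposal is really an attempt to reconstruct the original Beauville--Voisin argument.

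Your treatment of the well-definedness of $\mathfrak{o}_S$ and of part~(i) follows the standard line and is essentially sound. One point to make explicit: after the polarisation identity you only have $D_1\cdot D_2 - \deg(D_1\cdot D_2)\,\mathfrak{o}_S$ torsion in $\CH_0(S)$, and your last sentence (``each $D_i\cdot D_j$ is itself represented by an honest $0$-cycle whose image \dots\ is a multiple of $\mathfrak{o}_S$'') is circular as written. The clean way to close this is to invoke Roitman's theorem: since $\mathrm{Alb}(S)=0$ for a $K3$ surface, $\CH_0(S)$ is torsion free, and the rational identity upgrades to an integral one.

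Part~(ii), however, has a genuine gap. Your specialisation argument runs in the wrong direction. If $S$ is the \emph{generic} fibre of a family degenerating to an elliptic $K3$ surface $S_0$, the specialisation map goes $\CH_0(S)\to\CH_0(S_0)$; knowing $c_2-24\,\mathfrak{o}=0$ on $S_0$ says nothing about its vanishing on $S$, only that the class on $S$ lies in the (possibly large) kernel of specialisation. Conversely, making $S$ the \emph{special} fibre of a family with elliptic generic fibre forces $S$ itself to acquire an elliptic pencil, which a general $K3$ does not have. Either way the deformation step fails. The Beauville--Voisin argument avoids this entirely: one works on $S$ itself, taking a Lefschetz pencil in an ample system $|L|$, blowing up the base locus to get a genus-one fibration $\tilde S\to\PP^1$, and observing that $c_2(\tilde S)$ is supported on the singular fibres, each of which is a rational configuration passing through the (blown-up) base points --- points that already lie on rational curves of $S$ and hence represent $\mathfrak{o}_S$. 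Comparing $c_2(\tilde S)$ with $c_2(S)$ via the blow-up formula then yields $c_2(S)=24\,\mathfrak{o}_S$ directly, with no need to change the surface.
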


Let $S^{[n]}$ be the Hilbert scheme of length-$n$ closed subschemes of $S$ and we know that $S^{[n]}$ is a compact hyperk\"ahler manifold. It is shown by Vial \cite{vial} that $S^{[n]}$ admits a Chow--K\"unneth decomposition that is multiplicative. The result of Vial is with rational coefficients, though it is likely that the same result holds for integral coefficients. When we restrict to the birationally invariant part of the motive, it is much easier to see that a birational Chow--K\"unneth decomposition exists on $S^{[n]}$.

The method is very similar to that of the case $C^{[n]}$. Let $S^{[n-r,n]}\subset S^{[n-r]}\times S^{[n]}$ be the main component of 
\[
\Set{(\xi_1,\xi_2)\in S^{[n-r]} \times S^{[n]}}{ Z_{\xi_1}\subseteq Z_{\xi_2}}
\]
which dominates the factor $S^{[n]}$. We view $S^{[n-r,n]}$ as an element in $\CH_{2n}(S^{[n-r]} \times S^{[n]})$. Let $S^{[n,n-r]}\in \CH_{2n}(S^{[n]} \times S^{[n-r]})$ be the transpose of $S^{[n-r,n]}$. This allows us to define
\[
\mu_{-}(S) = S^{[n,n-1]} \in \CH_{2n}(S^{[n]} \times S^{[n-1]})\quad\text{and} \quad \mu_{+}(S) = S^{[n-1,n]} \in \CH_{2n}(S^{[n-1]} \times S^{[n]})
\]
Fix distinct points $x_1,x_2,\ldots, x_l, \ldots \in S$ that are of the class $\mathfrak{o}_S$. Such a choice gives rise to rational maps
\[
 \iota_r : S^{[n-r]}\dashrightarrow S^{[n]},\quad \xi \mapsto \xi + x_1 +\cdots + x_r
\]
where $\xi$ is supported away from $x_1,\ldots,x_r$. Let $\Gamma_{\iota_r}\subset S^{[n-r]}\times S^{[n]}$ be the closure of the graph of $\iota_r$.

\begin{defn}
Let $\mu_{-}(\mathfrak{o})\in \CH_0(S^{[n]} \times \eta_{S^{[n-1]}})$ be the restriction of the correspondence ${}^t\Gamma_{\iota_1}\in \CH_{2n-2}(S^{[n]} \times S^{[n-1]})$. Let $\mu_{+}(\eta) \in \CH_0(S^{[n-1]} \times \eta_{S^{[n]}})$ be the restriction of $\mu_+(S)$.
\end{defn}

\begin{rmk}
The definition of $\mu_{-}(\mathfrak{o})$ is independent of the choice of $x_1$. We use the same notation when $n$ varies; if we compose such correspondences, the correct dimension is being understood.
\end{rmk}

\begin{prop}
The above correspondences satisfy $[\mu_{-}(\mathfrak{o}), \mu_{+}(\eta)] = 1$ in $\CH_0(S^{[n]}\times \eta_{S^{[n]}})$.
\end{prop}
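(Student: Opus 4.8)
The plan is to mirror the proof of the analogous relation $[\mu_-(x_0),\mu_+(C)]=\Delta_{C^{[n]}}$ for curves (Rennemo's proposition), replacing the role of a fixed point $x_0\in C$ with the Beauville--Voisin class $\mathfrak{o}_S$ and carrying out the whole computation at the level of $0$-cycles over the generic point, where the Beauville--Voisin relations are available to control $0$-cycles of the form $\mathfrak{o}_S$. The point of working in $\CH_0(S^{[n]}\times \eta_{S^{[n]}})$ rather than in $\CH_{2n}(S^{[n]}\times S^{[n]})$ is precisely that it kills all the error terms supported on divisors, which is exactly where the curve argument and the $K3$ argument would otherwise diverge: on a curve the incidence correspondences compose cleanly, but on a surface the naive composition $\mu_+\circ\mu_-$ picks up correction terms coming from the interaction of two points colliding, and these corrections are controlled only after restricting to the generic point and invoking Theorem \ref{thm bv}.

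First I would compute the two composites $\mu_+(\eta)\circ\mu_-(\mathfrak{o})$ and $\mu_-(\mathfrak{o})\circ\mu_+(\eta)$ separately as elements of $\CH_0(S^{[n]}\times\eta_{S^{[n]}})$, using the description of the composition of $0$-cycles over function fields from Lemma \ref{lem composing 0-cycles}. Geometrically, over the generic point $\eta=\eta_{S^{[n]}}$ a length-$n$ scheme $\xi$ is a sum of $n$ distinct reduced points in general position. The correspondence $\mu_-(\mathfrak{o})$ acts as ``adjoin the fixed class $\mathfrak{o}_S$ (concretely, a point $x_1$ of that class), producing a length-$(n+1)$ scheme, then remove one of the $n+1$ points in all possible ways", while $\mu_+(\eta)$ acts by the reverse incidence. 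I expect $\mu_+(\eta)\circ\mu_-(\mathfrak{o})$ to contribute $(n+1)$ terms and $\mu_-(\mathfrak{o})\circ\mu_+(\eta)$ to contribute $n$ terms, so that in the bookkeeping the diagonal term $\delta_{S^{[n]}}$ survives once in the difference and everything else must cancel.

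The key step, and the one I expect to be the main obstacle, is verifying that all the off-diagonal terms genuinely cancel in $\CH_0(S^{[n]}\times\eta_{S^{[n]}})$ rather than merely in cohomology. The terms that do not immediately match up are those in which one of the original $n$ generic points of $\xi$ is removed and the added point $x_1$ of class $\mathfrak{o}_S$ is retained, versus the term in which $x_1$ is removed and $\xi$ is returned intact; the former produces a configuration with one generic point replaced by a point of class $\mathfrak{o}_S$. To see these match, I would spread the cycles out to honest correspondences on $S^{[n]}\times S^{[n]}$, use that the difference of two spreadings is supported on a divisor (hence dies upon restriction to $\eta$), and then apply the Beauville--Voisin relation (Theorem \ref{thm bv}(i)), which guarantees that any $0$-cycle of the form ``a configuration containing a point of class $\mathfrak{o}_S$ moving in a family'' is rationally equivalent, over the generic point, to the constant configuration — because moving the point $x_1$ within its rational-equivalence class $\mathfrak{o}_S$ sweeps out a cycle supported on a divisor of $S^{[n]}$. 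This is exactly the mechanism that fails integrally for a general $K3$ but holds here by Beauville--Voisin; once the divisor-supported corrections are discarded by restriction to $\eta$, the residual terms cancel in pairs and one is left with $\delta_{S^{[n]}}$, giving $[\mu_-(\mathfrak{o}),\mu_+(\eta)]=1$.
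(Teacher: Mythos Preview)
Your overall strategy---compute the fibers of the two spread-out composites over a general point $\xi=[y_1,\dots,y_n]$ of the second factor and observe that the difference is the diagonal modulo something supported over a divisor---is exactly what the paper does. But you have misidentified the crux of the argument. The off-diagonal terms do not merely cancel in $\CH_0$ after invoking Beauville--Voisin; they are \emph{literally the same points}. Concretely, the spread composite for $\mu_-(\mathfrak{o})\circ\mu_+(\eta)$ meets $S^{[n]}\times\xi$ in the $n$ points $([x_1,y_1,\dots,\hat{y}_i,\dots,y_n],\xi)$ together with the single diagonal point $(\xi,\xi)$, while the spread composite for $\mu_+(\eta)\circ\mu_-(\mathfrak{o})$ meets $S^{[n]}\times\xi$ in those same $n$ points and nothing else. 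The difference of the two correspondences minus $\Delta_{S^{[n]}}$ is therefore supported on $S^{[n]}\times D$ for some divisor $D$, and restricting to $S^{[n]}\times\eta_{S^{[n]}}$ finishes the proof.

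In particular, Theorem \ref{thm bv} plays no role whatsoever in this proposition: the argument works verbatim for \emph{any} fixed point $x_1\in S$, and the class $\mathfrak{o}_S$ enters only later to make the resulting decomposition canonical. Your description of $\mu_-(\mathfrak{o})$ as ``adjoin $x_1$ then remove a point in all possible ways'' is also not right---that is already the composite $\mu_-(\mathfrak{o})\circ\mu_+(\eta)$ seen in the fiber over $\xi$, not $\mu_-(\mathfrak{o})$ alone. Unwinding what each correspondence does individually (one is $(\iota_1)^*$, the other is the incidence $S^{[n-1,n]}$) will make the identical cancellation transparent and obviate the Beauville--Voisin detour you propose.
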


\begin{proof}
Let $\xi= [y_1,\ldots,y_n]\in S^{[n]}$ be a general point. The cycle $\Gamma' :=p_{13,*}\Big( p_{12}^*S^{[n,n+1]} \cdot p_{23}^*{}^t\Gamma_{\iota_1} \Big)$ meets $S^{[n]} \times \xi$ in the points
\[
([x_1,y_1,\ldots,\hat{y}_i, \ldots,y_n],\xi),\,\, 1\leq i\leq n\quad \text{and}\quad (\xi,\xi).
\]
The cycle $\Gamma'':= p_{13,*}\Big( p_{12}^*{}^t\Gamma_{\iota_1} \cdot p_{23}^*S^{[n-1,n]} \Big)$ meets $S^{[n]} \times \xi$ in the points
\[
 ([x_1,y_1,\ldots,\hat{y}_i, \ldots,y_n],\xi),\,\, 1\leq i\leq n.
\]
It follows that $\Gamma' -\Gamma'' -\Delta_{S^{[n]}} \in \CH_{2n}(S^{[n]} \times S^{[n]})$ is supported on $S^{[n]}\times D$ for some divisor $D\subset S^{[n]}$. Note that 
\[
\Gamma'|_{S^{[n]} \times \eta_{S^{[n]}}} = \mu_{-}(\mathfrak{o})\circ \mu_{+}(\eta),\qquad \Gamma''|_{S^{[n]} \times \eta_{S^{[n]}}} = \mu_{+}(\eta)\circ \mu_{-}(\mathfrak{o}).
\]
Since the restriction of $\Gamma'-\Gamma''-\Delta_{S^{[n]}}$ to $S^{[n]} \times \eta_{S^{[n]}}$ is zero, we get $[\mu_{-}(\mathfrak{o}),\mu_{+}(\eta)] = 1$.
\end{proof}

As in the case of $C^{[n]}$, we need to divide $\mu_{+}(\eta)^r\circ \mu_{-}(\mathfrak{o})^r$ by $r!$. For that, we take
\[
\Gamma_r := (\iota_r\times \mathrm{id})_* S^{[n-r,n]} \in \CH_{2n}(S^{[n]} \times S^{[n]}).
\]
By convention, we write $\Gamma_0 := \Delta_{S^{[n]}}$. Let $\gamma_r\in\CH_0(S^{[n]} \times \eta_{S^{[n]}})$ be the restriction of $\Gamma_r$ to $S^{[n]} \times \eta_{S^{[n]}}$. We define
\[
 \mathcal{A}'_n := \bigoplus_{r=0}^{n} \Z\,\gamma_r.
\]

\begin{lem}
For all positive integer $n$, the following statements are true.
\begin{enumerate}[(i)]
\item For all $0\leq r \leq n$, we have $r!\,\gamma_r = \mu_{+}(\eta)^r\circ \mu_{-}(\mathfrak{o})^r$ in $\CH_0(S^{[n]} \times \eta_{S^{[n]}})$.
\item The natural homomorphism $\mathcal{A}'_n\rightarrow \CH_0(S^{[n]} \times \eta_{S^{[n]}})$ is injective.
\item For all $0\leq r,r' \leq n$, we have
\[
 \gamma_{r}\circ \gamma_{r'} = \sum_{i=0}^{\min\{r,r'\}} \frac{(r+r'-i)!}{i!(r-i)!(r'-i)!}\, \gamma_{r+r'-i}
\]
in $\CH_0((S^{[n]} \times \eta_{S^{[n]}})$, where $\gamma_l=0$ for all $l>n$.
\item $\mathcal{A}'_n$ is a commutative sub-algebra of $\CH_0((S^{[n]} \times \eta_{S^{[n]}})$.
\end{enumerate}
\end{lem}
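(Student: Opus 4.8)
The plan is to mirror the curve computation of Lemma \ref{lem Cn}, taking the Heisenberg relation $[\mu_-(\mathfrak{o}),\mu_+(\eta)]=1$ proved just above as the only structural input and using the Beauville--Voisin relations (Theorem \ref{thm bv}) to keep everything independent of the chosen representatives $x_1,x_2,\dots$ of $\mathfrak{o}$. For (i) I would compute the two composites separately and geometrically. Taking the $x_j$ distinct, the operator $\mu_-(\mathfrak{o})$ strips off one \emph{named} point, so $\mu_-(\mathfrak{o})^r$ is the restriction of ${}^t\Gamma_{\iota_r}$ with no extra factor; by contrast $\mu_+(\eta)$ adds an \emph{anonymous} generic point, and adding $r$ points in succession is the ordered form of the unordered correspondence $S^{[n-r,n]}$, so $\mu_+(\eta)^r$ equals $r!$ times the restriction of $S^{[n-r,n]}$. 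Composing and using $S^{[n-r,n]}\circ{}^t\Gamma_{\iota_r}=(\iota_r\times\mathrm{id})_*S^{[n-r,n]}=\Gamma_r$ up to a cycle supported on $S^{[n]}\times D$ for a divisor $D$, which vanishes upon restriction to the generic point, gives $\mu_+(\eta)^r\circ\mu_-(\mathfrak{o})^r=r!\,\gamma_r$.

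For (ii) I would detect the $\gamma_r$ on holomorphic forms. As $S^{[n]}$ is holomorphic symplectic, $\bigoplus_i\HH^{i,0}(S^{[n]})=\bigoplus_{k=0}^n\C\,\sigma^k$ with $\sigma$ the symplectic form, and pulling back to $S^n$ one has $\sigma^k\mapsto k!\sum_{|I|=k}\prod_{i\in I}\mathrm{pr}_i^*\omega$ because $\omega\wedge\omega=0$ on the surface $S$. A fixed point $x_j$ kills $\omega$ under pullback, so the correspondence $\Gamma_r$, which trades $r$ of the moving points for the fixed $x_j$'s, acts on $\sigma^k$ by the scalar $\binom{n-k}{r}$, the number of ways to place the $r$ discarded points among the $n-k$ indices not carrying a form. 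Writing $m=n-k$, the matrix $\big(\binom{m}{r}\big)_{0\le r,m\le n}$ is triangular with unit diagonal, hence invertible; therefore $[\gamma_0],\dots,[\gamma_n]$ are linearly independent, and so are the $\gamma_r$ in $\CH_0(S^{[n]}\times\eta_{S^{[n]}})$, which is statement (ii). This computation also records, for free, the cohomological action needed later to verify the birational Chow--K\"unneth axioms.

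Parts (iii) and (iv) are then formal. The single relation $[\mu_-(\mathfrak{o}),\mu_+(\eta)]=1$ yields, by the induction of Lemma \ref{lem Cn}(iii), the straightening rule rewriting $\mu_-(\mathfrak{o})^r\circ\mu_+(\eta)^{r'}$ as a sum of terms $\mu_+(\eta)^{r'-i}\circ\mu_-(\mathfrak{o})^{r-i}$. Substituting this into $r!\,r'!\,\gamma_r\circ\gamma_{r'}=\mu_+(\eta)^r\mu_-(\mathfrak{o})^r\mu_+(\eta)^{r'}\mu_-(\mathfrak{o})^{r'}$ and re-collecting via (i) reproduces the computation of Lemma \ref{lem Cn}(iv) verbatim; (ii) then allows cancellation of $r!\,r'!$, giving the stated product formula. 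Statement (iv) is immediate, since the right-hand side of (iii) lies in $\mathcal{A}'_n$ and is symmetric in $r$ and $r'$, so $\mathcal{A}'_n$ is a commutative subalgebra.

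The main obstacle is (i). Because the $\iota_r$ are only rational maps, the tidy ordered-versus-unordered count of the curve case has to be executed with graph closures, and the key point is to check that the discrepancy between $S^{[n-r,n]}\circ{}^t\Gamma_{\iota_r}$ and $\Gamma_r$ is genuinely supported on $S^{[n]}\times(\text{divisor})$. This is exactly where Theorem \ref{thm bv} enters: the relation $D_1\cdot D_2=\deg(D_1\cdot D_2)\,\mathfrak{o}$ forces the intersection multiplicities to be the expected ones and makes the composite independent of which representatives $x_j$ of $\mathfrak{o}$ are used, so that after restriction to the generic point the error terms disappear.
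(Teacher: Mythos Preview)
Your proof follows the paper's approach: geometric computation over a general point for (i), injectivity via the birational cycle class map for (ii), and the formal Heisenberg computation of Lemma \ref{lem Cn} for (iii)--(iv). Your treatment of (ii) is in fact more explicit than the paper's, which simply asserts that $\mathcal{A}'_n \to \bigoplus_i \End(\HH^{i,0}(S^{[n]}))$ is injective; your computation of the action of $\gamma_r$ on $\sigma^k$ as $\binom{n-k}{r}$ is exactly what makes that assertion work.

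The one point where you go astray is the final paragraph. Theorem \ref{thm bv} plays \emph{no role} in the proof of this lemma. The paper handles (i) by computing, over a general point $\xi=[y_1,\dots,y_n]\in S^{[n]}$, the fibre of a cycle representing $\mu_+(S)^r\circ\iota_r^*$; since $\xi$ is general, the rational map $\iota_r$ is defined there and the support is manifestly $\{[x_1,\dots,x_r,y_{i_1},\dots,y_{i_{n-r}}]\}$, with the $r!$ coming from the ordered-versus-unordered count exactly as in the curve case. The discrepancy between graph closures and the naive composite is automatically supported away from the generic point, hence dies upon restriction to $S^{[n]}\times\eta_{S^{[n]}}$; no identity among divisor products is needed. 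The Beauville--Voisin class $\mathfrak{o}$ enters only to make $\mu_-(\mathfrak{o})$ independent of the chosen representative (already recorded in the Remark preceding the Heisenberg relation) and, later, to make the resulting Chow--K\"unneth decomposition canonical. Drop that paragraph and your argument matches the paper's.
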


\begin{proof}
The proof is the same as in the case of $C^{[n]}$. Let $\xi=[y_1,\ldots,y_n]\in S^{[n]}$ be a general point. At the level of cycles, the intersection of $S^{[n]} \times \xi$ and a cycle representing $\mu_{+}(S)^{r}\circ \iota_r^*$ is supported on
\[
 \Set{[x_1,\ldots,x_r, y_{i_1}, \ldots, y_{i_{n-r}}]}{ 0\leq i_1< \cdots < i_{n-r}\leq n}
\]
Thus $\mu_{+}(\eta)^r\circ \mu_{-}(\mathfrak{o})^r$ is a multiple of $\gamma_r$. Then we argue as in the case of $C^{[n]}$ that the coefficient is $r!$. This establishes statement (i).

To prove (ii), we compose the homomorphism in question with the birational cycle class map. It turns out that the resulting map $\mathcal{A}'_n \rightarrow \oplus_{i=0}^{2n} \End(\HH^{i,0}(X))$ is injective.

The proof of (iii) and (iv) is the same as in Lemma \ref{lem Cn}.
\end{proof}

\begin{thm}\label{thm Hilb}
Let $S$ be a complex algebraic $K3$ surface. Then $S^{[n]}$ admits a canonical birational Chow--K\"unneth decomposition for all $n>0$.
\end{thm}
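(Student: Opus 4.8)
The plan is to follow the strategy of Lemma \ref{lem CK for Cn} closely, replacing the curve $C$ by the $K3$ surface $S$ and the point $x_0$ by the canonical Beauville--Voisin class $\mathfrak{o}_S$ of Theorem \ref{thm bv}. The role played there by the finite quotient $C^n\to C^{[n]}$ will here be played by the dominant rational map $f\colon S^n\dashrightarrow S^{[n]}$ of degree $n!$, obtained by composing the quotient $S^n\to S^{(n)}$ with the inverse of the Hilbert--Chow morphism $S^{[n]}\to S^{(n)}$; over the locus of $n$ distinct points this is an honest \'etale $n!$-cover, so $f$ is generically finite and the formalism of Corollary \ref{cor generic finite} applies. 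Since $S^{[n]}$ and $S^{(n)}$ are birational and the existence of a birational Chow--K\"unneth decomposition is a stably birational invariant (Proposition \ref{prop birational invariance}), it suffices to realize the projectors as correspondences on $S^{[n]}$, which I will locate inside the algebra $\mathcal{A}'_n$.

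First I would equip $S$ with the \emph{integral} Chow--K\"unneth decomposition
\[
\pi_S^0=\mathfrak{o}_S\times S,\qquad \pi_S^4=S\times\mathfrak{o}_S,\qquad \pi_S^2=\Delta_S-\pi_S^0-\pi_S^4 ,
\]
noting that identities (i)--(ii) of Theorem \ref{thm bv} are exactly what makes these orthogonal idempotents over $\Z$, and that $\HH^{1,0}(S)=\HH^{3,0}(S)=0$. Taking the $n$-fold product yields the integral decomposition $\pi_{S^n}^i=\sum_{i_1+\cdots+i_n=i}\pi_S^{i_1}\otimes\cdots\otimes\pi_S^{i_n}$ with each $i_j\in\{0,2,4\}$. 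I would push this forward along the graph closure of $f\times f$ and restrict to the generic point $\eta_{S^{[n]}}$ of the second factor. The key vanishing is the analogue of the one in Lemma \ref{lem CK for Cn}: any tensor factor equal to $\pi_S^4=S\times\mathfrak{o}_S$ pins a target coordinate to $\mathfrak{o}_S$, hence its push-forward is supported over a divisor of the target $S^{[n]}$ and restricts to $0$. Thus only terms with every $i_j\in\{0,2\}$ survive, forcing $i=2m$ with $0\le m\le n$, and all components with $i>2n$ vanish; meanwhile a factor $\pi_S^0=\mathfrak{o}_S\times S$ corresponds to fixing a source point at $\mathfrak{o}_S$, exactly the ingredient of the incidence classes $\gamma_r$.

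The heart of the argument is to identify the surviving push-forwards with multiples of elements of $\mathcal{A}'_n$. On the birational level $\pi_S^0$ restricts to $\mathfrak{o}_K$ and $\pi_S^2$ to $\delta_S-\mathfrak{o}_K$; after symmetrizing over the $n!$ orderings and carrying out the resulting inclusion--exclusion, the class $(f\times f)_*\pi_{S^n}^{2m}|_{S^{[n]}\times\eta_{S^{[n]}}}$ becomes $n!$ times an integral combination of the $\gamma_r$, i.e.\ an element of $n!\,\mathcal{A}'_n$. Summing over $m$ and using the analogue $(f\times f)_*\Delta_{S^n}|_{S^{[n]}\times\eta_{S^{[n]}}}=n!\,\delta_{S^{[n]}}$ of the relation $(f\times f)_*\Delta_{C^n}=n!\,\Delta_{C^{[n]}}$ gives $n!\,\delta_{S^{[n]}}=n!\sum_{m}\varpi_{S^{[n]}}^{2m}$; the injectivity $\mathcal{A}'_n\hookrightarrow\CH_0(S^{[n]}\times\eta_{S^{[n]}})$ from the lemma just proved then lets me cancel $n!$ integrally and places each $\varpi_{S^{[n]}}^{2m}$ in $\mathcal{A}'_n$. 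Orthogonality and idempotence follow formally from the multiplication rule $\gamma_r\circ\gamma_{r'}=\sum_i \frac{(r+r'-i)!}{i!(r-i)!(r'-i)!}\gamma_{r+r'-i}$, which has the same divided-power structure constants as in the curve case, so the same linear combinations are projectors. Finally I would check the normalization $[\varpi_{S^{[n]}}^{2m}]=\varpi_{S^{[n]},\mathrm{hom}}^{2m}$ by computing the action on the $S_n$-invariant holomorphic forms of $S^n$ (which compute $\HH^{*,0}(S^{[n]})$ birationally), using $\HH^{2m,0}(S^{[n]})=\C\cdot\sigma^m$ for the symplectic form $\sigma$ induced by $\HH^{2,0}(S)$, and setting the odd projectors to $0$ since $\HH^{\mathrm{odd},0}(S^{[n]})=0$. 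Canonicity is then automatic, as $\mathfrak{o}_S$ and the $\gamma_r$ are canonical.

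The main obstacle I anticipate is exactly this third step: making the passage from the product projectors $\pi_{S^n}^{2m}$ to an \emph{integral} combination of the $\gamma_r$ fully explicit, controlling the symmetrization-and-correction combinatorics, and ensuring the division by $n!$ is legitimate --- which rests entirely on the injectivity of $\mathcal{A}'_n$. A secondary point deserving care is verifying that $\pi_S^2$ is genuinely idempotent with $\Z$-coefficients, for which the two Beauville--Voisin identities of Theorem \ref{thm bv} are indispensable.
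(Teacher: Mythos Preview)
Your proposal is correct and follows essentially the same route as the paper: push the product Chow--K\"unneth decomposition of $S^n$ (built from $\pi_S^0=\mathfrak{o}_S\times S$, $\pi_S^4=S\times\mathfrak{o}_S$, $\pi_S^2=\Delta_S-\pi_S^0-\pi_S^4$) along the degree-$n!$ rational map $f\colon S^n\dashrightarrow S^{[n]}$, observe that the surviving terms land in $n!\,\mathcal{A}'_n$, and cancel $n!$ using the injectivity of $\mathcal{A}'_n$. The paper phrases the push-forward via $f_\#$ of Corollary~\ref{cor generic finite} rather than $(f\times f)_*$, and verifies the projector relations by invoking the injection of $\mathcal{A}'_n$ into cohomology (your proposed alternative via the structure constants also works, since $\mathcal{A}'_n$ is a subalgebra with the same divided-power multiplication as in the curve case). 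One small inaccuracy: the Beauville--Voisin identities are not needed for the \emph{idempotence} of $\pi_S^2$ over $\Z$---that holds for any degree-one point---but only for the \emph{canonicity} of the resulting decomposition.
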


\begin{proof}
Let $f: S^n\dashrightarrow S^{[n]}$ be the quotient rational map. The closure $\bar{\Gamma}_f$ of the graph of $f$ restricts to a well-defined element $\gamma_f\in \CH_0(S^n\times \eta_{S^{[n]}})$. The transpose ${}^t\bar{\Gamma}_f$ defines an element ${}^t\gamma_f\in \CH_0(S^{[n]} \times \eta_{S^n})$. Then we have $f_{\#}\delta_{S^n}=\gamma_f\circ {}^t\gamma_f = n!\delta_{S^{[n]}}$; see Corollary \ref{cor generic finite}. Let
\[
 \pi_S^0 = \mathfrak{o}_S\times S,\quad\pi^1_S=\pi_S^3=0,\quad \pi_S^4 = S\times \mathfrak{o}_S,\quad \pi_S^2 = \Delta_S - \pi^0_S-\pi^4_S
\]
be the canonical Chow--K\"unneth decomposition of $S$. It gives rise to a product Chow--K\"unneth decomposition of $S^n$ given by
\[
\pi^j_{S^n} :=\sum_{i_1+\cdots+i_n=j}\pi_S^{i_1} \otimes \cdots \otimes \pi_S^{i_n}, \quad 0\leq j\leq 4n
\]
When we restrict to $S^{n}\times \eta_{S^n}$, we get the birational Chow--K\"unneth decomposition
\[
 \varpi_{S^n}^j = \sum_{i_1+\cdots+i_n=j} \varpi_S^{i_1}\otimes \cdots \varpi_S^{i_n},\quad 0\leq j\leq 2n.
\]
We note, as in the case of $C^{[n]}$, that $f_{\#}\varpi_{S^n}^j$ is a linear conbination of $\gamma_r\in \mathcal{A}'_n$ with coefficients divisible by $n!$. Hence we can write
\[
 f_\# \varpi_{S^n}^j = n! \varpi_{S^{[n]}}^j
\]
for some $\varpi_{S^{[n]}}^j\in \mathcal{A}'_n$. It is easy to see that the action of $\varpi_{S^{[n]}}^i$ on the cohomology groups $\HH^{i,0}(S^{[n]})$ is exactly the expected one. Since $\mathcal{A}'_n$ injects into cohomology, we see that $\varpi_{S^{[n]}}^j$ are actual projectors and
\[
\delta_{S^{[n]}} = \sum_{j=0}^{2n}\varpi_{S^{[n]}}^j,\quad\text{in }\CH_0(S^{[n]} \times \eta_{S^{[n]}}).
\]
Thus these projectors form a birational Chow--K\"unneth decomposition.
\end{proof}

\begin{rmk}
The construction also gives that $\varpi_{S^{[n]}}^j=0$ for all odd $j$. The induced decomposition of $\CH_0(S^{[n]})$ is the same as the one considered by Voisin \cite{voisin isotropy}.
\end{rmk}

\section{The case of $S^{[2]}$}
In the case of $F=S^{[2]}$, we show that an integral Chow--K\"unneth decomposition exists.

Let $S$ be an algebraic complex $K3$ surface. By Theorem \ref{thm bv} we know that $\CH_0(S)$ contains a canonical degree one element $\mathfrak{o}=\mathfrak{o}_S$, which is the class of a point on any rational curve. For a point $x\in S$, there is an associated smooth surface $S_x\subset S^{[2]}$ which is the closure of all points $[x,y]$, where $y\in S-\{x\}$. We also use $S_x$ to denote its class in $\CH_2(S^{[2]})$. Hence $\CH_2(S^{[2]})$ contains a canonical class $S_{\mathfrak{o}}$. Let
\[
I=\Set{(Z_1,Z_2)\in S^{[2]}\times S^{[2]}}{Z_1\cap Z_2 \neq \emptyset}
\]
be the incidence correspondence. There is an induced canonical class $\mathfrak{o}_F = [\mathfrak{o}, \mathfrak{o}]$ of degree one in $\CH_0(S^{[2]})$. Consider the diagram
\[
\xymatrix{
 E\ar[r]^{j\quad}\ar[d]_{\pi} &\widetilde{S\times S}\ar[d]^\rho \ar[r]^\sigma &S^{[2]}\\
 S\ar[r]^{\iota\quad} &S\times S &
}
\]
where $\iota$ is the diagonal embedding, $\rho$ is the blow-up along diagonal and $E$ is the exceptional divisor. This allows us to define a canonical element
\[
E_{\mathfrak{o}} =\sigma_* j_*\pi^*\mathfrak{o}_S \in \CH_1(S^{[2]}).
\]
 Let $\delta\in \CH^1(F)$ be the half diagonal, namely $\sigma^*\delta = j_*E$. The following is a result of \cite[Chapter 13]{shen-vial} made explicit.

\begin{prop}\label{prop CK S2}
The Hyperk\"ahler variety $F=S^{[2]}$ admits an integral Chow--K\"unneth decomposition
\begin{equation}\label{eq CK S2}
 \Delta_F = \pi^0_F + \pi_F^2 + \pi_F^4 +\pi_F^6 + \pi_F^8,
\end{equation}
where
\begin{align*}
 \pi_F^0 & = \mathfrak{o}_F\times F,\\
 \pi_F^2 & = I\cdot p_1^*S_{\mathfrak{o}} - 2\mathfrak{o}_F \times F - 2S_{\mathfrak{o}} \times S_{\mathfrak{o}} - E_{\mathfrak{o}} \times \delta,\\
 \pi_F^8 & = F \times \mathfrak{o}_F,\\
 \pi_F^6 &= {}^t\pi_F^2,\\
 \pi_F^4 &= \Delta_X - \pi_F^0 -\pi_F^2 -\pi_F^6 -\pi_F^8.
\end{align*}
\end{prop}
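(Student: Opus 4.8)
The plan is to verify directly that the five classes $\pi_F^i$ are mutually orthogonal idempotents in $\CH_4(F\times F)$ whose cohomology classes are the K\"unneth projectors. Several reductions trim the work. As $F=S^{[2]}$ is a hyperk\"ahler fourfold, its odd cohomology vanishes, so only even projectors occur. The outer projectors $\pi_F^0=\mathfrak{o}_F\times F$ and $\pi_F^8=F\times\mathfrak{o}_F={}^t\pi_F^0$ are attached to the degree-one zero-cycle $\mathfrak{o}_F$, and using $\deg\mathfrak{o}_F=1$ together with the elementary composition rule $(c\times d)\circ(a\times b)=\deg(b\cdot c)\,(a\times d)$ one checks at once that each is idempotent, that they are mutually orthogonal, and that $[\pi_F^0]$, $[\pi_F^8]$ are the projectors onto $\HH^0$, $\HH^8$. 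Since $\pi_F^6={}^t\pi_F^2$, every statement about $\pi_F^6$ is the transpose of one about $\pi_F^2$. Finally, once $\pi_F^0,\pi_F^2,\pi_F^6,\pi_F^8$ are known to be mutually orthogonal idempotents, the complementary class $\pi_F^4=\Delta_F-\pi_F^0-\pi_F^2-\pi_F^6-\pi_F^8$ is automatically an idempotent orthogonal to the other four by the usual formal computation. Thus the proposition reduces to: (a) the cohomology class of $\pi_F^2$ is the K\"unneth projector onto $\HH^2$; (b) $\pi_F^2\circ\pi_F^2=\pi_F^2$; and (c) $\pi_F^2$ is orthogonal to $\pi_F^0$, $\pi_F^8$ and $\pi_F^6$.

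For (a) I would exploit that $A:=I\cdot p_1^*S_{\mathfrak{o}}$ lies in $\CH^4(F\times F)$ and hence acts on $\HH^*(F)$ preserving degree, via $A_*\alpha=p_{2,*}\big(I\cdot p_1^*(S_{\mathfrak{o}}\cdot\alpha)\big)$. Because cup product with $S_{\mathfrak{o}}$ raises codimension by two, $A_*$ is supported on $\HH^0$, $\HH^2$ and $\HH^4$, and the three decomposable correction terms are tailored exactly to these pieces. The key geometric input is that the fibre of $I$ over a point $[x,y]\in F$ is the union $S_x\cup S_y$, of class $2\,S_{\mathfrak{o}}$; this immediately gives $A_*[F]=2\,[F]$ on $\HH^0$ and $A_*\alpha=2\deg(S_{\mathfrak{o}}\cdot\alpha)\,S_{\mathfrak{o}}$ on $\HH^4$. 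Since $\mathfrak{o}_F\times F$ acts as the identity on $\HH^0$ and $S_{\mathfrak{o}}\times S_{\mathfrak{o}}$ acts on $\HH^4$ by $\alpha\mapsto\deg(S_{\mathfrak{o}}\cdot\alpha)\,S_{\mathfrak{o}}$, subtracting $2\,\mathfrak{o}_F\times F$ and $2\,S_{\mathfrak{o}}\times S_{\mathfrak{o}}$ kills the $\HH^0$- and $\HH^4$-contributions of $A$. It remains to check that on $\HH^2$ the operator $A_*$ differs from the identity only by the rank-one term $\alpha\mapsto\deg(E_{\mathfrak{o}}\cdot\alpha)\,\delta$ realized by $E_{\mathfrak{o}}\times\delta$; this is the substantive cohomological computation, controlled by the class $[I]\in\HH^4(F\times F)$ together with the Beauville--Bogomolov form and the Fujiki relation on $\HH^2(F)=\HH^2(S)\oplus\Z\,\delta$.

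For (b) and (c) I would expand every composition and reduce it to intersection products among the distinguished classes $\mathfrak{o}_F$, $S_{\mathfrak{o}}$, $E_{\mathfrak{o}}$, $\delta$ and $I$. The compositions among the three decomposable summands of $\pi_F^2$, and of these with $\pi_F^0$, $\pi_F^6$, $\pi_F^8$, are dictated by the pairing rule above, hence by the intersection numbers $\deg(S_{\mathfrak{o}}\cdot S_{\mathfrak{o}})$, $\deg(E_{\mathfrak{o}}\cdot\delta)$ and $\deg\mathfrak{o}_F$, all of which lie in the distinguished (Beauville--Voisin) subring of $\CH(S^{[2]})$ with explicit integer values. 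The genuinely new computations are the self-composition $A\circ A$ and the mixed terms $A\circ(\mathfrak{o}_F\times F)$, $A\circ(S_{\mathfrak{o}}\times S_{\mathfrak{o}})$, $A\circ(E_{\mathfrak{o}}\times\delta)$ and their opposites; for these I would use the projection-formula identities for $I\cdot(S_{\mathfrak{o}}\times F)$ and $I\cdot(F\times S_{\mathfrak{o}})$, the restriction of $I$ to a fibre $S_x\times F$, and the quadratic relation satisfied by the incidence correspondence on $F\times F$ established in \cite{shen-vial}. Since all intermediate cycles stay inside the distinguished subring, the required relations can be read off from the multiplication table of that ring.

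The main obstacle is carrying out these incidence computations \emph{integrally}: over $\Q$ the existence of the decomposition is guaranteed by \cite[Chapter 13]{shen-vial}, so the real content is that the explicit correction terms $-2\,\mathfrak{o}_F\times F$, $-2\,S_{\mathfrak{o}}\times S_{\mathfrak{o}}$ and $-E_{\mathfrak{o}}\times\delta$ make every unwanted summand cancel on the nose, with no residual class divisible by $2$ or $3$. I expect the decisive inputs to be the exact integral intersection relations in the distinguished subring, in particular the value of $\deg(S_{\mathfrak{o}}\cdot S_{\mathfrak{o}})$ and of $\deg(\delta\cdot E_{\mathfrak{o}})$ and the self-intersection behaviour of $\delta$ recorded by the Fujiki constant $3$ of $S^{[2]}$, since it is precisely these numbers that govern whether the cancellations are integral rather than merely rational.
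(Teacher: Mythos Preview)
Your overall strategy coincides with the paper's: reduce to checking that $\pi_F^2$ has the right cohomology class, is idempotent, and is orthogonal to $\pi_F^0,\pi_F^6,\pi_F^8$, and do this by tabulating the pairwise compositions among the four building blocks $A:=I\cdot p_1^*S_{\mathfrak{o}}$, $\mathfrak{o}_F\times F$, $S_{\mathfrak{o}}\times S_{\mathfrak{o}}$, $E_{\mathfrak{o}}\times\delta$. The paper does precisely this, listing all sixteen compositions and reading off the identities.

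Where you diverge is in the engine powering the two nontrivial computations, the cohomology class of $A$ and the self-composition $A\circ A$. You plan to use the fibre description of $I$, the Beauville--Bogomolov form, the Fujiki constant, and a ``quadratic relation for the incidence correspondence'' from \cite{shen-vial}. The paper instead exploits the single identity $I=(Z\times Z)^*\Delta_S$ in $\CH^2(F\times F)$, where $Z\cong\widetilde{S\times S}$ is the universal family viewed as a correspondence between $F$ and $S$. Substituting the Beauville--Voisin decomposition $\Delta_S=\mathfrak{o}\times S+\pi_S^2+S\times\mathfrak{o}$ yields $I=2\,S_{\mathfrak{o}}\times F+(Z\times Z)^*\pi_S^2+2\,F\times S_{\mathfrak{o}}$ and hence $A=2\,\mathfrak{o}_F\times F+p_1^*S_{\mathfrak{o}}\cdot(Z\times Z)^*\pi_S^2+2\,S_{\mathfrak{o}}\times S_{\mathfrak{o}}$. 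From here both the cohomology class (via an integral basis of $\HH^2(S,\Z)$ and the dual basis, giving $[\pi_F^2]=\sum_i\hat{\mathfrak a}_i^\vee\otimes\hat{\mathfrak a}_i+\delta^\vee\otimes\delta$) and the key identity $A\circ A=A+2\,\mathfrak{o}_F\times F+2\,S_{\mathfrak{o}}\times S_{\mathfrak{o}}$ reduce to $\pi_S^2\circ\pi_S^2=\pi_S^2$ on $S$, so the integral cancellations you flag as the main obstacle become automatic. Your route should also succeed, but the quadratic relation you cite is really a feature of the cubic-fourfold side of \cite{shen-vial} rather than of $S^{[2]}$; the identity $I=(Z\times Z)^*\Delta_S$ is both the correct substitute and a substantial simplification.
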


\begin{cor}
The hyperk\"ahler variety $S^{[2]}$ admits a birational Chow--K\"unneth decomposition.
\end{cor}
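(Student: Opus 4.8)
The plan is to produce the birational Chow--K\"unneth decomposition by restricting the integral Chow--K\"unneth decomposition \eqref{eq CK S2} of Proposition \ref{prop CK S2} to the generic point of the second factor. Recall (see Lemma \ref{lem composing 0-cycles} and the definition of the composition law on $\CH_0(F_K)$) that the restriction map $\CH_4(F\times F)\to\CH_0(F_K)$, $\Gamma\mapsto\Gamma|_{F\times\eta_F}$, carries $\Delta_F$ to $\delta_F$ and is compatible with composition, so that $(\Gamma\circ\Gamma')|_{F\times\eta_F}=(\Gamma|_{F\times\eta_F})\circ(\Gamma'|_{F\times\eta_F})$. Setting $\varpi_F^i:=\pi_F^i|_{F\times\eta_F}$ for $i\in\{0,2,4,6,8\}$, the orthogonality and idempotency relations among the $\pi_F^i$ therefore restrict to $\varpi_F^i\circ\varpi_F^j=0$ for $i\neq j$ and $\varpi_F^i\circ\varpi_F^i=\varpi_F^i$, and \eqref{eq CK S2} itself restricts to
\[
\delta_F=\varpi_F^0+\varpi_F^2+\varpi_F^4+\varpi_F^6+\varpi_F^8,\quad\text{in }\CH_0(F_K).
\]
So the restricted classes already form a complete orthogonal system of idempotents; the only remaining issue will be to reconcile this with the indexing of a birational Chow--K\"unneth decomposition, which runs over $0\le i\le\dim F=4$ and records the action on the holomorphic forms $\HH^{i,0}(F)$.

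The key point will be that the two top projectors restrict to zero. First, $\pi_F^8=F\times\mathfrak{o}_F$ is supported over a closed point of the second factor, hence $\pi_F^8|_{F\times\eta_F}=0$. Second, transposing the explicit formula for $\pi_F^2$ gives $\pi_F^6={}^t\pi_F^2=I\cdot p_2^*S_{\mathfrak{o}}-2\,F\times\mathfrak{o}_F-2\,S_{\mathfrak{o}}\times S_{\mathfrak{o}}-\delta\times E_{\mathfrak{o}}$, and each summand is supported on $F\times V$ for some proper closed $V\subsetneq F$ (for the first summand because $p_2^*S_{\mathfrak{o}}$ is supported on $F\times\Supp(S_{\mathfrak{o}})$). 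A cycle supported on such an $F\times V$ misses the generic fibre of the second projection and so restricts to zero --- this is exactly the vanishing used in the construction of the birational cycle class map \eqref{eq birational cycle class} --- whence $\pi_F^6|_{F\times\eta_F}=0$. Putting in addition $\varpi_F^1=\varpi_F^3=0$, which is forced since $\HH^{1,0}(F)=\HH^{3,0}(F)=0$, one obtains $\delta_F=\sum_{i=0}^4\varpi_F^i$ with the $\varpi_F^i$ a complete orthogonal system of idempotents indexed by $0\le i\le 4$.

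It then remains to verify condition (iii) of Definition \ref{defn birational CK}. Since $\pi_F^i$ lifts the K\"unneth projector onto $\HH^i(F)$, its action on $\bigoplus_j\HH^{j,0}(F)$ via the birational cycle class map \eqref{eq birational cycle class} is the projection onto the Hodge summand $\HH^{i,0}(F)\subset\HH^i(F)$; that is, $[\varpi_F^i]=\varpi^i_{F,\mathrm{hom}}$ for every $0\le i\le 4$. Thus the $\varpi_F^i$ would constitute a birational Chow--K\"unneth decomposition of $F=S^{[2]}$. I expect the only step needing input beyond the formal machinery developed earlier to be the vanishing of $\pi_F^6|_{F\times\eta_F}$ and $\pi_F^8|_{F\times\eta_F}$, which genuinely uses the explicit description of the projectors in Proposition \ref{prop CK S2}; everything else is automatic. (Existence alone is of course already the case $n=2$ of Theorem \ref{thm Hilb}; the present argument shows that it is induced by the finer \emph{integral, full} Chow--K\"unneth decomposition \eqref{eq CK S2}.)
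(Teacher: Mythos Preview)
Your argument is correct and is precisely the implicit justification the paper has in mind: the corollary is stated immediately after Proposition~\ref{prop CK S2} with no written proof, the intended reading being that an integral Chow--K\"unneth decomposition restricts to a birational one.

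One small comment on your closing remark that the vanishing of $\varpi_F^6$ and $\varpi_F^8$ ``genuinely uses the explicit description of the projectors'': in fact it does not. Instead of proving $\varpi_F^6=\varpi_F^8=0$, you can simply set $\tilde\varpi_F^4:=\varpi_F^4+\varpi_F^6+\varpi_F^8$. A sum of mutually orthogonal idempotents is again an idempotent, orthogonal to the remaining $\varpi_F^0,\varpi_F^2$; and since $\HH^{6,0}(F)=\HH^{8,0}(F)=0$ one has $[\tilde\varpi_F^4]=[\varpi_F^4]=\varpi^4_{F,\mathrm{hom}}$. This argument shows quite generally that \emph{any} integral Chow--K\"unneth decomposition induces a birational one, with no need to inspect the projectors. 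Your direct verification that $\varpi_F^6=\varpi_F^8=0$ is of course also valid and gives slightly more information.
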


\begin{proof}[Proof of Proposition \ref{prop CK S2}]
Let $\mathfrak{a}_i$, $i=1,2,\ldots, 22$, be an integral basis of $\HH^2(S,\Z)$. We write 
\[
\hat{\mathfrak{a}}_i = \sigma_*\rho^*(\mathfrak{a}_i \otimes 1) = Z^*\mathfrak{a}_i,\quad I=1,\ldots,22,
\]
where $Z\cong \widetilde{S\times S}$ is the universal family over $F$ viewed as a correspondence between $F$ and $S$. Then
$\{\delta,\hat{\mathfrak{a}}_1,\ldots,\hat{\mathfrak{a}}_{22}\}$ form an integral basis of $\HH^2(F,\Z)$. We view $Z\times Z$ as a correspondence between $F\times F$ and $S\times S$. Then
\[
 I = (Z\times Z)^* \Delta_S,\quad \text{ in }\CH^2(F\times F).
\]
Note that $S$ admits an integral multiplicative Chow--K\"unneth decomposition
\[
 \Delta_S = \mathfrak{o}\times S + \pi_S^2 + S\times \mathfrak{o}.
\]
It follows that
\[
 I = 2S_{\mathfrak{o}} \times F + (Z\times Z)^*\pi_S^2 +2 F\times S_{\mathfrak{o}}.
\]
Let $A=(a_{ij})$ be the intersection matrix of $\HH^2(S,\Z)$ with respect to the given basis. Let $B=(b_{ij})=A^{-1}$. Then the cohomomolgy class of $\pi_S^2$ is given by $\sum b_{ij}\mathfrak{a}_i\otimes \mathfrak{a}_j$. As a result, we have
\[
[I] =2 [S_{\mathfrak{o}}]\otimes [F] + \sum b_{ij}\hat{\mathfrak{a}}_i\otimes \hat{\mathfrak{a}}_j +2 [F]\otimes [S_{\mathfrak{o}}],\quad \text{ in }\HH^4(F\times F).
\]
A direct computation gives
\[
I\cdot p_1^*S_{\mathfrak{o}} = 2 \mathfrak{o}_F \times F + p_1^*S_{\mathfrak{o}} \cdot (Z\times Z)^*\pi_S^2 + 2 S_{\mathfrak{o}} \times S_{\mathfrak{o}}.
\]
Here we use the fact that $S_x\cdot S_y =[x,y]$, and in particular $S_\mathfrak{o}\cdot S_{\mathfrak{o}}= \mathfrak{o}_F$. The cohomology class of $S_\mathfrak{o}$ satisfies the following conditions
\[
[S_{\mathfrak{o}}]\cdot\hat{\mathfrak{a}}_i\cdot\hat{\mathfrak{a}}_j = a_{ij},\quad [S_{\mathfrak{o}}]\cdot\hat{\mathfrak{a}}_i\cdot \delta = 0,
\quad [S_{\mathfrak{o}}]\cdot\delta\cdot\delta=-1.
\]
It follows that $\sum_j b_{ij} [S_{\mathfrak{o}}]\cdot \hat{\mathfrak{a}}_j = \hat{\mathfrak{a}}_i^\vee$ and $-[E_\mathfrak{o}] = \delta^\vee$. Thus the cohomology class of $p_1^*S_{\mathfrak{o}}\cdot (Z\times Z)^*\pi_S^2$ is $\sum_i \hat{\mathfrak{a}}_i^\vee \otimes \hat{\mathfrak{a}}_i$. Hence the cohomology class of
\[
 \pi^2_F = I\cdot p_1^*S_{\mathfrak{o}} -2 \mathfrak{o}_F \times F - 2S_{\mathfrak{o}} \times S_{\mathfrak{o}} - E_{\mathfrak{o}} \times \delta
\]
is the cohomological projector $\pi_{F,\mathrm{hom}}^2$. One similarly checks that all the $\pi_F^i$ lifts the corresponding cohomological projectors. 

We still need to show that these are projectors as correspondences. This can be checked directly. For example, we have
\begin{align*}
 \big( I\cdot p_1^*S_{\mathfrak{o}} \big) \circ \big( I\cdot p_1^*S_{\mathfrak{o}} \big) = I\cdot p_1^*S_{\mathfrak{o}} + 2\mathfrak{o}_F\times F + 2 S_{\mathfrak{o}} \times S_{\mathfrak{o}}, \qquad  
 & \big( I\cdot p_1^*S_{\mathfrak{o}} \big) \circ  \big( \mathfrak{o}_F\times F \big) = 2\mathfrak{o}_F\times F,\\
   \big( I\cdot p_1^*S_{\mathfrak{o}} \big) \circ \big( S_{\mathfrak{o}} \times S_{\mathfrak{o}} \big) = 2  S_{\mathfrak{o}} \times S_{\mathfrak{o}},\qquad 
   & \big( I\cdot p_1^*S_{\mathfrak{o}} \big) \circ \big( E_{\mathfrak{o}} \times \delta \big) = 0, \\   
 \big( \mathfrak{o}_F\times F \big) \circ \big( I\cdot p_1^*S_{\mathfrak{o}} \big) =  2 \mathfrak{o}_F\times F,\qquad & \big( \mathfrak{o}_F\times F \big) \circ \big( \mathfrak{o}_F\times F \big) = \mathfrak{o}_F\times F,\\
 \big( \mathfrak{o}_F\times F \big) \circ \big(S_{\mathfrak{o}} \times S_{\mathfrak{o}} \big) = 0,\qquad 
 &\big( \mathfrak{o}_F\times F \big) \circ \big( E_{\mathfrak{o}} \times \delta \big) = 0,\\
  \big(S_{\mathfrak{o}} \times S_{\mathfrak{o}} \big) \circ (I\cdot p_1^*S_{\mathfrak{o}}) = 2 S_{\mathfrak{o}} \times S_{\mathfrak{o}},\qquad & \big(S_{\mathfrak{o}} \times S_{\mathfrak{o}} \big) \circ \big( \mathfrak{o}_F\times F \big) = 0, \\
  \big(S_{\mathfrak{o}} \times S_{\mathfrak{o}} \big) \circ \big(S_{\mathfrak{o}} \times S_{\mathfrak{o}} \big) = S_{\mathfrak{o}} \times S_{\mathfrak{o}} ,\qquad 
  & \big(S_{\mathfrak{o}} \times S_{\mathfrak{o}} \big) \circ \big( E_{\mathfrak{o}} \times \delta \big) = 0,\\
  \big( E_{\mathfrak{o}} \times \delta \big) \circ \big( I\cdot p_1^*S_{\mathfrak{o}} \big) = 0,\qquad
  & \big( E_{\mathfrak{o}} \times \delta \big) \circ  \big( \mathfrak{o}_F\times F \big) =0,\\
   \big( E_{\mathfrak{o}} \times \delta \big) \circ  \big(S_{\mathfrak{o}} \times S_{\mathfrak{o}} \big) =0, \qquad & \big( E_{\mathfrak{o}} \times \delta \big) \circ \big( E_{\mathfrak{o}} \times \delta \big) = - E_{\mathfrak{o}} \times \delta.
\end{align*}
From the above identities, we get $\pi_F^i\circ \pi^j_F =0$, for all $i\neq j$ and $\pi_F^i\circ \pi_F^i = \pi_F^i$.
\end{proof}

\begin{rmk}
A brute force computation or a universality argument as in \cite{vial} should also show that the above Chow--K\"unneth decomposition is multiplicative.
\end{rmk}

\section{Cubic threefolds and cubic fourfolds}

In this section we prove that the variety of lines on a cubic threefold or a cubic fourfold admits a birational Chow--K\"unneth decomposition when the integer $2$ is inverted.

Let $X\subseteq \PP^{d+1}_{\C}$ be a smooth cubic hypersurface of dimension $d=3$ or $4$. Let $h\in\CH^1(X)$ the class of a hyperplane section. Let $F=F(X)$ be the variety of lines on $X$, which is known to be smooth projective of dimension $2d-4$. For a line $l\subset X$, the corresponding point on $F$ is denoted $[l]$. For any point $t\in F$, the corresponding line is denoted $l_t\subset X$. We define
\[
 I = \Set{(t,t')\in F\times F}{l_t \cap l_{t'} \neq \emptyset}
\]
to be the incidence correspondence. Let
\[
\xymatrix{
 P\ar[r]^q\ar[d]_p &X\\
 F &
}
\]
be the universal family of lines on $X$. This induces homomorphisms
\begin{align*}
 \Phi&=p_*q^*: \CH_1(X)\longrightarrow \CH_2(F)\\
 \Psi &=q_*p^*: \CH_0(F)\longrightarrow \CH_1(X)
\end{align*}
One easily checks that $I_* = \Phi\circ\Psi$. The following result was proved in \cite{relation} for $K$ being algebraically closed. The proof there also works for non-closed fields.

\begin{prop}[\cite{relation, pt}]\label{prop relation}
Let $K\supset \C$ be a field. Then
\[
 \Psi\big( \Phi(\mathfrak{a})\cdot \Phi(\mathfrak{b}) \big) + 2\deg(\mathfrak{a})\mathfrak{b} + 2\deg(\mathfrak{b})\mathfrak{a} - 3\deg(\mathfrak{a})\deg(\mathfrak{b})h^{d-1} = 0,\quad \text{in }\CH_1(X_K)
 \] 
 for all $\mathfrak{a},\mathfrak{b}\in \CH_1(X_K)$, where $\Phi$ and $\Psi$ are their base change to $K$.\hfill $\square$
\end{prop}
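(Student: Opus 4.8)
The plan is to argue that the proof given in \cite{relation} over an algebraically closed field is insensitive to algebraic closedness, so that it yields the integral identity verbatim over any $K\supset\C$. First I would record that every operation entering the identity is induced by a cycle with integral $\C$-coefficients: $\Phi$ and $\Psi$ are the actions of the universal family $[P]\in\CH(F\times X)$ and its transpose, the product $\Phi(\fa)\cdot\Phi(\fb)$ is pullback along the diagonal of $F$, and the terms $\deg(\fa)\fb$, $\deg(\fb)\fa$ and $\deg(\fa)\deg(\fb)h^{d-1}$ are the actions of cycles assembled from $\Delta_X$, $h$ and $h^{d-1}$. All of these commute with base change along $\C\hookrightarrow K$. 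Writing $R(\fa,\fb)$ for the left-hand side, this already shows that $R(\fa,\fb)$ maps to $R(\fa_{\bar K},\fb_{\bar K})=0$ in $\CH_1(X_{\bar K})$, so $R(\fa,\fb)$ lies in the kernel of $\CH_1(X_K)\to\CH_1(X_{\bar K})$.

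This naive base change is, however, not enough: that kernel is torsion (for a finite subextension $L/K$ the composite of pullback and pushforward is multiplication by $[L:K]$, and $\CH_1(X_{\bar K})$ is the direct limit of the $\CH_1(X_L)$), whereas we want an integral identity. I would therefore revisit the actual computation. The whole assignment $(\fa,\fb)\mapsto R(\fa,\fb)$ is the action on $\fa\times\fb$ of an explicit correspondence $\Xi\in\CH(X\times X\times X)$ with integral $\C$-coefficients, built from $[P]$, the diagonal of $F$, $\Delta_X$, $h$ and $h^{d-1}$; the content of Proposition \ref{prop relation} is that this fixed $\C$-cycle acts as zero on $\CH_1$. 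Since the action of a fixed cycle is computed by one and the same chain of flat pullbacks, proper pushforwards, refined intersections and projection formulas over any base field, the computation of \cite{relation} is literally a sequence of identities of $K$-cycles with integral coefficients, in which no step divides or invokes a torsion-only descent. By bilinearity it suffices to run it on generators $\fa=[A]$, $\fb=[B]$ with $A,B\subset X_K$ integral curves; the geometric input—the locus of lines meeting $A$, its refinement to the lines meeting both $A$ and $B$ governed by the incidence $I$ and the relation $I_*=\Phi\circ\Psi$, and the residual-line (triangle) construction on the cubic that produces the coefficients $2,2,-3$—is defined purely by equations over $K$ and uses no $K$-rational points.

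The remaining point, and the one I expect to be the crux, is to make sure the general-position steps of \cite{relation} survive over a non-closed field: the moving lemma giving $\Phi(\fa)\cdot\Phi(\fb)$ its expected dimension, and the generic-transversality arguments making $q^*$ and the diagonal pullback dimensionally correct. All of these require only that the ground field be infinite, which holds since $K\supset\C$, so they apply unchanged to $X_K$ and to the $K$-varieties $F$, $P$ and $F\times F$. Once this is checked, the identity is obtained over $K$ with $\Z$-coefficients and the torsion ambiguity of the first paragraph evaporates. I do not expect a genuinely new difficulty beyond auditing the cited proof for hidden uses of algebraic closedness; the only subtlety worth flagging is that over $K$ an integral curve need not be geometrically integral, but the operations $\Phi$, $\Psi$ and the intersection product are blind to this, so the generator-by-generator computation is unaffected.
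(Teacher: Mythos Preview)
Your proposal is correct and matches the paper's approach exactly: the paper gives no proof beyond citing \cite{relation,pt} and remarking that ``the proof there also works for non-closed fields,'' and your proposal is precisely the audit of that claim. If anything, you are more careful than the paper, which simply asserts the extension without justification.
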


\begin{lem}\label{lem o}
The following statements are true.
\begin{enumerate}[(i)]
\item The kernel of $\Psi$ is a uniquely divisible subgroup of $\CH_0(F)$.
\item If $\mathfrak{a}\in \CH_1(X)$ is a torsion element and $\mathfrak{b}\in \CH_1(X)$ is an element of degree 0, then $\Phi(\mathfrak{a})\cdot \Phi(\mathfrak{b}) =0$ in $\CH_0(F)$.
\item If $\mathfrak{o}\in\CH_0(F)$ is an element of degree 1, then 
\[
 I_*\gamma \cdot I_*\mathfrak{o} = -2\gamma,
\]
for every torsion element $\gamma\in \CH_0(F)$.
\item There exists an element $\mathfrak{o}=\mathfrak{o}_F \in \CH_0(F)$ of degree 1 such that
\begin{align*}
 (I_*\mathfrak{o})^2 &= 5\,\mathfrak{o},\quad\text{in }\CH_0(F),\qquad \text{ and }\\
 3 \Psi(\mathfrak{o}) &= h^{d-1}, \quad\text{in }\CH_1(X).
\end{align*}
When $d=3$, the element $\mathfrak{o}$ is unique upto the translation by an element of 3-torsion. When $d=4$, the element $\mathfrak{o}$ is unique.
\end{enumerate}
\end{lem}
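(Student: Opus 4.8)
The engine of the whole lemma is the quadratic relation of Proposition \ref{prop relation}: applying $\Psi$ to a product $\Phi(\mathfrak{a})\cdot\Phi(\mathfrak{b})$ converts a genuinely quadratic identity among $0$-cycles on $F$ into a \emph{linear} identity in $\CH_1(X_K)$. Two elementary observations feed into every step. First, $\deg\Psi\gamma=\deg\gamma$, because $\Psi$ sends the class of a point of $F$ to the class of the corresponding line, which has degree $1$. Second, since $I_*=\Phi\circ\Psi$, the operator $I_*$ annihilates $\ker\Psi$; in particular $I_*\beta=0$ for every $\beta\in\ker\Psi$, and $I_*\gamma$ is torsion whenever $\gamma$ is. The plan is to prove (i) first and then to reduce (ii), (iii) and the existence part of (iv) to it, by computing the $\Psi$-image of each asserted identity and invoking the torsion-freeness of $\ker\Psi$.

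For (i) I would argue in two steps: divisibility and torsion-freeness. Divisibility of the ambient group $\CH_0(F)_{\mathrm{hom}}$ is standard over $\C$ (every homologically trivial $0$-cycle is supported on a curve, whose Jacobian is divisible). Given $\gamma\in\ker\Psi$ and $n\ge 1$, choose $\gamma''$ with $n\gamma''=\gamma$; then $\Psi\gamma''$ is $n$-torsion, and one must correct $\gamma''$ by an $n$-torsion cycle mapping to $\Psi\gamma''$ under $\Psi$. This reduces everything to understanding $\Psi$ on torsion, which I would handle case by case via Roitman's theorem. For $d=4$ the variety $F$ is hyperK\"ahler, so $\Alb(F)=0$ and $\CH_0(F)$ is torsion-free; moreover $\HH^3(X)=0$ forces $\CH_1(X)_{\mathrm{hom}}$ to be torsion-free (Bloch--Srinivas), so $\Psi\gamma''=0$ outright and divisibility is immediate. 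For $d=3$ one has $\Alb(F)=J(X)$, the intermediate Jacobian, and the classical identification of the Fano surface shows that $\Psi$ induces a bijection on the divisible torsion groups $J(X)_{\mathrm{tors}}$ on both sides; this gives both torsion-freeness of $\ker\Psi$ and the required lifting of torsion. This torsion bookkeeping is the main obstacle.

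Granting (i), the rest is formal. For (ii), the relation gives $\Psi(\Phi(\mathfrak{a})\cdot\Phi(\mathfrak{b}))=0$ once $\deg\mathfrak{a}=\deg\mathfrak{b}=0$, while $\Phi(\mathfrak{a})\cdot\Phi(\mathfrak{b})$ is torsion because $\Phi(\mathfrak{a})$ is; a torsion element of the torsion-free group $\ker\Psi$ vanishes. For (iii), taking $\mathfrak{a}=\Psi\gamma$ (torsion, degree $0$) and $\mathfrak{b}=\Psi\mathfrak{o}$ (degree $1$) the relation yields $\Psi(I_*\gamma\cdot I_*\mathfrak{o})=-2\Psi\gamma=\Psi(-2\gamma)$, so $I_*\gamma\cdot I_*\mathfrak{o}+2\gamma\in\ker\Psi$ is torsion, hence zero.

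Finally (iv). I would first produce a degree-$1$ cycle $\mathfrak{o}_0$ with $3\Psi\mathfrak{o}_0=h^{d-1}$. Applying the relation to $\mathfrak{a}=\mathfrak{b}=\Psi\mathfrak{e}$ for any degree-$1$ cycle $\mathfrak{e}$ gives $3h^{d-1}=\Psi(\zeta)$ with $\zeta=(I_*\mathfrak{e})^2+4\mathfrak{e}$; a cohomological computation gives $\deg\big((I_*\mathfrak{e})^2\big)=5$, so $\deg\zeta=9$, and divisibility of $\CH_0(F)_{\mathrm{hom}}$ lets me write $\zeta=9\mathfrak{o}_0$ with $\deg\mathfrak{o}_0=1$. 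Then $3\Psi\mathfrak{o}_0-h^{d-1}$ is $3$-torsion, hence zero for $d=4$ and removable by a torsion correction (using the surjectivity of $\Psi$ on torsion from (i)) for $d=3$; the residual $3$-torsion ambiguity is exactly the source of the non-uniqueness recorded in the statement. With $3\Psi\mathfrak{o}_0=h^{d-1}$ in hand, the relation gives $\Psi\big((I_*\mathfrak{o}_0)^2-5\mathfrak{o}_0\big)=0$, so $\epsilon:=(I_*\mathfrak{o}_0)^2-5\mathfrak{o}_0\in\ker\Psi$. By (i) there is a unique $\beta\in\ker\Psi$ with $5\beta=\epsilon$; set $\mathfrak{o}=\mathfrak{o}_0+\beta$. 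Since $\beta\in\ker\Psi$ we have $I_*\beta=0$, so $3\Psi\mathfrak{o}=h^{d-1}$ still holds, while $(I_*\mathfrak{o})^2=(I_*\mathfrak{o}_0)^2=5\mathfrak{o}_0+\epsilon=5\mathfrak{o}$, as required. Uniqueness follows by tracking the $3$-torsion for $d=3$, respectively the torsion-freeness for $d=4$, through the same computation.
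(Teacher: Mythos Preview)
Your argument is correct and follows the paper's proof essentially step for step: (i) via Roitman and the Albanese/intermediate-Jacobian identification, (ii) and (iii) by reading off the relation and invoking torsion-freeness of $\ker\Psi$, and (iv) by correcting an initial degree-$1$ class inside $\ker\Psi$ using unique divisibility. The only notable difference is that in (iv) the paper simply \emph{asserts} the existence of a class $\gamma$ with $3\Psi(\gamma)=h^{d-1}$, whereas you actually manufacture it from the relation applied to $\mathfrak a=\mathfrak b=\Psi\mathfrak e$ together with divisibility of $\CH_0(F)_{\mathrm{hom}}$ (and a torsion correction when $d=3$); conversely, the paper spells out the uniqueness step---showing that two solutions with the same $\Psi$-image differ by a $5$-torsion element of $\ker\Psi$, hence coincide, and then verifying via (ii) and (iii) that a $3$-torsion translate again satisfies $(I_*\mathfrak o)^2=5\mathfrak o$---while you only gesture at it. Filling that in would make your sketch complete, but there is no gap in the strategy.
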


\begin{proof}
We first show that $\ker(\Psi)\subset \CH_0(F)$ is torsion free and divisible (and hence uniquely divisible). When $d=3$, the homomorphism $\Psi: \CH_0(F)\rightarrow \CH_1(X)$ factors as the albanese map $\mathrm{alb}:\CH_0(F)\rightarrow \mathrm{Alb}(F)$ followed by an isomorphism $\mathrm{Alb}(F)\overset{\sim}{\longrightarrow} \CH_1(X)$; see \cite{cg}. It follows that $\ker(\Psi)$ is the Albanese kernel. A theorem of Roitman \cite{roitman} says that the Albanese map is an isormphism on the torsion subgroup. Thus we conclude that $\ker(\Psi)$ is torsion free in the cubic threefold case. When $d=4$, the whole group $\CH_0(F)$ is torsion free by Roitman. We still need to show that $\ker(\Psi)$ is divisible. Let $\tau\in \ker(\Psi)$ and let $n$ be a positive integer. Since $\tau$ is of degree zero, there exists some $\tau'\in \CH_0(F)$ such that $n\tau'=\tau$. By assumption we have $n\Psi(\tau')=0$. If $d=4$, then $\CH_1(X)$ is torsion free and hence $\Psi(\tau') =0$. In this case we have $\tau' \in \ker(\Psi)$. If $d=3$, then $\Psi(\tau')$ is an $n$-torsion element of $\CH_1(X)$. By Roitman's theorem, there exists a unique $n$-torsion element $\theta\in\CH_0(F)$ such that $\Psi(\theta) =\Psi(\tau')$. Take $\tau''=\tau'-\theta\in \ker(\Psi)$ and we have $n\tau''=\tau$. This proves (i).

If $\mathfrak{a}\in \CH_1(X)$ is of torsion and $\mathfrak{b}\in \CH_1(X)$ is of degree zero, then we have $\deg(\mathfrak{a}) = \deg(\mathfrak{b}) = 0$. It follows from Proposition \ref{prop relation} that
\[
 \Psi(\Phi(\mathfrak{a})\cdot \Phi(\mathfrak{b})) = 0.
\]
Thus $\Phi(\mathfrak{a})\cdot \Phi(\mathfrak{b})$ is a torsion element of $\ker(\Psi)$. Since $\ker(\Psi)$ is torsion free, we conclude that $\Phi(\mathfrak{a})\cdot \Phi(\mathfrak{b}) =0$, which is statement (ii).

Let $\gamma\in \CH_0(F)$ be a torsion element. Then by Porposition \ref{prop relation}, we have
\[
\Psi(I_*\gamma\cdot I_*\mathfrak{o} + 2\gamma) = \Psi(\Phi(\Psi(\gamma))\cdot\Phi(\Psi(\mathfrak{o}))) + 2\deg(\Psi(\mathfrak{o})) \Psi(\gamma) = 0.
\]
Thus $I_*\gamma\cdot I_*\mathfrak{o} + 2\gamma$ is a torsion element in $\ker(\Psi)$ and hence it is zero.

Now we pick $\gamma\in \CH_0(F)$ such that $3 \Psi(\gamma) = h^{d-1}$. Define
\[
\tau = (I_*\gamma)^2 -5\gamma.
\]
If we write $\mathfrak{a}=\Psi(\gamma)$ and apply Proposition \ref{prop relation}, we get
\[
\Psi(\tau) = \Psi(\Phi(\mathfrak{a})^2) - 5\mathfrak{a} = -4\deg(\mathfrak{a})\mathfrak{a} +3 \deg(\mathfrak{a})^2h^{d-1} - 5\mathfrak{a} = 3(h^{d-1} - 3\mathfrak{a}) =0.
\]
Let $\tau'\in \ker(\Psi)$ be the unique element such that $\tau = 5\tau'$. Let $\mathfrak{o} = \gamma+\tau'$ and note that $I_*\tau' = \Phi(\Psi(\tau'))=0$. We get
\[
 (I_*\mathfrak{o})^2 =(I_*\gamma)^2 = 5\gamma +\tau = 5(\gamma+\tau') =5\mathfrak{o}.
\]
The above argument shows that for every $\mathfrak{a}\in \CH_1(X)$ such that $3\mathfrak{a} = h^{d-1}$, there exists an element $\mathfrak{o}\in \CH_0(F)$ with $\Psi(\mathfrak{o}) = \mathfrak{a}$ and satisfies the required equations. Actually, such an $\mathfrak{o}$ lifting $\mathfrak{a}$ is unique. Indeed, if $\mathfrak{o}'$ is a different choice, then we have $\mathfrak{o}' = \mathfrak{o} + \tau$ for some $\tau\in \ker(\Psi)$. Then $I_*\mathfrak{o}' = I_*\mathfrak{o} + \Phi(\Psi(\tau)) = I_*\mathfrak{o}$ and hence
\[
0=5\mathfrak{o}' - (I_*\mathfrak{o}')^2 = 5\mathfrak{o} + 5\tau -(I_*\mathfrak{o})^2 = 5\tau.
\]
We conclude that $\tau=0$ since $\ker(\Psi)$ is torsion free. Then the uniqueness of $\mathfrak{o}$ in the case $d=4$ follows since $\CH_1(X)$ is torsion free and $\mathfrak{a}\in \CH_1(X)$ is unique. We assume that $d=3$. Let $\mathfrak{o}\in \CH_0(F)$ be an element satisfying the required equtions. Take $\mathfrak{o}'=\mathfrak{o}+\alpha$, where $\alpha\in\CH_0(F)$ is of 3-torsion. Since $\Psi(\alpha)$ is of 3-torsion and hence by (ii) we have $(I_*\alpha)^2=0$. Thus we have
\[
\big( I_*(\mathfrak{o}+\alpha) \big)^2 = (I_*\mathfrak{o})^2 + 2 I_*\alpha\cdot I_*\mathfrak{o}= 5\mathfrak{o} +2 (-2)\alpha = 5(\mathfrak{o} + \alpha) - 9\alpha = 5(\mathfrak{o}+\alpha).
\]
This shows that $\mathfrak{o}'$ also satisfies the equations.
\end{proof}

\begin{rmk}
When $d=4$, the unique element $\mathfrak{o}\in \CH_0(F)$ obtained above is the same as the canonical class obtained by Voisin \cite{voisin hk}. This class was also essential to the multiplicative decomposition obtained in \cite{shen-vial}.
\end{rmk}

\begin{thm}\label{thm generic cubic}
Let $X\subseteq\PP^{d+1}_{\C}$ be a smooth cubic hypersurface of dimension $d=3$ or $4$ and let $F$ be its variety of lines. Let $\mathfrak{o}\in F$ be an element of degree one as in (iv) of Lemma \ref{lem o}. We take
\[
\varpi_F^0 : = \mathfrak{o}\times F,\quad \varpi_F^{d-2} = \frac{1}{2}\big(5\,\mathfrak{o}\times F - p_1^*(I_*\mathfrak{o})\cdot I\big), \quad \varpi_F^{2d-4} = \delta_F - \varpi_F^0 - \varpi_F^{d-2}
\]
where all cycles are understood to be their restriction to $F\times \eta_F$ and we set $\varpi_F^j=0$ for all $j$ satisfying $1\leq j \leq 2d-5$ and $j\neq d-2$. Then $\varpi_F^i$, $0\leq i\leq 2d-4$, form a birational Chow--K\"unneth decomposition with $\Z[\frac{1}{2}]$-coefficients.
\end{thm}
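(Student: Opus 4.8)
The plan is to reduce everything to the composition algebra generated inside $\CH_0(F_K)\otimes\Z[\tfrac12]$ by the two elements
\[
 e := (\mathfrak{o}\times F)|_{F\times\eta_F},\qquad u := \big(p_1^*(I_*\mathfrak{o})\cdot I\big)|_{F\times\eta_F},
\]
so that $\varpi_F^0=e$ and $\varpi_F^{d-2}=\tfrac12(5e-u)$. Since $\varpi_F^{2d-4}$ is \emph{defined} as $\delta_F-\varpi_F^0-\varpi_F^{d-2}$, the decomposition $\delta_F=\sum_i\varpi_F^i$ holds by construction, and the idempotence of $\varpi_F^{2d-4}$ together with its orthogonality to the other two projectors is a purely formal consequence of the shape $e_3=\delta_F-e_1-e_2$ once $e_1=\varpi_F^0$ and $e_2=\varpi_F^{d-2}$ are shown to be orthogonal idempotents. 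Thus the theorem comes down to two things: the multiplication table of $e$ and $u$ under $\circ$, and the action of $e,u$ on $\bigoplus_i\HH^{i,0}(F)$ through the birational cycle class map.

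For the multiplication table I would first record that the self-correspondence $p_1^*(I_*\mathfrak{o})\cdot I$ induces on $\CH_0(F_L)$, for every $L\supset\C$, the operator $\beta\mapsto (I_*\mathfrak{o})\cdot(I_*\beta)=\Phi(\Psi\mathfrak{o})\cdot\Phi(\Psi\beta)$ by the projection formula (using that $I$ is symmetric), and that $u=(I_*\mathfrak{o})\cdot I_*(\delta_F)$ as an element of $\CH_0(F_K)$. Combining the identity $\gamma\circ\varpi=\varpi^*\gamma$ from the proof of Proposition \ref{prop algebra decomposition} (valid for arbitrary elements, not just projectors) with Proposition \ref{prop relation} over $K$ and with $(I_*\mathfrak{o})^2=5\mathfrak{o}$, $3\Psi(\mathfrak{o})=h^{d-1}$ from Lemma \ref{lem o}(iv), a direct computation gives
\[
 \Psi\big(\Phi(\Psi\mathfrak{o})\cdot\Phi(\Psi\beta)\big)=-2\,\Psi\beta+7\deg(\beta)\,\Psi\mathfrak{o},
\]
where I use $\deg\circ\Psi=\deg$ (every line has degree one). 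Feeding this back produces
\[
 e\circ e=e,\qquad e\circ u=u\circ e=5e,\qquad u\circ u=-2u+35e,
\]
all in $\CH_0(F_K)$. Hence $\varpi_F^{d-2}\circ\varpi_F^{d-2}=\tfrac14(25e-25e-25e+35e-2u)=\tfrac12(5e-u)=\varpi_F^{d-2}$ and $\varpi_F^0\circ\varpi_F^{d-2}=\varpi_F^{d-2}\circ\varpi_F^0=\tfrac12(5e-5e)=0$; the division by $2$ here is exactly what forces $\Z[\tfrac12]$-coefficients.

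For the cohomological statement I would apply the ring homomorphism $[-]$ to obtain $[u]^2=-2[u]+35[e]$ on $\bigoplus_i\End(\HH^{i,0}(F))$. The class $[e]$ is the projector onto $\HH^{0,0}(F)$, and $[u]$ acts there as the scalar $\deg(u)=5$; on every $\HH^{i,0}(F)$ with $i>0$ the factor $\mathfrak{o}\times F$ kills $\HH^{i,0}$, so $[\varpi_F^{d-2}]$ vanishes on $\HH^{0,0}(F)$. A Hodge-type count shows $[u]$ also kills $\HH^{2d-4,0}(F)$: for a top holomorphic form $\alpha$ the class $\alpha\cup[I_*\mathfrak{o}]$ would live in $\HH^{4d-8}(F)$ in Hodge type $(3d-6,d-2)$, and since $3d-6>\dim_{\C}F=2d-4$ this component vanishes for $d\geq3$. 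On $\HH^{d-2,0}(F)$ the quadratic relation forces the eigenvalues of $[u]$ into $\{0,-2\}$; I would then invoke the injectivity of the cylinder (Abel--Jacobi) homomorphism on $\HH^{d-2,0}(F)$ to conclude $[u]=-2\,\mathrm{id}$, i.e. $[\varpi_F^{d-2}]=\mathrm{id}$ there. Finally, the vanishing $h^{i,0}(F)=0$ for $i\notin\{0,d-2,2d-4\}$ (a surface when $d=3$; a hyperk\"ahler fourfold with $h^{1,0}=h^{3,0}=0$ when $d=4$) guarantees that $[\varpi_F^{2d-4}]=\mathrm{id}-[\varpi_F^0]-[\varpi_F^{d-2}]$ is precisely the projector onto $\HH^{2d-4,0}(F)$.

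The main obstacle is the cohomological input that $[u]$ acts as $-2$ rather than $0$ on $\HH^{d-2,0}(F)$. The quadratic relation and the Hodge-type bookkeeping are formal, but excluding a kernel of the incidence action on the transcendental holomorphic forms is exactly the statement that the cylinder homomorphism is an isomorphism onto the relevant Hodge piece of $X$ (the Abel--Jacobi isomorphism $\HH^1(F)\cong\HH^3(X)$ when $d=3$, and $\HH^4(X)_{\mathrm{prim}}\cong\HH^2(F)_{\mathrm{prim}}$ when $d=4$). By contrast, the composition table is a mechanical consequence of Proposition \ref{prop relation} and Lemma \ref{lem o}, and the passage from the two explicit projectors to the third is purely algebraic.
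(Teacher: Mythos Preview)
Your proposal follows essentially the same route as the paper: reduce to the two generators $e=\mathfrak{o}\times F$ and $u=p_1^*(I_*\mathfrak{o})\cdot I$, derive the multiplication table $e\circ e=e$, $e\circ u=u\circ e=5e$, $u\circ u=-2u+35e$ from Proposition~\ref{prop relation} and Lemma~\ref{lem o}(iv), and then verify the action on $\HH^{i,0}(F)$. The paper carries out the same three composition computations (phrased as actions on a test class $\tau$ with $\tau=\delta_F$ at the end) and arrives at the same idempotent calculation $\varpi_F^{d-2}\circ\varpi_F^{d-2}=\tfrac14(25e-50e+35e-2u)=\varpi_F^{d-2}$. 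Your Hodge-type vanishing for $\HH^{2d-4,0}$ (that $[S_l]\cup\alpha$ lands in bidegree $(3d-6,d-2)$ with $3d-6>2d-4$) is a clean variant of the paper's observation that the action of $\varpi_F^{d-2}$ factors through the cohomology of the $(d-2)$-fold $S_l$.

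The one place where your sketch is looser than the paper is the action on $\HH^{d-2,0}(F)$. You correctly note that the quadratic relation forces the eigenvalues of $[u]$ into $\{0,-2\}$, but the assertion that ruling out $0$ ``is exactly the statement that the cylinder homomorphism is an isomorphism'' is not accurate as stated. What is needed is that $\alpha\mapsto\Psi([S_l]\cup\alpha)$ is injective on $\HH^{d-2,0}(F)$; by the adjointness of $\Phi$ and $\Psi$ this unwinds to the non-degeneracy of the pairing $(\alpha,\beta)\mapsto\int_F [S_l]\cup\alpha\cup\beta$ on $\HH^{d-2,0}(F)\times\HH^{0,d-2}(F)$, and the isomorphism $\Phi:\HH^d(X)\to\HH^{d-2}(F)$ alone does not furnish this --- it says nothing about how cup product with $[S_l]$ behaves. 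The paper bypasses the eigenvalue dichotomy altogether and computes directly: writing $\alpha=\Phi(\omega)$, it invokes the explicit intersection identity
\[
S_l\cdot\Phi(\tau)\cdot\Phi(\tau')=-2\langle\tau,\tau'\rangle_X,\qquad\tau,\tau'\in\HH^d(X)_{\mathrm{tr}},
\]
to get $\Psi(S_l\cdot\Phi(\omega))=-2\omega$ and hence $[\varpi_F^{d-2}]_*\alpha=-\tfrac12\Phi(-2\omega)=\alpha$. This extra numerical input (the factor $-2$, not merely nonzero) is the missing ingredient in your reduction.
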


\begin{proof}
It is clear that $\varpi_F^0\circ\varpi_F^0 = \varpi_F^0$. To compute the other compositions, we make the following observation. Let $\gamma\in \CH_0(F\times \eta_F)$ and let $\gamma_K\in \CH_0(F_K\times_K \eta_{F_K})$ be the base change of $\gamma$ to a field $K\supset \C$. When we take $K=\C(F)$, then one easily checks
\[
\big( \gamma\otimes \C(F)\big)^*\delta_F = \gamma.
\]
To make the notation simpler, we simply write $\gamma^*\delta_F = \gamma$. Now for any $\tau\in \CH_0(F_K)$, we have
\begin{align*}
 \Big(\varpi_F^0\circ p_1^*(I_*\mathfrak{o})\cdot I \Big)^*\tau
  & = (\mathfrak{o}\times F)^* \big( p_1^*(I_*\mathfrak{o})\cdot I \big)^* \tau\\
  & = (\mathfrak{o}\times F)^*\big( I_*\mathfrak{o}\cdot I_*\tau\big)\\
  & = \deg\big( I_*\mathfrak{o}\cdot I_*\tau\big) \mathfrak{o}\\
  & = 5\deg(\tau) \mathfrak{o}\\
  & = 5(\varpi_F^0)^*\tau.
\end{align*}
Similarly, we have
\[
\big(p_1^*(I_*\mathfrak{o})\cdot I \circ \varpi_F^0\big)^*\tau =  \deg(\tau) (I_*\mathfrak{o})^2 = 5\deg(\tau)\mathfrak{o} = 5(\varpi_F^0)^*\tau.
\]
In particular, by taking $K=\C(F)$ and $\tau = \delta_F$, we have
\[
 \varpi_F^0\circ (p_1^*(I_*\mathfrak{o})\cdot I) =  (p_1^*(I_*\mathfrak{o})\cdot I)\circ \varpi_F^0 = 5 \varpi_F^0.
\]
From this, we easily get
\[
\varpi_F^0\circ \varpi_F^{d-2} = \varpi_F^{d-2}\circ\varpi_F^0 = 0.
\]
For any element $\tau\in\CH_0(F_K)$, we have
\begin{align*}
\Big( (p_1^*(I_*\mathfrak{o})\cdot I)\circ (p_1^*(I_*\mathfrak{o})\cdot I) \Big)^* \tau & = I_*\mathfrak{o} \cdot I_*\Big( I_*\mathfrak{o}\cdot I_*\tau\Big)\\
 & = I_*\mathfrak{o} \cdot \Phi \Psi(\Phi(\mathfrak{a})\cdot \Phi(\mathfrak{b})),\qquad \mathfrak{a} = \Psi(\mathfrak{o}),\,\, \mathfrak{b} = \Psi(\tau)\\
 & = I_*\mathfrak{o} \cdot \Phi\Big(-2\deg(\mathfrak{b})\mathfrak{a} - 2\deg(\mathfrak{a})\mathfrak{b} +3\deg(\mathfrak{a}\deg(\mathfrak{b})h^{d-1}) \Big)\\
 & = I_*\mathfrak{o}\cdot \Big( -2\deg(\tau)I_*\mathfrak{o} -2 I_*\tau + 9\deg(\tau) I_*\mathfrak{o}\Big)\\
 & = 35 \deg(\tau)\mathfrak{o} -2 I_*\mathfrak{o}\cdot I_*\tau\\
 & = 35 (\varpi_F^0)^*\tau - 2 (p_1^*(I_*\mathfrak{o})\cdot I)^*\tau.
\end{align*}
From this we get
\[
(p_1^*(I_*\mathfrak{o})\cdot I)\circ (p_1^*(I_*\mathfrak{o})\cdot I) = 35 \varpi_F^0 - 2 p_1^*(I_*\mathfrak{o})\cdot I.
\]
From this equation we get
\[
\varpi_F^{d-2}\circ\varpi_F^{d-2} = \frac{1}{4}(25\varpi_F^0 -2\cdot 5\cdot 5 \varpi_F^{0} + 35\varpi_F^0 -2 p_1^*(I_*\mathfrak{o})\cdot I) = \varpi_F^{d-2}.
\]
Thus the $\varpi_F^i$ are all projectors. 

We need to show that the action of $\varpi_F^i$ on cohomology is the expected one. Now it is clear that $[\varpi_F^0]$ is the projector onto $\HH^0(F)$. We only need to show that the class $[\varpi_F^{d-2}]$ is the expected one. Note that the cohomology class of $\varpi_F^{d-2}$ is given by
\[
[\varpi_F^{d-2}] = \frac{1}{2}\Big( 5 [\mathfrak{o} \times F] - [p_1^*S_l\cdot I]\Big)
\]
where $l\subset X$ is a general lines and $S_l\subset F$ is the subvariety of all lines meeting the given line $l$. As a consequence, the cohomological action of $\varpi_F^{d-2}$ factors through the cohomology of $S_l$. Thus we see that $\varpi_F^{d-2}$ acts trivially on $\HH^{2d-4,0}(F)$. It is also clear that the action of $\varpi_F^{d-2}$ on $\HH^{0,0}(F)$ is zero. Let $\alpha\in \HH^{d-2,0}(F)$. Since $\Phi=p_*q^*: \HH^d(X)\longrightarrow  \HH^{d-2}(F)$ is an isomorphism; see \cite{cg, bd}. We see that there exists a unique $\omega\in\HH^{d-1,1}(X)$ such that $\alpha = \Phi(\omega)$. Thus we have
\[
[\varpi_F^{d-2}]_*\alpha =-\frac{1}{2} \Phi\big( \Psi(S_l\cdot\Phi(\omega)) \big) = -\frac{1}{2}\Phi(-2\omega) = \alpha.
\]
Here we used the fact that $\Psi(S_l\cdot\Phi(\omega)) = -2\omega$, which is a direct consequence of the formula
\[
 S_l\cdot \Phi(\tau)\cdot\Phi(\tau') = -2\langle \tau,\tau'\rangle_X,\quad\text{for all }\tau,\tau'\in\HH^{d}(X)_{\mathrm{tr}}.
\]
We conclude the proof since $\HH^{j,0}(F)=0$ for all $j\neq 0,d-2,2d-4$.
\end{proof}

\begin{rmk}
When $d=4$, the induced decomposition of $\CH_0(F)$ is the one obtained in \cite{shen-vial}. When $d=3$, we get a decomposition
\[
\CH_0(F) = \Z\mathfrak{o}_F \oplus \CH_0(F)_1 \oplus \CH_0(F)_2,
\]
where $\CH_0(F)_1$ and $\CH_0(F)_2$ are independant of the choice of $\mathfrak{o}_F$.
\end{rmk}

\section{Stably rational cubic threefolds and cubic fourfolds}

In this section we show that a birational Chow--K\"unneth decomposition exists on the variety of lines on a stably rational cubic threefold or a stably rational cubic fourfold.

\begin{thm}\label{thm rational cubic}
Let $X\subset \PP^{d+1}_{\C}$ be a smooth cubic hypersurface of dimension $d=3$ or $4$. Let $F$ be the variety of lines on $X$. Assume that $X$ has a cohomological decomposition of the diagonal. Then $F$ admits a birational Chow--K\"unneth decomposition.
\end{thm}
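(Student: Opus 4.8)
The plan is to bootstrap from Theorem~\ref{thm generic cubic}, which already produces a birational Chow--K\"unneth decomposition of $F$ with $\Z[\tfrac12]$-coefficients. The projectors $\varpi_F^0=\mathfrak{o}\times F$ and $\varpi_F^{2d-4}=\delta_F-\varpi_F^0-\varpi_F^{d-2}$ are integral as soon as the middle one is, so the whole problem reduces to replacing $\varpi_F^{d-2}=\tfrac12\bigl(5\,\mathfrak{o}\times F-p_1^*(I_*\mathfrak{o})\cdot I\bigr)$ by an integral $0$-cycle over $K=\C(F)$ lifting the same cohomological projector $\varpi^{d-2}_{F,\mathrm{hom}}$. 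First I would isolate where the denominator comes from: the integral correspondence $p_1^*(I_*\mathfrak{o})\cdot I$ acts on $\HH^{d-2,0}(F)$ as $-2$, because of the relation $S_l\cdot\Phi(\tau)\cdot\Phi(\tau')=-2\langle\tau,\tau'\rangle_X$; equivalently, the Fano isomorphism $\Phi\colon\HH^d(X)\xrightarrow{\sim}\HH^{d-2}(F)$ is self-dual only up to the factor $2$, so the cycles furnished by Proposition~\ref{prop relation} realise the inverse of $\Phi$ only after $2$ is inverted. The hypothesis is exactly what supplies an integral substitute.

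Next I would unwind the hypothesis: a cohomological decomposition of the diagonal gives $[\Delta_X]=[X\times x]+[\Gamma]$ in $\HH^{2d}(X\times X,\Z)$ with $\Gamma$ supported on $D\times X$ for a divisor $D\subsetneq X$. Since $[X\times x]_*=0$ on $\HH^d(X)$, we obtain $[\Gamma]_*=\mathrm{id}$ on $\HH^d(X)$: an integral cohomological identity in which the identity on the transcendental part of $\HH^d(X)$ is realised by a cycle factoring through the divisor $D$. Transporting this through the universal family of lines $P$ and the maps $\Phi,\Psi$, I would manufacture an integral correspondence $\Theta\in\CH_{2d-4}(F\times F)$ and set $\varpi_F^{d-2}:=\Theta|_{F\times\eta_F}\in\CH_0(F_K)$, arranged so that $[\Theta]=\varpi^{d-2}_{F,\mathrm{hom}}$. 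The crucial gain is that $\Theta$ encodes the two-sided integral identity $[\Gamma]_*=\mathrm{id}$ rather than the self-pairing of $S_l$, so it meets $\HH^{d-2,0}(F)$ with coefficient $1$; the support of $\Gamma$ on $D\times X$, together with the support estimates of Lemma~\ref{lem support lemma}, forces $\Theta$ to annihilate $\HH^{0,0}(F)$ and $\HH^{2d-4,0}(F)$, as required of a middle projector.

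I would then verify that these integral cycles genuinely form a decomposition. Idempotency $\varpi_F^{d-2}\circ\varpi_F^{d-2}=\varpi_F^{d-2}$ and orthogonality $\varpi_F^{d-2}\circ\varpi_F^0=\varpi_F^0\circ\varpi_F^{d-2}=0$ are checked exactly as in the proof of Theorem~\ref{thm generic cubic}, by computing the induced maps on $\CH_0(F_L)$ through Proposition~\ref{prop relation} and the identities of Lemma~\ref{lem o}; the only difference is that, with the integral $\Theta$ in place of the $\tfrac12$-cycle, these computations now close up over $\Z$. Setting $\varpi_F^{2d-4}:=\delta_F-\varpi_F^0-\varpi_F^{d-2}$ then yields the integral birational Chow--K\"unneth decomposition.

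The hard part is the construction of $\Theta$ in the second step: producing, out of the divisor-supported cycle $\Gamma$, an integral correspondence on $F$ that inverts $\Phi$ on $\HH^{d-2,0}(F)$ with coefficient $1$ instead of $2$. This is the entire point at which the cohomological decomposition of the diagonal is used, and it is what separates the present integral statement from the $\Z[\tfrac12]$-statement of Theorem~\ref{thm generic cubic}. I should add that the essential case is $d=4$, where $\HH^{2,0}(F)\cong\HH^{3,1}(X)$ is one-dimensional and the middle projector is genuinely non-trivial; for $d=3$ the hypothesis is not met by any smooth cubic threefold, since the non-triviality of the intermediate Jacobian (Clemens--Griffiths) obstructs even a cohomological decomposition of the diagonal, so the statement there is vacuous.
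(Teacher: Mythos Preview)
Your outline has the right shape---build an integral middle projector from the hypothesis and then check it is an idempotent lifting $\varpi^{d-2}_{F,\mathrm{hom}}$---but three genuine gaps separate it from a proof.

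\textbf{Construction of $\Theta$ and the lift to Chow.} You take only the raw cohomological equation $[\Delta_X]=[X\times x]+[\Gamma]$ with $\Gamma$ supported on $D\times X$ and propose to ``transport'' it through $\Phi,\Psi$. Two things are missing. First, the paper does not work with $\Gamma$ directly: it invokes \cite[Theorem~1.3]{inthdg} to refine the decomposition so that the essential part of $\Delta_X$ is a sum $\sum_i\Gamma_i\circ\sigma_i\circ{}^t\Gamma_i$ with each $\Gamma_i=P|_{Z_i}$ coming from a $(d-2)$-dimensional variety $Z_i\to F$. This factorisation through low-dimensional varieties over $F$ is what makes the transport to $F\times F$ possible and what eventually kills the action on $\HH^{2d-4,0}(F)$; a bare divisor-supported $\Gamma$ on $X\times X$ does not obviously yield a correspondence on $F\times F$ with the required support properties. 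Second, and more seriously, all of this is only a \emph{cohomological} identity. The paper upgrades it to algebraic equivalence via \cite[Proposition~5.1]{inthdg} and then to an honest equality in $\CH_d(X\times X)$ by the nilpotence theorems of Voevodsky and Voisin (taking a high self-composition power). You never address this passage from cohomology to Chow, and without it there is no cycle $\Theta$ to speak of.

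\textbf{Idempotency.} You say the verification proceeds ``exactly as in the proof of Theorem~\ref{thm generic cubic}, by computing the induced maps on $\CH_0(F_L)$ through Proposition~\ref{prop relation}''. That argument worked there because $\varpi_F^{d-2}$ had an explicit formula in terms of $I$ and $\mathfrak{o}$. Here $\Theta$ is produced nonconstructively (via nilpotence), so no such computation is available. The paper instead exploits the structural shape $\Pi_F^{d-2}=\Phi\circ\Psi\circ\theta$ together with the Chow identity $\Psi\circ\theta\circ\Phi=\Delta_X-(\text{decomposable terms})$: composing $\Pi_F^{d-2}$ with itself reproduces $\Pi_F^{d-2}$ up to a term of the form $\alpha\times\eta_F$, which is then subtracted off. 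Proposition~\ref{prop relation} and Lemma~\ref{lem o} play no role in this proof.

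\textbf{The $d=3$ case.} Your final remark that the statement is vacuous for cubic threefolds because Clemens--Griffiths ``obstructs even a cohomological decomposition of the diagonal'' is not correct. Clemens--Griffiths shows the intermediate Jacobian is not a product of Jacobians, hence $X$ is irrational; the existence of a \emph{cohomological} decomposition of the diagonal is a strictly weaker condition (related, for threefolds, to algebraicity of the minimal class on $J(X)$) and is not known to fail for all smooth cubic threefolds. The paper accordingly treats $d=3$ and $d=4$ uniformly.
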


\begin{proof}
In \cite[Theorem 1.3]{inthdg}, we showed that there exist finitely many smooth projective varieties $Z_i\rightarrow F$ of dimension $d-2$, correspondences $\Gamma_i = P|_{Z_i} \in \CH_{d-1}(Z_i\times X)$ and symmetric self-corresponcences $\sigma_i\in \CH_{d-2}(Z_i\times Z_i)$ such that
\begin{equation}
\Delta_X = \sum \Gamma_i\circ\sigma_i\circ {}^t\Gamma_i + a (h\otimes l + l \otimes h) + \Xi + (x\times X + X\times x),\quad\text{in }\HH^{2d}(X\times X,\Z),
\end{equation}
for some $a\in\Z$. The cycle $\Xi$ is symmetric and non-zero only when $d=4$, in which case $\Xi = \sum S_i\otimes S_j$ with $S_i,S_j\in \CH_2(X)$ being $2$-cycles.

By \cite[Proposition 5.1]{inthdg}, we may enlarge the collection $(\Z_i,\Gamma_i,\sigma_i)$ and assume that
\[
\Delta_X \sim_{\mathrm{alg}} \sum \Gamma_i\circ\sigma_i\circ {}^t\Gamma_i + a (h\otimes l + l \otimes h) +\Xi+ (x\times X + X\times x).
\]
By the main results of \cite{v nil, voisin nil}, we see that
\begin{equation}\label{eq N power}
 \left( \Delta_X - \sum \Gamma_i\circ {}^t\Gamma_i - a (h\otimes l + l \otimes h) -\Xi- (x\times X + X\times x)\right)^{\circ N} =0,\quad \text{in }\CH_d(X\times X),
\end{equation}
for some sufficiently large $N$. To simplify the expansion of the above expression, we note that
\begin{align*}
(\Gamma_i\circ\sigma_i\circ {}^t\Gamma_i)\circ (D\otimes C) & = D\otimes C',\\
(\Gamma_i\circ\sigma_i\circ {}^t\Gamma_i)\circ (C\otimes D) & = C\otimes D',\\
(D\otimes C)\circ (\Gamma_i\circ\sigma_i\circ {}^t\Gamma_i) & = D'\otimes C,\\
(C\otimes D)\circ (\Gamma_i\circ\sigma_i\circ {}^t\Gamma_i) & = C'\otimes D,
\end{align*}
where $D,D'\in \CH^1(X)$ and $C,C'\in \CH_1(X)$. When $d=4$, we also have
\[
 (\Gamma_i\circ\sigma_i\circ {}^t\Gamma_i)\circ (S_1\otimes S_2)  = S_1\otimes S'_2\quad \text{and}\quad (S_1\otimes S_2)\circ(\Gamma_i\circ\sigma_i\circ {}^t\Gamma_i) = S'_1\otimes S_2,
\] 
 where $S_1,S_2,S'_1,S'_2\in \CH_2(X)$. Then the terms in the expansion of the equation \eqref{eq N power} consists of the following types.

\textit{Type 1}: $\Gamma\circ\sigma \circ{}^t\Gamma$. Here $\sigma \in\CH_{d-2}(T\times T)$ is a symmetric self-correspondence on a smooth projective variety $T\rightarrow F$ of dimension $d-2$ and $\Gamma=P|_{T}\in \CH_{d-1}(T\times X)$ is the restriction of the universal line.

\textit{Type 2}: $\Gamma\circ\sigma\circ {}^t\Gamma' + \Gamma'\circ{}^t\sigma\circ{}^t\Gamma$. Here $\sigma \in\CH_{d-2}(T'\times T)$ is a correspondence between $(d-2)$-dimensional smooth projective varieties $T\rightarrow F$ and $T'\rightarrow F$ and $\Gamma = P|_{T\times X}\in \CH_{d-1}(X)$ and $\Gamma'=P_{T'\times X}\in \CH_{d-1}(T'\times X)$ are the restrictions of the universal line.

\textit{Type 3}: $D\otimes C + C\otimes D$. Here $D\in \CH^1(X)$ is a divisor class and $C\in \CH_1(X)$ is a 1-cycle.

\textit{Type 4}: $x\times X + X\times x$. This term is unique and appears with coefficient 1.

\textit{Type 5}: (only for $d=4$) $S\otimes S' + S'\otimes S$, where $S,S'\in \CH_2(X)$.

As a consequence we have
\begin{equation}
\Delta_X = \sum_{\text{type 1}} \Gamma\circ\sigma\circ{}^t\Gamma + \sum_{\text{type 2}} (\Gamma\circ\sigma\circ{}^t\Gamma' + \Gamma'\circ {}^t\sigma\circ{}^t\Gamma) + \sum_{\text{type 3}} (D\otimes C + C\otimes D) + (x\times X + X\times x) +\Xi
\end{equation}
in $\CH_d(X\times X)$, where $\Xi = \sum S_i\otimes S_j$.

For each $T$, we use $f:T\rightarrow F$ to denote the morphism of $T$ to $F$. We define
\[
\theta : = \sum_{\text{type 1}}f_*\circ\sigma \circ f^* + \sum_{\text{type 2}} (f_*\circ\sigma\circ f'^* + f'_*\circ{}^t\sigma\circ f^*), \quad \text{in }\CH_{d-2}(F\times F).
\]
It fllows that
\begin{align*}
\Psi\circ\theta \circ \Phi & = \sum_{\text{type 1}} \Gamma\circ\sigma\circ{}^t\Gamma + \sum_{\text{type 2}} (\Gamma\circ\sigma\circ{}^t\Gamma' + \Gamma'\circ{}^t\sigma\circ{}^t\Gamma) \\
 & = \Delta_X -\sum_{\text{type 3}} (D\otimes C + C\otimes D) - (x\times X + X\times x) -\Xi.
\end{align*}
Let
\[
\Pi^{d-2}_F = \Phi\circ\Psi\circ\theta
\]
and we have
\begin{align*}
\Pi^{d-2}_F\circ \Pi^{d-2}_F & = \Phi\circ\Psi\circ\theta\circ\Phi\circ\Psi\circ\theta\\
 & = \Phi\circ \left(\Delta_X -\sum_{\text{type 3}} (D\otimes C + C\otimes D) - (x\times X + X\times x) -\Xi \right)\circ \Psi \circ \theta\\
 & = \Pi_F^{d-2} -\Phi\circ\left( \sum_{\text{type 3}} (D\otimes C + C\otimes D) + (x\times X + X\times x) + \Xi \right)\circ\Psi\circ\theta
\end{align*}
Now we restrict the above equation to $F\times \eta_F$ and we get
\begin{equation}\label{eq varpi1}
\varpi'^{d-2}_F \circ \varpi'^{d-2}_F = \varpi'^{d-2}_F + \alpha\otimes \eta_F,\quad\text{in }\CH_0(F_{\C(F)}),
\end{equation}
where $\varpi'^{d-2}_F$ is the restriction of $\Pi_F^{d-2}$ to $F\times \eta_F$ and $\alpha\in \CH_0(F)$. Note that
\[
\varpi'^{d-2}_F\circ (\mathfrak{o}_F\times\eta_F) = r \mathfrak{o}_F\times \eta_F \quad \text{and}\quad (\mathfrak{o}_F \times \eta_F) \circ \varpi'^{d-2}_F = (\varpi'^{d-2}_F)^*\mathfrak{o}_F \times \eta_F
\]
where $r=\deg((\varpi'^{d-2}_F)^*\mathfrak{o}_F)$. Take $\gamma=(\varpi'^{d-2}_F)^*\mathfrak{o}_F\in \CH_0(F)$. We modify $\varpi'^{d-2}_F$ a little by taking
\[
\varpi^{d-2}_F = \varpi'^{d-2}_F - \gamma\otimes \eta_F.
\]
Then $\varpi^{d-2}_F$ satisfies the following equations
\begin{align}
\label{eq alpha'} \varpi^{d-2}_F\circ\varpi_F^{d-2} &= \varpi_F^{d-2} + \alpha'\times \eta_F,\quad\alpha'\in \CH_0(F)\\
 \label{eq varpi2}\varpi^{d-2}_F\circ \mathfrak{o}_F\times\eta_F & = \mathfrak{o}_F\times \eta_F\circ\varpi^{d-2}_F =0.
\end{align}
We use equation \eqref{eq alpha'} to pull back the class $\mathfrak{o}_F$ and use the fact that $(\varpi^{d-2}_F)^*\mathfrak{o}_F=0$, we get $\alpha'=0$. Thus $\varpi^{d-2}_F$ is projector whose action on $\HH^0(F)$ is zero by construction. We check that its action on $\HH^{d-2,0}(F)$ is identity. This follows from the following computation. Let $u\in \HH^{d-2,0}(F)$, then we have $u =\Phi(v)$ for some $v\in \HH^{d-1,1}(X)$. Thus
\begin{align*}
(\varpi_F^{d-2})_*u &= \Phi\circ\Psi\circ\theta\circ\Phi (v) \\
 & = \Phi \circ\left( \Delta_X -\sum_{\text{type 3}} (D\otimes C + C\otimes D) - (x\times X + X\times x) -\Xi\right)_*v\\
 & = \Phi((\Delta_X)_*v)\\
 & = u.
\end{align*}
Since $\varpi_F^{d-2}$ factors through varieties of dimension $d-2$ (namely the $T$'s), we see that its action on $\HH^{2d-4,0}(F)$ is trivial. Hence we see that
\[
\varpi_F^0 = \mathfrak{o}\times \eta_F, \quad \varpi_F^{d-2}\quad \text{and}\quad \varpi^{2d-4}_F=\delta_F- \varpi_F^0-\varpi_F^{d-2}
\]
give a birational Chow--K\"unneth decomposition on $F$.
\end{proof}

\begin{rmk}
We combine the above argument together with the results of \cite{inthdg} and get the following. If $X$ admits a cohomological decomposition of the diagonal, then there exists a symmetric cycle $\theta\in \CH_{d-2}(F\times F)$, such that
\begin{enumerate}
\item The following numerical propterty holds
\[
 \theta \cdot \alpha\cdot \beta = -\mathfrak{B}(\alpha,\beta),\quad \text{for all }\alpha,\beta\in \HH^{d-2}(F)_{\mathrm{tr}},
\]
where $\mathfrak{B}$ is the natural bilinear form on $\HH^{d-2}(F)$ (the principal polarization when $d=3$ and the Beauville--Bogomolov form when $d=4$).
\item The correspondence $I\circ \theta$ is essentially a birational projector $\varpi_F^{d-2}$, where $I=\Phi\circ \Psi$ is the incidence correspondence.
\end{enumerate}
\end{rmk}

\end{document}